\def\umono{\ar@{_{(}->}[u]}
\def\uumono{\ar@{_{(}->}[uu]}
\def\lmono{\ar@{_{(}->}[l]}
\def\llmono{\ar@{_{(}->}[ll]}
\newcommand{\Z}{{\mathbb Z}}
\newcommand{\C}{{\mathbb C}}
\newcommand{\F}{{\mathbb F}}
\newcommand{\pcom}{_{p}^{\wedge}}
 \renewcommand{\P}[1]{\mathcal{P}^{#1}}
\newcommand{\map}{\operatorname{map}\nolimits}
                     \newcommand{\Hom}{\operatorname{Hom}\nolimits}
\newcommand{\Aut}{\operatorname{Aut}\nolimits}
\newcommand{\aut}{\operatorname{aut}\nolimits}
\renewcommand{\ker}{\operatorname{Ker}\nolimits}
\newcommand{\A}{\ifmmode{\mathcal{A}}\else${\mathcal{A}}$\fi}
\newcommand{\K}{\ifmmode{\mathcal{K}}\else${\mathcal{K}}$\fi}
\newcommand{\U}{\ifmmode{\mathcal{U}}\else${\mathcal{U}}$\fi}
\newcommand{\T}{\ifmmode{\mathcal{T}}\else${\mathcal{T}}$\fi}
\newcommand{\FF}{\ifmmode{\mathcal{F}}\else${\mathcal{F}}$\fi}
\newcommand{\LL}{\ifmmode{\mathcal{L}}\else${\mathcal{L}}$\fi}
\newtheorem{theorem}{Theorem}[section]
\newtheorem{proposition}[theorem]{Proposition}
\newtheorem{corollary}[theorem]{Corollary}
\newtheorem{lemma}[theorem]{Lemma}
\newtheorem{definition}[theorem]{Definition}
\newtheorem{remark}[theorem]{Remark}
\newtheorem{example}[theorem]{Example}
\title[Noetherian loop spaces] {Noetherian loop spaces}
\author{Nat\`{a}lia Castellana}
\author{Juan A. Crespo}
\author{J\'er\^{o}me Scherer${}^*$}
\thanks{*: corresponding author. The first and third  authors are partially supported by FEDER/MEC grant
MTM2007-61545. The second author is partially supported by FEDER/MEC
grant SEJ2007-67135. The third author has benefitted from the LMS
scheme 2 grant number 2618.}
\subjclass[2000]{Primary 55P45; Secondary 13D03, 55S10, 55T20}
\begin{document}


\begin{abstract}
The class of loop spaces whose mod $p$ cohomology is Noetherian is
much larger than the class of $p$-compact groups (for which the mod
$p$ cohomology is required to be finite). It contains Eilenberg-Mac
Lane spaces such as $\C P^\infty$ and $3$-connected covers of
compact Lie groups. We study the cohomology of the classifying space
$BX$ of such an object and prove it is as small as expected, that
is, comparable to that of $B\C P^\infty$. We also show that $BX$
differs basically from the classifying space of a $p$-compact group
in a single homotopy group. This applies in particular to
$4$-connected covers of classifying spaces of Lie groups and sheds
new light on how the cohomology of such an object looks like.
\end{abstract}


\maketitle


\section*{Introduction}
\label{sec intro}
From the point of view of homotopy theory compact Lie groups are
finite loop spaces, i.e. triples $(X, BX, e)$ where $X$ is a finite
complex, and $e: X \rightarrow \Omega BX$ is a homotopy equivalence.
Most of their geometric features are captured $p$-locally in
homotopy theory, where $p$ is any prime, as shown by Dwyer and
Wilkerson in~\cite{MR1274096}. They introduced the notion of
$p$-compact group, replacing the finiteness condition by a
cohomological one, namely that $H^*(X; \F_p)$ must be finite, and
requiring the additional property that $BX$ be local with respect to
mod $p$ homology, or equivalently $p$-complete, \cite{MR51:1825}. It
is the ``classifying space" $BX$ which carries all of the
information about the loop space. Amazingly enough, apart from
compact Lie groups, there are only a few families of exotic
$p$-compact groups and they have been recently completely classified
by Andersen, Grodal, M{\o}ller, and Viruel, see
\cite{classificationp} for the odd prime case and
\cite{classification2}, \cite{MR2314074}, \cite{MR2338539} for the
prime~$2$ (the only exotic $2$-compact group is basically the space
$DI(4)$ constructed by Dwyer and Wilkerson, \cite{MR1161306}).

If one aims at an understanding not only of finite loop spaces, but
larger ones, the next natural step to take is to relax the
cohomological finiteness condition. We define thus a
\emph{$p$-Noetherian group} to be a loop space $(X, BX, e)$ where
$BX$ is $p$-complete and $H^*(X; \F_p)$ is a finitely generated
(Noetherian) $\F_p$-algebra. Relying on Bousfield localization
techniques, \cite{B2}, and Miller's solution to the Sullivan
conjecture, \cite{Miller}, more precisely on Lannes' $T$-functor
technology, \cite{MR93j:55019}, we describe the structure of
$p$-Noetherian groups and their relation to $p$-compact groups. We
compute qualitatively the cohomology of the classifying space~$BX$
and obtain new general results when $X$ is the $3$-connected cover
of a compact Lie group, see Corollary~\ref{cor 4connectedcover}.

Let us be more precise. In the case of $p$-compact groups, i.e.
when the mod $p$ cohomology of the loop space $H^*(X; \F_p)$ is
finite, Dwyer and Wilkerson's main theorem in \cite{MR1274096}
shows that there are severe restrictions on the cohomology of the
classifying space: $H^*(BX;\F_p)$ is always a finitely generated
(Noetherian) $\F_p$-algebra. Likewise the cohomology of the
classifying space of a $p$-Noetherian group cannot be arbitrarily
large.

\medskip
\noindent{\bf Theorem~\ref{theorem BX and Ap}}
{\em Let $(X, BX, e)$ be a $p$-Noetherian group. Then $H^*(BX;
\F_p)$ is finitely generated as an algebra over the Steenrod
algebra.}
\medskip

This information is not optimal as it does not permit to decide for
example when an Eilenberg-Mac Lane space of type $K(\Z/p, m)$ is a
$p$-Noetherian group. The mod $p$ cohomology of any of them is
finitely generated as an algebra over~$\A_p$, but the only
classifying spaces of a $p$-Noetherian group are $K(\Z/p, 1)$ and
$K(\Z/p, 2)$. Schwartz's Krull filtration of the category $\U$ of
unstable modules is an established and convenient tool to measure
how large an unstable algebra is, \cite{MR95d:55017}. The Krull
filtration $\U_0\subset\U_1\subset\dots$ is defined inductively,
starting with the full subcategory $\U_0$ of $\U$ of locally finite
unstable modules (the span of every element under the action of the
Steenrod algebra is finite). In fact the cohomology $H^*(X; \F_p)$
is locally finite if and only if the space $X$ is $B\Z/p$-local,
i.e. the evaluation $\map(B\Z/p, X) \rightarrow X$ is a weak
equivalence (\cite[Th\'eor\`eme 0.14]{MR827370}).

There are many $B\Z/p$-local spaces, but there are none for which
the cohomology lies in higher stages of the Krull filtration. This
is the statement of Kuhn's non-realizability
conjecture~\cite{MR96i:55027}, which has been settled by Schwartz
in~\cite{MR99j:55019} and~\cite{MR2002k:55043}, and proved in its
full generality by Dehon and Gaudens in~\cite{MR1997324}. Thus the
cohomology of a space lies in $\U_0$ or it does not lie in any
$\U_n$. The cohomology $H^*(K(\Z/p, m); \F_p)$ for example does
not lie in any $\U_n$, but the quotient module of indecomposable
elements $QH^*(K(\Z/p, m); \F_p)$ lies in $\U_{m-1}$ for any $m
\geq 1$ (\cite[Example 2.2]{MR2264802}).

It was observed in \cite[Lemma 7.1]{MR2264802} that if $H^*(BX;
\F_p)$ is finitely generated as an algebra over~$\A_p$, then
$QH^*(BX; \F_p)$ must be finitely generated as a module
over~$\A_p$, and hence lies in $\U_k$ for some~$k$. We remark that
the condition that $H^*(X; \F_p)$ be a Noetherian $\F_p$-algebra
is equivalent to saying that $H^*(X;\F_p)$ is finitely generated
as an algebra over~$\A_p$ and that the unstable module
$QH^*(X;\F_p)$ of indecomposable elements lies in $\U _0$. Our
second result shows that the cohomology of the classifying space
of a $p$-Noetherian group is as small as expected in terms of the
Krull filtration.

\medskip

\noindent{\bf Theorem~\ref{thm TQforpNoetherian}}
{\em Let $(X, BX, e)$ be a $p$-Noetherian group. Then  $QH^*(BX;
\F_p)$ belongs to $\U_1$.}
\medskip

The arguments to prove these results are the following. We start
in Section~\ref{sec pNoetherian} with the study of the structure
of $p$-Noetherian groups. The most basic examples of
$p$-Noetherian groups $X$ are $p$-compact groups and Eilenberg-Mac
Lane spaces $K(\Z/p^r, 1)$ and, $ K(\Z^\wedge_p,2)$. In the spirit
of our deconstruction results for $H$-spaces, \cite{MR2264802}, we
show that these are the basic building blocks for all
$p$-Noetherian groups.

\medskip

\noindent{\bf Theorem~\ref{structure of BX}}
{\em Let $(X, BX, e)$ be any $p$-Noetherian group. There exists then
a fibration
$$
K(P, 2)^\wedge_p \longrightarrow BX \longrightarrow BY\, ,
$$
where $P$ is a $p$-group which is a finite direct sum of copies of
cyclic groups and Pr\"ufer groups and $Y$ is a $p$-compact group.
}
\medskip

We stress that this fibration is functorial in $BX$ as the base
space is obtained by Bousfield localization. The understanding of
the cohomology of $BX$ goes through the analysis of the Serre
spectral sequence of this fibration. Note that, by Dwyer and
Wilkerson's main theorem in~\cite{MR1274096}, $H^*(BY; \F_p)$ is
finitely generated as an algebra. Also, the mod $p$ cohomology of
$K(P, 2)^\wedge_p$ is finitely generated as an algebra over the
Steenrod algebra by \cite{MR0060234} and \cite{MR0087934}. The
spectral sequence is not nearly as nice as what happens for
$H$-spaces though, compare with \cite{CCS5}, and we must first
tackle Theorem~\ref{thm TQforpNoetherian}.

This we do in Sections~\ref{sec splitting} -- \ref{sec QandU} by
giving first a geometric interpretation to $\bar T QH^*(BX;
\F_p)$, where $\bar T$ is Lannes' reduced $T$-functor. Recall that
for ``nice" spaces (such as $BX$) the unreduced $T$ functor $T
H^*(BX; \F_p)$ computes the cohomology of the mapping space $\map
(B\Z/p, BX)$ and we rely on Schwartz's characterization
\cite[Theorem~6.2.4]{MR95d:55017} of the Krull filtration in terms
of~$\bar T$. In order to perform our reduced $T$ functor
calculation, we prove that the component $\map (B\Z/p, BX)_c$ of
the constant map splits as a product $BX \times \map_* (B\Z/p,
BX)_c$. Using the properties of the $T$ functor, this splitting
yields a geometric interpretation of the reduced $T$ functor in
terms of the pointed mapping space, more precisely $\bar T QH^*(X;
\F_p) \cong H^*(\map_*(B\Z/p, BX)_c; \F_p)$.

We finally come back to Theorem~\ref{theorem BX and Ap}, which we
prove in two steps. First we investigate in Section~\ref{sec
Serre} the Serre Spectral sequence for fibrations over spaces with
finite cohomology and fibre a finite product of Eilenberg
Mac--Lane spaces. We show that in this situation the cohomology of
the total space is finitely generated as an algebra over~$\A_p$.
Secondly, we reduce the situation in which the base space of the
fibration is a $p$-compact toral group.

\medskip

Let us conclude the introduction with two remarks.
The results in this article show that for a compact simply-connected
Lie group $G$, the module $QH^*((BG)\langle 4 \rangle; \F_p)$ is
finitely generated over~$\A_p$ and belongs to $\U_1$. This puts into
context the calculations made by Harada and Kono, \cite{MR896947},
and sheds new light on how the cohomology will look like even in the
cases where an explicit description has not been obtained, compare
with Example~\ref{more than HK}.

Let us finally mention Corollary~\ref{cor finitecover} in which we
obtain, as a byproduct of our theory, a description of the
cohomology of the $n$-connected cover of an $(n-1)$-connected finite
complex. This was only known for $H$-spaces before, \cite{CCS5}.

\medskip

{\bf Acknowledgments.} This project originated at the
Mittag-Leffler Institute during the emphasis semester on homotopy
theory in 2006. We would like to thank Kasper Andersen, Jesper
Grodal, Frank Neumann, and Alain Jeanneret for their interest and
Akira Kono for pointing out a problem in an earlier version of
this article.

\section{The structure of $p$-Noetherian groups}
\label{sec pNoetherian}
This first section is devoted to the description of the
classifying space of $p$-Noetherian groups and the relation to
$p$-compact groups. Let us start with the definition and the basic
examples. The statements about the action of the Steenrod algebra
and the Krull filtration will be explained and developed in
Section~\ref{sec QandU}.

\begin{definition}
\label{def pNoetherian}
{\rm A \emph{$p$-Noetherian group} is a triple $(X, BX, e)$ where
$H^*(X; \F_p)$ is a Noetherian algebra (it is finitely generated as
an algebra), $BX$ is a $p$-complete space, and $e: X \rightarrow
\Omega BX$ is a weak equivalence.}
\end{definition}

We will often use the word $p$-Noetherian group for the loop space
$X$ and refer to $BX$ as the classifying space. Hence, we will say
that a $p$-Noetherian group is $n$-connected if so is the loop space
$X$, or equivalently if the classifying space is $(n+1)$-connected.
Note that if the \emph{integral} cohomology of a space is
Noetherian, as an algebra, then so is the mod $p$ cohomology (just
like finite loop spaces have finite mod $p$ cohomology).

\begin{remark}
\label{p-good}
{\rm Since $\pi_1(BX)\cong \pi_0(X)$ and $H^*(X;\F_p)$ is of
finite type, it follows that $\pi_1(BX)$ is finite and, therefore
$BX$ is a $p$-good space by \cite[VII, Proposition
5.1]{MR51:1825}. In fact, $\pi_1(BX)$ is a $p$-group by \cite[VII,
Proposition 4.3]{MR51:1825}.}
\end{remark}

\begin{example}
\label{ex pcg}
{\rm In \cite{MR1274096}, Dwyer and Wilkerson introduced the
notion of a $p$-compact group. A $p$-compact group is a loop space
$(X,BX,e)$ such that $BX$ is $p$-complete and $H^*(X;\F_p)$ is a
finite $\F_p$-vector space. It is clear from the definition that
$p$-compact groups are $p$-Noetherian groups. }
\end{example}

The most basic example of $p$-compact group is given by the
$p$-completed circle and its classifying space $K(\Z^\wedge_p, 2)$.
Our definition of $p$-Noetherian group allows to include not only all
$p$-compact groups but also the following Eilenberg-Mac Lane
spaces.

\begin{example}
\label{ex K(Z,2)}
{\rm Let $X = K(\Z^\wedge_p, 2)$, $BX = K(\Z^\wedge_p, 3)$, and $e$
the obvious homotopy equivalence between $\Omega K(\Z^\wedge_p, 3)$
and $K(\Z^\wedge_p, 2)$. This is a $p$-Noetherian group since
$H^*(K(\Z^\wedge_p, 2); \F_p) \cong \F_p[u]$ is finitely generated
as an algebra. Let us point out here that $H^*(K(\Z^\wedge_p,
3);\F_p)$ is finitely generated as an algebra over $\A_p$ and that
the module of indecomposable elements $QH^*(K(\Z^\wedge_p, 3);
\F_p)$ lives in $\U_1$. For example $QH^*(K(\Z^\wedge_2, 3); \F_2)
\cong \Sigma F(1)$, where $F(1)$ is the free unstable module on one
generator in degree~$1$.}
\end{example}

In fact, this is basically the only $1$-connected $p$-Noetherian
group $X$ such that $\Omega X$ is $\F_p$-finite.

\begin{proposition}
\label{prop torus}
Let $(X, BX, e)$ be a $p$-Noetherian group such that $H^*(\Omega X;
\F_p)$ is finite. Then $BX$ is 2-connected if and only if it is a
product of a finite number of copies of $K(\Z^\wedge_p, 3)$.
\end{proposition}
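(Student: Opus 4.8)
The plan is to read the extra hypothesis as the statement that $\Omega X$ is a $p$-compact group, and then to exploit the fact that $X=\Omega BX$ is a loop space in order to force this $p$-compact group to be a torus.

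First I would check that $X$ is $p$-complete: since $BX$ is $2$-connected and $p$-complete it is simply connected, hence nilpotent, so $p$-completion commutes with the loop functor and both $X=\Omega BX$ and $\Omega X=\Omega^2 BX$ are $p$-complete. Writing $G:=\Omega X=\Omega^2 BX$, the hypothesis that $H^*(\Omega X;\F_p)$ be finite, together with this $p$-completeness, says precisely that $(G,X,e')$ is a $p$-compact group with classifying space $X=BG$ (Example~\ref{ex pcg}). As $BX$ is $2$-connected, $X$ is simply connected, so $\pi_0(G)=\pi_1(X)=0$ and $G$ is connected. The reverse implication of the proposition is then immediate: if $BX\simeq\prod K(\Z^\wedge_p,3)$ it is $2$-connected and $p$-complete, $X\simeq\prod K(\Z^\wedge_p,2)$ has finitely generated (polynomial) cohomology, and $\Omega X\simeq\prod K(\Z^\wedge_p,1)$ has finite mod $p$ cohomology, so every hypothesis holds (compare Example~\ref{ex K(Z,2)}).

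For the forward implication the key observation is that $X=\Omega BX$ is a loop space, so $G=\Omega^2 BX$ is a \emph{double} loop space and its multiplication is therefore homotopy commutative. I would then invoke the $p$-compact analogue of Hubbuck's torus theorem: a homotopy-commutative connected $p$-compact group is a $p$-compact torus. Concretely this is visible through the maximal torus $T\hookrightarrow G$ and its Weyl group $W$, which Dwyer and Wilkerson realize as a $\Q_p$-reflection group acting on $\pi_2(BT)\otimes\Q_p$, see \cite{MR1274096}; a nontrivial reflection produces a rank-one subgroup (a $p$-compact version of $SU(2)$) whose nonvanishing Samelson product obstructs homotopy commutativity, so homotopy commutativity forces $W=1$ and hence $T\xrightarrow{\ \simeq\ }G$. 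As supporting evidence that only tori survive, Borel's structure theorem writes the Hopf algebra $H^*(X;\F_p)$ as a tensor product of monogenic pieces, and upon looping down every exterior or truncated-polynomial factor would contribute an infinite divided-power algebra to $H^*(G;\F_p)$; finiteness of $G$ thus already forces $H^*(X;\F_p)$ to be polynomial on even-degree generators, and the remaining content, that these generators all sit in degree $2$, is exactly what homotopy commutativity buys.

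Once $G$ is identified as a $p$-compact torus $T$ of some rank $n$, we have $X=BG\simeq BT\simeq K(\Z^\wedge_p,2)^n$, and delooping once more gives $BX\simeq K(\Z^\wedge_p,3)^n$, a finite product of copies of $K(\Z^\wedge_p,3)$ as claimed. The main obstacle is precisely this forward step pinning the $p$-compact group down to a torus: the soft invariants (rational homotopy, local finiteness of $QH^*(X;\F_p)$, or even the connectivity of $BX$) do not distinguish degree-$2$ generators from higher even ones, so one genuinely needs the rigidity of the second loop structure, that is, homotopy commutativity together with the reflection-group description of the Weyl group of a connected $p$-compact group.
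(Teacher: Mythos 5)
Your reduction to a statement about $p$-compact groups is reasonable, and your preliminary steps ($p$-completeness of $X$ and $\Omega X$, connectedness of $G=\Omega^2 BX$, and the easy reverse implication) are fine; up to this point you are following the same strategy as the paper, which also identifies $\Omega X\simeq \Omega^2 BX$ as a connected, homotopy commutative, mod $p$ finite loop space and then invokes a torus theorem. The gap is in the key step: the ``$p$-compact analogue of Hubbuck's torus theorem'' you formulate, namely that \emph{every homotopy-commutative connected $p$-compact group is a $p$-compact torus}, is false, and your sketch of its proof breaks down exactly where it matters. Take $Y=(S^3)^\wedge_p=SU(2)^\wedge_p$ for $p\geq 5$: this is a connected $p$-compact group with nontrivial Weyl group $\Z/2$, and its commutator map factors through an element of $\pi_6(S^3)\otimes \Z^\wedge_p\cong (\Z/12)\otimes\Z^\wedge_p=0$, so it \emph{is} homotopy commutative, yet it is not a torus. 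More generally the Sullivan spheres $(S^{2n-1})^\wedge_p$ for $2n\mid p-1$ are homotopy commutative loop spaces (McGibbon determined precisely which $p$-localized Lie groups are homotopy commutative, and the list is far from empty). These examples show that a nontrivial reflection does \emph{not} produce a nonvanishing Samelson product $p$-locally --- the relevant obstruction groups have no $p$-torsion for large $p$ --- so plain homotopy commutativity of the binary multiplication, which is all you extract from the double loop structure, cannot force $W=1$.

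What the paper actually uses is the mod $p$ torus theorem of Hubbuck and Aguad\'e--Smith, \cite{MR38:6592} and \cite{Aguade}, applied to the double loop space $\Omega^2BX$; the substance of that theorem is that it exploits coherence data beyond the binary commutator (the kind of higher/compatible commutativity a double loop space carries), and the examples above show such a hypothesis cannot be dropped. If you want to stay inside $p$-compact group theory, as you attempted, the correct input is not homotopy commutativity of $Y=\Omega X$ but the fact that its classifying space $BY=X=\Omega BX$ is itself an $H$-space: the adjoint of the multiplication gives a map $X\rightarrow \map(X,X)_{id}$ landing in the identity component and split by evaluation, so $BY$ is a homotopy retract of $\map(BY,BY)_{id}$, which by Dwyer and Wilkerson's theory of the center of a $p$-compact group is the classifying space of an abelian $p$-compact group (a $p$-compact torus times a finite abelian $p$-group); since $BY$ is simply connected, its homotopy is then concentrated in degree $2$ and free over $\Z^\wedge_p$, forcing $Y$ to be a $p$-compact torus, whence $BX\simeq K(\Z^\wedge_p,3)^n$ by $2$-connectivity of $BX$. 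Either repair fills the hole; as written, your central claim is not merely unproved but false.
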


\begin{proof}
The loop space $\Omega X\simeq \Omega^2 BX$ is a connected homotopy
commutative mod $p$ finite $H$-space. Thus, by the mod $p$ version
of Hubbuck's Torus Theorem, \cite{MR38:6592} and \cite{Aguade}, we
see that $\Omega X$ is equivalent to a $p$-completed torus.
\end{proof}

\begin{example}
\label{ex BS33}
{\rm Let us consider the compact Lie group $S^3$ and its
$3$-connected cover $S^3 \langle 3 \rangle$. Identifying
$B(S^3\langle 3\rangle)$ with $(BS^3)\langle 4\rangle$, we have a
fibration $K(\Z, 3) \rightarrow (BS^3)\langle 4 \rangle
\rightarrow BS^3$. The triple $(S^3 \langle 3 \rangle^\wedge_p,
B(S^3) \langle 4 \rangle^\wedge_p, e)$ is then a $p$-Noetherian
group since $H^*(S^3 \langle 3 \rangle; \F_p) \cong \F_p[x]
\otimes E(y)$ where $x$ has degree $2p$, $y$ has degree $2p+1$,
and a Bockstein connects $x$ and $y$, $\beta(x)=y$. This
$p$-Noetherian group is an extension of the $p$-compact group
$(S^3)^\wedge_p$ and the Eilenberg-Mac Lane space from
Example~\ref{ex K(Z,2)}.

It is not difficult to compute the mod $p$ cohomology of
$(BS^3)\langle 4\rangle$ using the Serre spectral sequence. For
example, $H^*(B(S^3 \langle 3 \rangle); \F_2) \cong \F_2[z, Sq^1
z, Sq^4 z, Sq^{8,4} z, \dots]$, where $z$ has degree $5$. This is
a subalgebra of the mod $2$ cohomology of $K(\Z, 3)$ and $z$
corresponds to $Sq^2 \iota_3$, where $\iota_3$ is the fundamental
class. Again, we see that the module of indecomposable elements
belongs to $\U_1$, as it differs from $\Sigma F(1)$ by only a few
classes in low degrees.}
\end{example}

This last example fits into a more general picture. One can
consider $4$-connected covers of classifying spaces of compact Lie
groups.

\begin{example}
\label{Harada-Kono}
{\rm Let $G$ be a simply connected compact Lie group and consider
the $4$-connected cover of its classifying space, $(BG) \langle 4
\rangle$. Since the mod $p$ cohomology $G\langle 3 \rangle$ is
Noetherian, this provides an infinite number of examples of
$p$-Noetherian groups.

Harada and Kono studied in \cite{MR896947} and \cite{MR750280} the
fibration $K(\Z, 3) \rightarrow (BG) \langle 4 \rangle \rightarrow
BG$. They were able to compute explicitly, as an algebra, the
cohomology of the total space at odd primes and a few cases at the
prime~$2$. In all of these computations the result is the tensor
product of a quotient of $H^* (BG; \F_p)$ with a certain
subalgebra of $H^* (K(\Z, 3); \F_p)$, which turns out to be always
finitely generated as an algebra over the Steenrod algebra.}
\end{example}

In fact, $p$-Noetherian groups are closed under fibrations in the
following sense (and this explains why $G \langle 3 \rangle$
defines a $p$-Noetherian group).

\begin{proposition}
\label{closed-under-fibrations}
Let $BX\rightarrow E \rightarrow BZ$ be a fibration of connected
spaces where $BX$ and $BZ$ are classifying spaces of
$p$-Noetherian groups. Then $E$ is also the classifying space of a
$p$-Noetherian group.
\end{proposition}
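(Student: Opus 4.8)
The plan is to verify, for the triple $(\Omega E, E, \mathrm{id})$, the two conditions of Definition~\ref{def pNoetherian}: that $E$ is $p$-complete and that $H^*(\Omega E;\F_p)$ is a finitely generated algebra. I first record that all the relevant fundamental groups are finite $p$-groups. By Remark~\ref{p-good} applied to the $p$-Noetherian groups $X$ and $Z$, the groups $\pi_1(BX)$ and $\pi_1(BZ)\cong\pi_0(Z)$ are finite $p$-groups, and the homotopy exact sequence of $BX\to E\to BZ$ then shows that $\pi_1(E)$ is a finite $p$-group as well; in particular $E$ is connected and $p$-good.

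For $p$-completeness I would compare the given fibration with its $\F_p$-completion. Each $H_n(BX;\F_p)$ is finite dimensional, since $H^*(BX;\F_p)$ is finitely generated over $\A_p$ by Theorem~\ref{theorem BX and Ap} and hence of finite type; as the only simple $\F_p[\pi_1(BZ)]$-module is the trivial one (because $\pi_1(BZ)$ is a finite $p$-group), the action of $\pi_1(BZ)$ on $H_*(BX;\F_p)$ is nilpotent in each degree. The fibration $BX\to E\to BZ$ is therefore $\F_p$-nilpotent, and the fibre lemma of Bousfield and Kan \cite{MR51:1825} shows that $\F_p$-completion carries it to a fibration sequence $(BX)\com{p}\to E\com{p}\to (BZ)\com{p}$. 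Since $BX$ and $BZ$ are $p$-complete, the completion maps are equivalences on fibre and base, so a comparison of the two fibration sequences forces $E\to E\com{p}$ to be an equivalence; that is, $E$ is $p$-complete.

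It remains to show that $H^*(\Omega E;\F_p)$ is finitely generated as an algebra. Looping the fibration yields a fibration sequence $X\to\Omega E\to Z$ whose fibre and base have finitely generated cohomology by hypothesis. Because $\Omega E$, $X$ and $Z$ are grouplike $H$-spaces with finitely many components (each $\pi_0$ being a finite $p$-group), all their components are mutually equivalent and $H^*(-;\F_p)$ is a finite product of copies of the cohomology of the basepoint component; hence Noetherianity of $H^*(\Omega E;\F_p)$ is equivalent to that of the basepoint component $(\Omega E)_0$, which fibres over the connected space $Z_0$ with fibre of finitely generated cohomology. I would then study the Serre spectral sequence $E_2^{*,*}=H^*(Z_0;\mathcal H^*)$, where $\mathcal H^*$ is the local system given by the cohomology of the fibre, in order to conclude that the abutment $H^*((\Omega E)_0;\F_p)$ is finitely generated.

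The main obstacle lies in this last spectral sequence, for two reasons. First, the local coefficients: when the base is simply connected one has $E_2=H^*(Z_0;\F_p)\otimes H^*(X_0;\F_p)$, plainly finitely generated, but $\pi_1(Z_0)\cong\pi_2(BZ)$ need not be finite, and I must ensure that $H^*(Z_0;\mathcal H^*)$ remains finitely generated over the Noetherian ring $H^*(Z_0;\F_p)$. Second, passing from a finitely generated $E_2$ to a finitely generated abutment is not formal, since the homology of a Noetherian differential graded algebra need not be Noetherian. I expect to resolve both by reducing, via the structure theorem~\ref{structure of BX} applied to $Z$ and to $X$, to a finite tower of fibrations whose successive fibres are the two building blocks of Example~\ref{ex pcg} and Example~\ref{ex K(Z,2)}: $p$-compact groups, whose cohomology is \emph{finite}, and $p$-completed Eilenberg--Mac Lane pieces $K(P,2)\com{p}$. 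For a finite fibre the total space has cohomology finitely generated as a module over that of the base, and the Eilenberg--Mac Lane layers are exactly the situation controlled by the Serre spectral sequence analysis of Section~\ref{sec Serre}; carrying out this reduction cleanly is where the real work of the proof is concentrated.
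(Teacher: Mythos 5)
Your proof of $p$-completeness is essentially the paper's (the Bousfield--Kan fibre lemma, using that $\pi_1(BZ)$ is a finite $p$-group), but note one logical problem: you invoke Theorem~\ref{theorem BX and Ap} to get finite type of $H_*(BX;\F_p)$, and that theorem is proved in Section~\ref{sec pNoetherian_H^*} \emph{using} Proposition~\ref{closed-under-fibrations} (it is applied there to the fibration $K(P,2)^\wedge_p \rightarrow \widetilde{BN} \rightarrow BN$), so your citation is circular. It is also unnecessary: since $\pi_1(BZ)$ is a finite $p$-group, the augmentation ideal of $\F_p[\pi_1(BZ)]$ is nilpotent, so \emph{every} $\F_p[\pi_1(BZ)]$-module carries a nilpotent action and no finite-dimensionality is needed to apply the fibre lemma.

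The genuine gap is in the second half, which you leave as a plan with obstacles you yourself flag as unresolved. The paper closes this in two short steps that your sketch is missing. First, the looped fibration $X \rightarrow \Omega E \rightarrow Z$ is an \emph{$H$-fibration}, and \cite[Theorem~5.1]{CCS5} applies directly to give that $QH^*(\Omega E;\F_p)$ is a finitely generated $\A_p$-module; this also dissolves your local-coefficient worry, since for an $H$-fibration the $\pi_1$-action on the cohomology of the fibre is trivial. Second --- and this is the step absent from your proposal even as a goal --- finite generation over $\A_p$ is weaker than Noetherianity, and the upgrade is done by locality: $X$ and $Z$ are $B\Z/p$-local (their $QH^*$ is finite), $B\Z/p$-locality passes to total spaces of fibrations, and by \cite{MR92b:55004} a locally finite, finitely generated $\A_p$-module is finite; hence $QH^*(\Omega E;\F_p)$ is finite, i.e.\ $H^*(\Omega E;\F_p)$ is Noetherian. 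Your proposed substitute for the first step does not work as described: Theorem~\ref{structure of BX} decomposes $X$ and $Z$ separately, but this does not by itself produce a tower of fibrations with total space $\Omega E$ and building-block fibres (that deconstruction is precisely the content of the cited work, not a formality), and the Serre spectral sequence result of Section~\ref{sec Serre} (Theorem~\ref{thm SSS}) has mismatched hypotheses for your layers --- it requires a base with \emph{finite} cohomology and a product of Eilenberg--Mac~Lane spaces as fibre, and even then it only yields finite generation over $\A_p$, so you would still need the locality argument at the end.
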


\begin{proof}
Since $\pi_1(BZ)$ is a finite $p$-group and both $BX$ and $BZ$ are
$p$-complete and $p$-good (see Remark \ref{p-good}), the fibre
lemma \cite[II.5.1]{MR51:1825} shows that $E$ is also
$p$-complete. It remains to show that $H^*(\Omega E; \F_p)$ is a
finitely generated algebra.

Looping the fibration, we obtain an $H$-fibration $X\rightarrow
\Omega E \rightarrow Z$ where both $X$ and $Z$ have finitely
generated mod $p$ cohomology. By \cite[Theorem~5.1]{CCS5}, the
cohomology of $\Omega E$ is finitely generated as an algebra over
the Steenrod algebra, in other words $QH^*(\Omega E; \F_p)$ is
finitely generated as an $\A_p$-module. This unstable module is
thus finite if and only if it is locally finite, which by
\cite{MR92b:55004} is equivalent to the evaluation
$\map(B\Z/p,\Omega E)_c\rightarrow E$ to be an equivalence. This
follows from the fact that this is the case for $X$ and $Z$.
\end{proof}

Let us analyze the structure of an arbitrary connected
$p$-Noetherian group. The following theorem  tells us that it
always differs from a $p$-compact group in a single $p$-completed
Eilenberg-MacLane space.

\begin{theorem}
\label{structure of BX}
Let $(X, BX, e)$ be any $p$-Noetherian group. There exists then a
fibration
$$
K(P, 2)^\wedge_p \rightarrow BX \rightarrow BY\, ,
$$
where $P$ is a finite direct sum of copies of cyclic groups and
Pr\"ufer groups and $Y$ is a $p$-compact group.
\end{theorem}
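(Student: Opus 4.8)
The plan is to produce the fibration by Bousfield localization with respect to $B\Z/p$, and then identify the fibre and the base separately. Let me set this up carefully.

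First I would define $BY$ to be the $B\Z/p$-nullification (localization) $L_{B\Z/p}(BX)$, so that the localization map $BX \to BY$ is natural in $BX$, as claimed in the introduction. The key point is to show that $Y$ is a $p$-compact group. For this, I would show first that $\map(B\Z/p, BY)_c \to BY$ is an equivalence, so that $Y$ is $B\Z/p$-local by construction, which by \cite{MR827370} means $H^*(Y;\F_p)$ is locally finite, i.e.\ $QH^*(Y;\F_p)$ lies in $\U_0$. Combined with the hypothesis that $H^*(X;\F_p)$ is Noetherian (so $QH^*(X;\F_p)$ is finitely generated over $\A_p$ and locally finite), I would want to deduce that $H^*(Y;\F_p)$ is actually \emph{finite}, which together with $p$-completeness of $BY$ makes $Y$ a $p$-compact group in the sense of Example~\ref{ex pcg}. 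The passage from ``finitely generated $\A_p$-algebra plus locally finite indecomposables'' to genuinely finite cohomology is the essential finiteness statement, and establishing it for $Y$ is the crux.

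Next I would identify the homotopy fibre $F$ of $BX \to BY$. By the fibration lemma and the defining property of nullification, $F$ should be $B\Z/p$-\emph{acyclic} in the appropriate sense (the fibre of the nullification map captures exactly the ``part of $BX$ visible to $B\Z/p$''). Since $X$ is a $p$-Noetherian group, the fibre is $1$-connected and $p$-complete, and one can analyze its homotopy via the action of Lannes' $T$-functor. I expect the fibre to be built from $K(\Z/p^r,2)$'s and $K(\Z^\wedge_p,2)$'s, corresponding exactly to cyclic and Pr\"ufer summands of a $p$-group $P$; the Noetherian hypothesis forces $P$ to be a \emph{finite} direct sum of such summands, since an infinite sum would make the cohomology of the fibre (and hence, via the Serre spectral sequence, of $BX$) too large to come from a Noetherian loop space. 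Concretely, $H^*(K(P,2)^\wedge_p;\F_p)$ finitely generated as an $\A_p$-algebra (by \cite{MR0060234} and \cite{MR0087934}) constrains $P$ to this form.

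The hardest part, I expect, is proving that $Y$ is genuinely a $p$-compact group, i.e.\ that local finiteness of $QH^*(Y;\F_p)$ upgrades to finiteness of $H^*(Y;\F_p)$. The subtlety is that $B\Z/p$-locality only gives control over the indecomposables modulo the Krull filtration, and one must rule out infinitely generated polynomial or divided-power behavior surviving in the base. I would attack this using the hypothesis that $H^*(X;\F_p)$ is Noetherian together with the loop-space relation $X \simeq \Omega BX$: looping the fibration gives an $H$-fibration $\mathrm{fib} \to X \to Y$, and since $H^*(X;\F_p)$ is finitely generated and the fibre's cohomology is understood, an Eilenberg--Moore or Serre spectral sequence argument should force $H^*(Y;\F_p)$ to be finite. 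One must check that the fibre $P$ is indeed a finite direct sum; otherwise the argument that $Y$ has finite cohomology would break down, so these two claims are genuinely intertwined and should be proved in tandem. Throughout I would lean on Lannes' $T$-functor machinery (\cite{MR93j:55019}) and Schwartz's characterization of the Krull filtration via $\bar T$ (\cite{MR95d:55017}), exactly as signposted in the introduction.
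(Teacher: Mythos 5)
Your overall template (nullify, identify the fibre as an Eilenberg--Mac Lane space, show the base is a $p$-compact group) is the same as the paper's, but your very first step uses the wrong localization, and this is fatal. You set $BY = P_{B\Z/p}BX$. Classifying spaces of nontrivial $p$-compact groups are essentially never $B\Z/p$-local: by Lannes' theory, $\map(B\Z/p, BX)$ decomposes into components indexed by representations $\Z/p \to X$, so already for $X=(S^3)^\wedge_p$ the evaluation $\map(B\Z/p,BX)\rightarrow BX$ is not an equivalence. Consequently, when $X$ is itself a $p$-compact group --- a case in which the natural construction must return $P=0$ and $BY\simeq BX$ --- your nullification collapses $BX$ to something strictly smaller, and the homotopy fibre of $BX \rightarrow P_{B\Z/p}BX$ is not a $K(P,2)$ at all; the $B\Z/p$-nullification destroys precisely the $p$-compact part you need to keep. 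The correct functor sits one suspension up: since $H^*(X;\F_p)$ is Noetherian, $QH^*(X;\F_p)$ is finite, so $\Omega X$ is $B\Z/p$-local by \cite{MR92b:55004}; equivalently $BX$ is $\Sigma^2 B\Z/p$-local \cite{Dror}, and Bousfield's Theorem~7.2 in \cite{B2} then says exactly that the fibre of the nullification map $BX \rightarrow P_{\Sigma B\Z/p}BX$ is a single Eilenberg--Mac Lane space $K(P,2)$ with $P$ abelian $p$-torsion, and that this fibration is \emph{principal}. Principality is not a luxury: it is what lets the fibre lemma of \cite{MR51:1825} $p$-complete the fibration, so that $BY := (P_{\Sigma B\Z/p}BX)^\wedge_p$ is $p$-complete and the fibre becomes $K(P,2)^\wedge_p$.

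The step you yourself call the crux --- upgrading locally finite cohomology of the base to genuinely finite cohomology, simultaneously with showing $P$ is a \emph{finite} direct sum of cyclic and Pr\"ufer groups --- is left as a hope (``an Eilenberg--Moore or Serre spectral sequence argument should force\dots''), and as sketched it is circular: you use finiteness of $P$ to control the base and control of the base to bound $P$. The paper does not prove this from scratch and does not run a spectral sequence on $BX$; instead it loops the fibration once, using $\Omega P_{\Sigma B\Z/p}BX \simeq P_{B\Z/p}X$ \cite{Dror} to obtain a loop fibration $K(P,1)\rightarrow X \rightarrow P_{B\Z/p}X$, and then quotes the structure theorem for loop spaces ($H$-spaces) with finitely generated mod $p$ cohomology (\cite[Theorem~7.3]{MR2264802}, or \cite{MR2002g:55016}), which yields \emph{both} conclusions at once: $P$ is a finite direct sum of cyclic and Pr\"ufer groups and $H^*(P_{B\Z/p}X;\F_p)$ is finite. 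If you want a self-contained proof you must supply a substitute for that theorem; nothing in your sketch does so. A smaller slip worth fixing: a Pr\"ufer summand $\Z_{p^\infty}$ of $P$ contributes $K(\Z_{p^\infty},2)^\wedge_p \simeq K(\Z^\wedge_p,3)$ to the completed fibre, not a $K(\Z^\wedge_p,2)$.
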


\begin{proof}
By assumption the mod $p$ cohomology of $X$ is finitely generated
as an algebra. In other words, the module of indecomposable
elements $QH^*(X; \F_p)$ is finite. Therefore, by
\cite[Theorem~3.2]{MR92b:55004} the loop space $\Omega X$ is
$B\Z/p$-local, or equivalently the classifying space $BX$ is
$\Sigma^2B\Z/p$-local (\cite[Theorem~3.A.1]{Dror}). The analysis
in \cite{B2} by Bousfield of the Postnikov like nullification
tower shows then that the homotopy fiber of the nullification map
$BX \rightarrow P_{\Sigma B\Z/p} BX$ is a single Eilenberg-Mac
Lane space $K(P, 2)$, where $P$ is an abelian $p$-torsion group.
Moreover, he also shows that the corresponding fibration is
principal. In particular, it implies that $P_{\Sigma B\Z/p} BX$ is
a $p$-good space, and $(P_{\Sigma B\Z/p} BX)^\wedge_p$ is
$p$-complete by the fibre lemma \cite[II.5.1]{MR51:1825}.

{}From the equivalence $P_{B\Z/p} X \simeq \Omega P_{\Sigma B\Z/p}
BX$, \cite[Theorem~3.A.1]{Dror}, we obtain a loop fibration $K(P,
1) \rightarrow X \rightarrow P_{B\Z/p} X$. Since $X$ is a loop
space with finitely generated mod $p$ cohomology, we know from
\cite[Theorem~7.3]{MR2264802}, or directly from
\cite{MR2002g:55016}, that $P$ is a finite direct sum of copies of
cyclic groups and Pr\"ufer groups and that $H^*(P_{B\Z/p} X;
\F_p)$ is finite.

Let us consider the loop space $P_{B\Z/p} X$. Notice that $\pi_1
P_{\Sigma B\Z/p} BX \cong \pi_1 BX$, which must be a finite
$p$-group by Remark \ref{p-good}. By $p$-completing we obtain
hence a $p$-compact group $BY = (P_{\Sigma B\Z/p} BX)^\wedge_p$.
\end{proof}

The fibration we have obtained allows us to give a precise
description of the component of the constant in the pointed mapping
space $\map_*(B\Z/p, BX)$.

\begin{corollary}
\label{cor acyclicmap}
Let $(X, BX, e)$ be a $p$-Noetherian group. Then $\map_*(B\Z/p,
BX)_c$ is the classifying space of a finite elementary abelian
$p$-group.
\end{corollary}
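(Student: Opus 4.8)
The plan is to apply the functor $\map_*(B\Z/p,-)$ to the fibration
$$
K(P,2)^\wedge_p \rightarrow BX \rightarrow BY
$$
provided by Theorem~\ref{structure of BX}, and to analyse the resulting fibration of pointed mapping spaces on the components of the constant map. Since $\map_*(B\Z/p,-)$ preserves fibration sequences (it commutes with homotopy fibres), this reduces the statement to two independent computations: that the base $BY$ contributes nothing, and that each cyclic or Pr\"ufer summand of $P$ contributes a single copy of $B\Z/p$.

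First I would dispose of the base. As $Y$ is a $p$-compact group, $H^*(Y;\F_p)$ is finite, hence locally finite, so $Y$ is $B\Z/p$-local by the characterization of Miller and Lannes recalled in the introduction; that is, the evaluation $\map(B\Z/p,Y)\rightarrow Y$ is a weak equivalence, so its homotopy fibre $\map_*(B\Z/p,Y)$ is contractible. Using the adjunction $\Omega\,\map_*(B\Z/p,BY)_c \simeq \map_*(B\Z/p,\Omega BY)_c = \map_*(B\Z/p,Y)_c$, I conclude that $\map_*(B\Z/p,BY)_c$ is a connected space with contractible loop space, hence itself contractible. Feeding this into the long exact sequence of homotopy groups of
$$
\map_*(B\Z/p,K(P,2)^\wedge_p)_c \rightarrow \map_*(B\Z/p,BX)_c \rightarrow \map_*(B\Z/p,BY)_c
$$
yields an equivalence $\map_*(B\Z/p,BX)_c \simeq \map_*(B\Z/p,K(P,2)^\wedge_p)_c$.

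It then remains to compute the fibre term. Here I would first split $K(P,2)^\wedge_p$ as a product of Eilenberg-Mac Lane spaces according to the decomposition of $P$ into cyclic and Pr\"ufer summands. A cyclic summand $\Z/p^k$ gives a factor $K(\Z/p^k,2)$, already $p$-complete; a Pr\"ufer summand $\Z/p^\infty$ gives $K(\Z/p^\infty,2)^\wedge_p$, which I identify with $K(\Z^\wedge_p,3)$ by $p$-completing the fibration $K(\Z^\wedge_p,2)\rightarrow K(\Q_p,2)\rightarrow K(\Z/p^\infty,2)$ and noting that $K(\Q_p,2)$ is mod~$p$ acyclic since $\Q_p$ is uniquely $p$-divisible. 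As the constant component of a product of pointed mapping spaces is the product of the constant components, it suffices to treat each factor. For each factor the homotopy groups of $\map_*(B\Z/p,-)$ are the reduced cohomology groups $\tilde H^{*}(B\Z/p;-)$ in the appropriate degree, and a short computation shows that in both the cyclic and the Pr\"ufer case the constant component reduces to $K(\Z/p,1)=B\Z/p$, the only surviving homotopy group being $\pi_1\cong\Z/p$. Taking the product over the finitely many summands identifies $\map_*(B\Z/p,BX)_c$ with $BV$ for a finite elementary abelian $p$-group $V$ whose rank equals the number of summands of $P$.

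The \emph{delicate points}, and where I expect the real work to lie, are in this last computation rather than in the formal reduction: one must handle the $p$-completion of the Eilenberg-Mac Lane space on a Pr\"ufer group correctly, keep careful track of reduced versus unreduced cohomology, and verify that in the cyclic case, although $\pi_0 = \tilde H^2(B\Z/p;\Z/p^k)\cong\Z/p$ is nontrivial, this only records the several components while the constant one is still $B\Z/p$. One should also check that restricting the fibration of mapping spaces to the constant components is legitimate, which is guaranteed precisely by the contractibility of the base mapping space established above.
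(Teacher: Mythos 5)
Your proof is correct and follows the same reduction as the paper's: both use the fibration $K(P,2)^\wedge_p \rightarrow BX \rightarrow BY$ of Theorem~\ref{structure of BX}, kill the base via Miller's theorem (since $H^*(Y;\F_p)$ is finite, $\map_*(B\Z/p,BY)_c$ is contractible), and thereby identify $\map_*(B\Z/p,BX)_c$ with $\map_*(B\Z/p,K(P,2)^\wedge_p)_c$. The only divergence is the final computation: the paper quotes Miller's Theorem~1.5 to replace $\map_*(B\Z/p,K(P,2)^\wedge_p)$ by $\map_*(B\Z/p,K(P,2))$, whose homotopy groups are $\tilde H^{2-*}(B\Z/p;P)$, and reads off at once that the constant component is the classifying space of $\Hom(\Z/p,P)$; you instead decompose $P$ into cyclic and Pr\"ufer summands, identify $K(\Z/p^\infty,2)^\wedge_p\simeq K(\Z^\wedge_p,3)$ by hand via the mod $p$ acyclicity of $K(\Q_p,2)$, and compute factor by factor. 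Both are valid and give the same group $V=\Hom(\Z/p,P)$ (your ``number of summands'' is exactly its rank). The paper's route is shorter and needs the explicit structure of $P$ only to conclude that $V$ is finite, whereas yours trades the citation of Miller's theorem on $p$-completed targets for elementary Eilenberg-Mac Lane identifications, at the minor cost of justifying that $p$-completion commutes with the finite product decomposition and running a fibre-lemma argument for the Pr\"ufer factors. Your attention to the delicate points --- the nontrivial $\pi_0$ in the cyclic case, and the legitimacy of restricting the mapping-space fibration to constant components, guaranteed by contractibility of the base term --- is exactly what is needed.
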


\begin{proof}
Consider the fibration $K(P, 2)^\wedge_p \rightarrow BX
\rightarrow BY$ from Theorem~\ref{structure of BX}. Since $BY$ is
a $p$-compact group, $H^*(\Omega BY;\F_p)$ is finite and
$\map_*(B\Z/p, \Omega BY)\simeq *$ by \cite{Miller}. Therefore the
component $\map_*(B\Z/p, BY)_c$ is contractible and $\map_*(B\Z/p,
BX)_c \simeq \map_*(B\Z/p, K(P, 2)^\wedge_p)_c$. By
\cite[Theorem~1.5]{Miller}, $\map_*(B\Z/p, K(P, 2)^\wedge_p)\simeq
\map_*(B\Z/p, K(P, 2))$, which has trivial homotopy groups in
degrees~$\geq 2$. The component of the constant map is thus the
classifying space of a finite elementary abelian $p$-group
$V=\Hom(\Z/p, P)$.
\end{proof}

{}From Theorem~\ref{structure of BX} we deduce that many
$p$-Noetherian groups are $3$-connected covers of $p$-compact
groups.

\begin{corollary}
\label{4connectedBX}
Let $(X, BX, e)$ be a $p$-Noetherian group. Then $X$ is
$3$-connected if and only if $BX$ is the $4$-connected cover of the
classifying space of a $p$-compact group.
\end{corollary}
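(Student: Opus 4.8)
The plan is to prove the two implications separately, with essentially all of the content concentrated in the forward direction. The backward implication is immediate: if $BX$ is the $4$-connected cover of the classifying space of a $p$-compact group then by definition $\pi_i(BX)=0$ for $i\leq 4$, so $X\simeq\Omega BX$ has $\pi_i(X)=\pi_{i+1}(BX)=0$ for $i\leq 3$, i.e. $X$ is $3$-connected.

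For the forward direction I would take as input the fibration $K(P,2)^\wedge_p\to BX\to BY$ of Theorem~\ref{structure of BX} and study its long exact sequence in homotopy, after first identifying the homotopy type of the fibre $F=K(P,2)^\wedge_p$. Splitting $P$ into its finite cyclic and Pr\"ufer summands, I would use that $K(\Z/p^r,2)$ is already $p$-complete, whereas $K(\Z/p^\infty,2)^\wedge_p\simeq K(\Z^\wedge_p,3)$; this last equivalence follows by $p$-completing the fibration $K(\Z,2)\to K(\Z[1/p],2)\to K(\Z/p^\infty,2)$ and observing that $K(\Z[1/p],2)$ is mod $p$ acyclic, hence has contractible $p$-completion. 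Consequently $F$ has homotopy concentrated in degrees $2$ and $3$, and in particular $\pi_i(F)=0$ for all $i\geq 4$.

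Then, assuming $X$ is $3$-connected, equivalently $BX$ is $4$-connected, I would read off from the long exact sequence that $\pi_i(BY)=0$ for $i\leq 2$, that the isomorphisms $\pi_3(BY)\cong\pi_2(F)$ and $\pi_4(BY)\cong\pi_3(F)$ carry respectively the cyclic and Pr\"ufer contributions, and that $\pi_i(BX)\cong\pi_i(BY)$ for every $i\geq 5$, since both $\pi_i(F)$ and $\pi_{i-1}(F)$ vanish there. Because $BX$ is $4$-connected, the composite $BX\to BY\to P_4 BY$ with the fourth Postnikov section is nullhomotopic by obstruction theory, so $BX\to BY$ lifts to a map $f\colon BX\to BY\langle 4\rangle$ to the $4$-connected cover. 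The map $f$ is then an isomorphism on all homotopy groups: both sides vanish in degrees $\leq 4$, and for $i\geq 5$ the covering map $BY\langle 4\rangle\to BY$ is an isomorphism through which $f_*$ recovers the isomorphism $\pi_i(BX)\cong\pi_i(BY)$ above. Whitehead's theorem then shows $f$ is a weak equivalence, so $BX$ is indeed the $4$-connected cover of the classifying space $BY$ of the $p$-compact group $Y$.

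The step I expect to be the main obstacle is the precise determination of the fibre $F=K(P,2)^\wedge_p$, and in particular the behaviour of the Pr\"ufer summands under $p$-completion: each $K(\Z/p^\infty,2)$ is shifted up to $K(\Z^\wedge_p,3)$, and it is exactly this degree shift that produces the potentially nonzero group $\pi_4(BY)$ which must be killed on passing to the $4$-connected cover. Once the homotopy of $F$ is understood, the remainder is a routine diagram chase in the long exact sequence combined with the universal property of connected covers.
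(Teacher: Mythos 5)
Your proof is correct, and the forward direction takes a genuinely different route from the paper's. Both arguments start from the fibration $K(P,2)^\wedge_p \rightarrow BX \rightarrow BY$ of Theorem~\ref{structure of BX} and from the same identification of the fibre's homotopy (cyclic summands of $P$ contributing in degree $2$, Pr\"ufer summands shifted to degree $3$ via $K(\Z/p^\infty,2)^\wedge_p \simeq K(\Z^\wedge_p,3)$). But where you keep whatever $\pi_3(BY)$ and $\pi_4(BY)$ the long exact sequence produces and then kill them by lifting $BX \rightarrow BY$ to the $4$-connected cover $BY\langle 4\rangle$ (obstruction theory against the Postnikov section $P_4BY$, then Whitehead), the paper instead invokes Browder's theorem \cite[Theorem~6.10]{MR23:A2201}: the exact sequence shows $BY$ is $2$-connected, so $Y$ is a simply connected mod $p$ finite $H$-space, whence $\pi_2Y=0$ and $BY$ is in fact $3$-connected; feeding this back into the exact sequence forces $\pi_2$ of the fibre to vanish, i.e. $P$ is divisible, so the fibre is $K(\oplus\Z^\wedge_p,3)$ and the given fibration is itself the covering fibration. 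Your argument is more self-contained --- it needs no input about torsion in finite $H$-spaces --- and it makes explicit the universal-property step (lift plus weak equivalence) that the paper leaves implicit; the paper's argument is shorter and yields strictly more structure, namely that $P$ has no finite cyclic summands and that $BY$ itself is $3$-connected, information your route does not recover.
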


\begin{proof}
One implication is obvious. Let us hence assume that $X$ is
$3$-connected and consider the fibration $K(P, 2)^\wedge_p
\rightarrow BX \rightarrow BY$ from Theorem~\ref{structure of BX}.
We see that $BY$ is $2$-connected, hence
$3$-connected~\cite[Theorem~6.10]{MR23:A2201}. This shows that $P$
must be a divisible abelian $p$-group, or equivalently that $K(P,
2)^\wedge_p \simeq K(\oplus \Z^\wedge_p, 3)$.
\end{proof}

To the Lie group Example~\ref{Harada-Kono} we can add now new
examples of $p$-Noetherian groups, namely those given by
$3$-connected covers of exotic $p$-compact groups.

\begin{example}
\label{exotic-p-compact-group}
{\rm  Let $X$ be a $p$-compact group such that $BX$ is
$3$-connected and $\pi_4(BX)\cong \Z\pcom$. By looking at the
classification of $p$-compact groups, we observe that there are
only two sporadic examples, namely numbers $23$ and $30$ in the
Shephard-Todd list \cite{MR0059914}, and one infinite family,
number $2b$, corresponding to the dihedral groups $D_{2m}$. The
triple $(X\langle 3 \rangle, (BX)\langle 4 \rangle, e)$ is a
$p$-Noetherian group by Corollary~\ref{4connectedBX}.

The two sporadic examples are defined at primes $p \equiv 1,4$ mod
$5$, and they are non-modular since the only primes which divide the
order of their Weyl group are $2$, $3$ and $5$.  The family of
$p$-compact groups corresponding to the dihedral groups $D_{2m}$ is
defined for primes $p\equiv \pm 1$ mod $m$. Note that $p=2$ occurs
when $m=3$ and corresponds to the exceptional Lie group~$G_2$.}
\end{example}

\begin{remark}
\label{classification}
{\rm From Corollary~\ref{4connectedBX} we obtain a classification
of $3$-connected $p$-Noetherian groups. They are given by the
$3$-connected covers of simply connected $p$-compact groups, which
are known from the recent classification results,
\cite{MR2314074}, \cite{MR2338539}, \cite{classification2}, and
\cite{classificationp}. A general classification will be more
difficult to obtain, even in the $2$-connected case, as there are
$p$-Noetherian groups fibering over a product of $p$-compact
groups which do not split themselves as a product. Consider indeed
the homotopy fiber of the composite map
$$
f: BS^3 \times BS^3 \rightarrow  K(\Z, 4) \times K(\Z, 4)
\rightarrow K(\Z, 4),
$$
where the first map is the fourth Postnikov section and the second
is given by the sum. Let us complete this fiber at the prime $7$
for example and call it $BX$. Even though $X$ splits as a product
$(S^3)^\wedge_7 \times (S^3)^\wedge_7 \langle 3 \rangle$, the
classifying space $BX$ does not split.

Assume that $BX$ splits as a product $(BS^3)^\wedge_7 \times
(BS^3)^\wedge_7\langle 4 \rangle$. There exists then an essential
map $g: (BS^3)^\wedge_7 \rightarrow (BS^3)^\wedge_7 \times
(BS^3)^\wedge_7$ such that $f \circ g \simeq *$ and $p_1 \circ g$
is an equivalence, where $p_1$ denotes the projection on the first
factor. But $g$ induces on the fourth homology group a map of
degree $n\neq 0$ on the first copy of $(BS^3)^\wedge_7$ and of
degree $m$ on the second. The composite $f \circ g$ will thus have
degree $n+m$ on $H_4$. We claim that this cannot be zero. Both $m$
and $n$ must be squares in $\Z^\wedge_7$ as a self-map of
$(BS^3)^\wedge_7$ is induced by a self-map on the maximal
torus~$(BS^1)^\wedge_7$. But the sum of two $7$-adic squares is
nul if and only both are so. Therefore $BX$ cannot split as a
product. We refer the reader to Dwyer and Mislin's
article~\cite{MR928824} for a complete study of self-maps
of~$BS^3$.}
\end{remark}

\section{Splitting fibrations and mapping spaces}
\label{sec splitting}
Our next aim is to obtain conditions under which the total space of
the evaluation fibration $\map_*(B\Z/p, BX)_c \rightarrow \map(B\Z/p,
BX)_c \rightarrow BX$ splits as a product. The key observation is that
$\map_*(B\Z/p, BX)_c$ has a single non-trivial homotopy group, see
Corollary~\ref{cor acyclicmap}.

We thus consider a fibration $F \rightarrow E \rightarrow B$ of
connected spaces and assume that the homotopy fiber $F$ has finitely
many homotopy groups (it is a Postnikov piece, or in other words
there exists an integer $n$ such that the $n$-th Postnikov section
$F \rightarrow F[n]$ is a homotopy equivalence). Such a fibration is
classified by a map $B \rightarrow B\aut(F)$, where $\aut(F)$
denotes the monoid of self-equivalences of $F$. The original
fibration can be recovered by pulling-back the universal fibration
$F \rightarrow B\aut_*(F) \rightarrow B\aut(F)$, where $\aut_*(F)$
is the monoid of pointed self-equivalences. The existence of the
universal fibration was known to Dold, \cite{MR0198464}, but it was
Gottlieb who identified first the total space, \cite{MR0331384}.

\begin{proposition}
\label{prop Jesper}
Let $F \simeq F[n]$ be a connected Postnikov piece and $X$ be any
space. Then
\begin{itemize}
\item[(1)] $\map(X, F) \simeq (\map(X, F))[n]$ and $\pi_n(\map(X, F))\cong \pi_n(F)$,

\item[(2)] $\map_*(X, F) \simeq (\map_*(X, F))[n-1]$.
\end{itemize}
\end{proposition}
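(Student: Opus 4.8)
The plan is to compute the homotopy groups of the mapping spaces directly using the fact that $F$ is a Postnikov piece, i.e. $\pi_k(F) = 0$ for $k > n$. The key tool is the natural isomorphism
$$
\pi_k(\map(X, F)) \cong [\Sigma^k X_+, F] \cong [S^k, \map(X,F)]
$$
and its pointed analogue. More precisely, for a based space $X$ one has $\pi_k(\map_*(X, F), c) \cong [\Sigma^k X, F]_*$ where the basepoint is the constant map $c$, and for the unbased mapping space one uses the splitting $\map(X,F) \simeq F \times \map_*(X,F)$ when $F$ is a loop space, or more robustly the evaluation fibration. Let me sketch the two parts.

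For part (2), I would argue that $\pi_k(\map_*(X, F)) \cong [\Sigma^k X, F]_*$. Since $\Sigma^k X$ is $(k-1)$-connected (for $X$ connected, $\Sigma^k X$ is at least $k$-connected if $X$ is connected, so $\Sigma^k X$ has cells only in dimensions $\geq k+1$), and since $F \simeq F[n]$ has no homotopy above degree $n$, obstruction theory shows that $[\Sigma^k X, F]_* = 0$ whenever $k \geq n$. Indeed, a map $\Sigma^k X \to F$ is determined up to homotopy by obstructions lying in $H^{j}(\Sigma^k X; \pi_j F)$, which vanish once $j > n$; and since $\Sigma^k X$ is $k$-connected, these cohomology groups are concentrated in degrees $j \geq k+1$, so for $k \geq n$ there are no nonzero obstruction groups. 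Hence $\map_*(X, F)$ has trivial homotopy in degrees $\geq n$, which is exactly the statement $\map_*(X, F) \simeq (\map_*(X, F))[n-1]$.

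For part (1), I would feed the conclusion of (2) into the evaluation fibration
$$
\map_*(X, F) \rightarrow \map(X, F) \xrightarrow{\ \mathrm{ev}\ } F.
$$
The long exact sequence of homotopy groups then reads, in degrees $k \geq n$, as a sequence flanked by $\pi_k(\map_*(X,F)) = 0$ (from part (2), valid for $k \geq n$) and $\pi_{k-1}(\map_*(X,F))$, which vanishes for $k-1 \geq n$. For $k > n$ both flanking terms are zero, forcing $\pi_k(\map(X, F)) \cong \pi_k(F) = 0$; this gives $\map(X, F) \simeq (\map(X,F))[n]$. At the top degree $k = n$, the sequence
$$
0 = \pi_n(\map_*(X,F)) \to \pi_n(\map(X,F)) \xrightarrow{\ \mathrm{ev}_*\ } \pi_n(F) \to \pi_{n-1}(\map_*(X,F))
$$
shows $\mathrm{ev}_*$ is injective, and the identification $\pi_n(\map(X,F)) \cong \pi_n(F)$ follows because the constant-map section splits the evaluation, making $\mathrm{ev}_*$ surjective as well.

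The main obstacle, and the point requiring care, is the boundary-degree bookkeeping: part (2) only guarantees vanishing of $\pi_{k-1}(\map_*(X,F))$ for $k - 1 \geq n$, so the isomorphism $\pi_n(\map(X,F)) \cong \pi_n(F)$ in (1) is not purely formal and genuinely uses the splitting provided by the constant section to rule out a possible cokernel contribution from $\pi_{n-1}(\map_*(X,F))$. I would therefore verify explicitly that evaluation at the basepoint admits a section (the inclusion of constant maps), which both lives over the correct component and gives the right-inverse forcing surjectivity of $\mathrm{ev}_*$ in degree $n$.
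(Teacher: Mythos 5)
Your argument computes homotopy groups only at the component of the constant map, and that is a genuine gap: both statements of Proposition~\ref{prop Jesper} are about the \emph{whole} mapping space, i.e.\ about every component. The adjunction isomorphism $\pi_k(\map_*(X,F),f)\cong[\Sigma^k X,F]_*$ on which your part (2) rests holds only when the basepoint $f$ is the constant map; for a non-constant $f$, an element of $\pi_k(\map_*(X,F),f)$ is a homotopy class of maps $S^k\times X\to F$ restricting to $f$ on $\{s_0\}\times X$, and such a map does not factor through $\Sigma^k X$. Likewise, in part (1) the section given by constant maps lands in the constant component, so your surjectivity argument for $\mathrm{ev}_*$ in degree $n$ says nothing about the other components. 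This is not a bookkeeping issue: homotopy groups of distinct components genuinely differ. Take $X=B\Z/2$ and $F$ the second Postnikov section of $BO(2)$, so that $\pi_1F\cong\Z/2$ acts on $\pi_2F\cong\Z$ by sign. The constant component of $\map(X,F)$ has $\pi_2\cong\Z$ (your computation), but the component of the map $f\colon B\Z/2=BO(1)\to F$ has $\pi_2\cong H^0(B\Z/2;\Z_w)=\Z^{\Z/2}=0$, the invariants of the twisted coefficient system. So no computation made at the constant map can be transported to the remaining components, and even the identification $\pi_n(\map(X,F))\cong\pi_n(F)$ fails off the constant component in general.

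The missing ingredient is precisely what the paper's proof supplies: it inducts on the Postnikov tower of $F$, uses the Thom--Federer splitting for the constant component, and then invokes M{\o}ller's theorem on spaces of lifts for Eilenberg-Mac Lane fibrations to control \emph{every} component $\map(X,F)_k$, where local (twisted) coefficients necessarily enter. This matters for how the proposition is used later: Corollary~\ref{cor Postnikov aut} applies it to the components of $\map(F,F)$ consisting of homotopy equivalences --- in particular the identity component, which is never the constant one --- in order to truncate $B\aut(F)$ and $B\aut_*(F)$ for Theorem~\ref{thm splitting}. Your approach can be repaired to give the vanishing statements at every basepoint (run obstruction theory over the relative CW pair $(S^k\times X,\, S^k\vee X)$, or use the Federer spectral sequence, with coefficients twisted by $f$: the relevant groups are $\tilde H^s(X;\pi_{s+k}F)$ with $s\geq 1$, which vanish for $k>n$, resp.\ $k\geq n$), but as written your proof establishes the proposition only for one component, which is exactly the part of the statement that the paper does not need M{\o}ller's theorem for.
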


\begin{proof}
We proceed by induction on the number of non-trivial homotopy
groups of $F$. When $F$ is a $K(G, 1)$, $\map(X, F) \simeq
\map(X[1], F)$ and any component has the homotopy type of the
classifying space of a centralizer. Likewise $\map_*(X, K(G, 1))$
is homotopically discrete, the components being in bijection with
homomorphisms $\pi_1 X \rightarrow G$.

Suppose now that $n \geq 2$, write $A = \pi_n F$, and consider the
fibration $K(A, n) \rightarrow F \xrightarrow{p} F[n-1]$. Let us
fix a map $k: X \rightarrow F$. We analyze one component of the
mapping spaces at a time. Observe first that both claims are true
for the component of the constant map since we have a fibration
$\map(X, K(A, n)) \rightarrow \map(X,F) \rightarrow
\map(X,F[n-1])$ and can conclude by the classical result of Thom,
\cite{MR0089408}, and Federer, \cite{MR0079265}, that $\map(X,
K(A, n))$ splits as a product of Eilenberg-Mac Lane spaces $\prod
K(H^i(X; A), n-i)$. M\o ller has proved in \cite{MR910659} a
relative version of this lemma which will allow us to understand
the other components as well.

Let $\bar k$ denote the composite $X \rightarrow F \rightarrow
F[n-1]$. The $(n-1)$-st Postnikov section induces a fibration
$\map(X, F)_k \rightarrow \map(X, F[n-1])_{\bar k}$, the fiber of
which is  $F(X, \emptyset; F, F[n-1])_{\bar k}$, i.e. the space of
all lifts $f: X \rightarrow F$ such that $p \circ f = \bar k$.
This space of lifts is a product of Eilenberg-Mac Lane spaces by
\cite[Theorem~3.1]{MR910659} just like in the case of the
component of the trivial map. In particular the highest
non-trivial homotopy group of $\map(X, F)_k$ is the $n$-th,
isomorphic to~$A$.

In the case of pointed mapping spaces the space of lifts $F(X, *;
F, F[n-1])_{\bar k}$ appears in a similar argument. Its highest
non-trivial homotopy group is the $(n-1)$-st one.
\end{proof}

\begin{remark}
{\rm The proof of Proposition \ref{prop Jesper} also shows that if
$X$ is $n$-connected then $\map_*(X,F)$ is contractible.}
\end{remark}

When $F = K(G, 1)$ has a single homotopy group, it is well-known
that $\aut(F)$ is the semi-direct product $K(G, 1) \ltimes
\Aut(G)$ and $\aut_*(F) \simeq \Aut(G)$. All higher homotopy
groups of the topological monoid of self-equivalences are trivial.
The following corollary generalizes this observation.

\begin{corollary}
\label{cor Postnikov aut}
Let $n \geq 1$ and $F \simeq F[n]$ be a connected Postnikov piece.
Then
\begin{itemize}
\item[(1)] $B\aut(F) \simeq (B\aut(F))[n+1]$ and $\pi_{n+1}(B\aut(F))=\pi_n(F)$,

\item[(2)] $B\aut_*(F) \simeq (B\aut_*(F))[n]$.
\end{itemize}
\end{corollary}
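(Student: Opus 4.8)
The plan is to deduce the statement directly from Proposition~\ref{prop Jesper}, combined with the elementary relationship between a grouplike topological monoid and its classifying space. Recall that both $\aut(F)$ and $\aut_*(F)$ are grouplike, since self-equivalences are invertible up to homotopy; hence for $M = \aut(F)$ or $M = \aut_*(F)$ one has $M \simeq \Omega BM$, and therefore $\pi_{k+1}(BM) \cong \pi_k(M)$ for every $k \geq 0$ (the case $k=0$ recording $\pi_1(BM) \cong \pi_0(M)$, the group of homotopy classes of self-equivalences). In particular $BM$ is automatically connected, so no truncation is lost at the bottom of the range.

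First I would identify $\aut(F)$ as the union of those path components of $\map(F, F)$ that contain self-equivalences. Applying Proposition~\ref{prop Jesper}(1) with $X = F$, each such component is an $n$-Postnikov piece whose highest non-trivial homotopy group is the $n$-th, isomorphic to $\pi_n(F)$. Consequently $\pi_k(\aut(F)) = 0$ for $k > n$, while $\pi_n(\aut(F)) \cong \pi_n(F)$. Shifting degrees through $\aut(F) \simeq \Omega B\aut(F)$ then yields $\pi_{k+1}(B\aut(F)) = 0$ for $k+1 > n+1$ and $\pi_{n+1}(B\aut(F)) \cong \pi_n(F)$, which is exactly assertion~(1).

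The second assertion is entirely parallel. Here $\aut_*(F)$ is a union of components of $\map_*(F, F)$, and Proposition~\ref{prop Jesper}(2) with $X = F$ shows each such component is an $(n-1)$-Postnikov piece. Thus $\pi_k(\aut_*(F)) = 0$ for $k \geq n$, and the looping relation $\aut_*(F) \simeq \Omega B\aut_*(F)$ gives $\pi_{k+1}(B\aut_*(F)) = 0$ once $k+1 > n$, establishing $B\aut_*(F) \simeq (B\aut_*(F))[n]$.

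There is essentially no hard analytic step here, so the work is really bookkeeping. The one point requiring a little care is the passage from the \emph{component-wise} Postnikov truncation of the mapping spaces---which is what the proof of Proposition~\ref{prop Jesper} actually delivers---to the corresponding statement for the full monoids $\aut(F)$ and $\aut_*(F)$, and the verification that the degree shift $\pi_{k+1}(BM) \cong \pi_k(M)$ simultaneously accounts for the top homotopy group and the identification $\pi_1(BM) \cong \pi_0(M)$.
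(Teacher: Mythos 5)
Your proof is correct and follows the same route as the paper: identify $\aut(F)$ (resp.\ $\aut_*(F)$) as the union of the components of $\map(F,F)$ (resp.\ $\map_*(F,F)$) consisting of homotopy equivalences, apply Proposition~\ref{prop Jesper} with $X=F$, and shift degrees by one upon passing to the classifying space. The paper's proof is just a terser version of this, leaving the grouplike degree-shift $\pi_{k+1}(BM)\cong\pi_k(M)$ implicit.
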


\begin{proof}
The components of the monoid $\aut(F)$ are precisely the
components in $\map(F, F)$ of maps which are homotopy
equivalences. We conclude by Proposition~\ref{prop Jesper}.
\end{proof}

This statement is very much related to the work of Dwyer, Kan, and
Smith, \cite{MR974710}, where they provide classifying spaces for
towers of fibrations with prescribed fibers. In our case these
fibers would be $K(\pi_1 F, 1), \ldots, K(\pi_n F, n)$.

Let us now come back to our fibration $F \rightarrow E \rightarrow
B$. We ask when the total space $E$ splits as a product $B \times
F$.

\begin{theorem}
\label{thm splitting}
Let $F \rightarrow E \rightarrow B$ be a fibration of connected
spaces and assume that $F \simeq F[n]$.
\begin{itemize}
\item[(1)] If $B$ is $n$-connected, then $E \simeq B \times F$ if and only if
 the connecting morphism $\pi_{n+1}(B)\rightarrow \pi_n(F)$ is trivial.

\item[(2)] If the fibration has a section $s: B \rightarrow E$ and $B$ is
$(n-1)$-connected, then $E \simeq B \times F$ if and only if the
morphism $\pi_n(B)\rightarrow \pi_n(B\aut_*(F))$ is trivial.
\end{itemize}
\end{theorem}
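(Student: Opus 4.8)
The plan is to reduce both statements to the vanishing of the classifying map of the fibration, exploiting the truncation estimates of Corollary~\ref{cor Postnikov aut}. Recall the fibration $F\rightarrow E\rightarrow B$ is classified by a map $\phi\colon B\rightarrow B\aut(F)$, with $E$ obtained as the pullback of the universal fibration $F\rightarrow B\aut_*(F)\rightarrow B\aut(F)$. For part~(1), Corollary~\ref{cor Postnikov aut}(1) gives $B\aut(F)\simeq(B\aut(F))[n+1]$ with $\pi_{n+1}(B\aut(F))\cong\pi_n(F)$. Since $B$ is $n$-connected, the composite of $\phi$ with the $n$-th Postnikov section is null (as $H^k(B;-)=0$ for $k\leq n$), so $\phi$ lifts, uniquely up to homotopy, to the $n$-connected cover $(B\aut(F))\langle n\rangle$, which by the truncation is the Eilenberg--Mac Lane space $K(\pi_n(F),n+1)$. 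Thus $\phi$ is detected by a single class in $H^{n+1}(B;\pi_n(F))$; the Hurewicz theorem and universal coefficients (valid because $B$ is $n$-connected) identify this group with $\Hom(\pi_{n+1}(B),\pi_n(F))$, and the class with $\phi_*$ on $\pi_{n+1}$. Under the isomorphism $\pi_{n+1}(B\aut(F))\cong\pi_n(F)$, realized by the connecting morphism of the universal fibration, this is exactly the connecting morphism $\pi_{n+1}(B)\rightarrow\pi_n(F)$ of $F\rightarrow E\rightarrow B$. Hence the connecting morphism vanishes precisely when $\phi\simeq*$, i.e. when the fibration is trivial, giving $E\simeq B\times F$.

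For part~(2) I would use that a section $s$ reduces the structure to pointed self-equivalences: the sectioned fibration is classified by a lift $\psi\colon B\rightarrow B\aut_*(F)$ of $\phi$ along the universal fibration, concretely $\psi=\hat\phi\circ s$ where $\hat\phi\colon E\rightarrow B\aut_*(F)$ is the canonical map out of the pullback square. By Corollary~\ref{cor Postnikov aut}(2), $B\aut_*(F)\simeq(B\aut_*(F))[n]$. As $B$ is now only $(n-1)$-connected, the same lifting argument, applied to the $(n-1)$-connected cover $K(\pi_n(B\aut_*(F)),n)$ and the identification $H^n(B;\pi_n(B\aut_*(F)))\cong\Hom(\pi_n(B),\pi_n(B\aut_*(F)))$, shows that $\psi$ is detected up to homotopy by $\psi_*$ on $\pi_n$, which is the named morphism $\pi_n(B)\rightarrow\pi_n(B\aut_*(F))$. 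So this morphism vanishes iff $\psi\simeq*$, iff the sectioned fibration is trivial, yielding $E\simeq B\times F$.

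These forward implications give the ``if'' direction in both parts; for the converse I would invoke the long exact homotopy sequence. In part~(1) the $n$-connectivity of $B$ forces $\pi_n(E)\cong\coker(\partial)$, so $E\simeq B\times F$ (hence $\pi_n(E)\cong\pi_n(F)$) constrains $\im(\partial)$; in part~(2) the section splits the sequence. I expect the main obstacle to be exactly this converse at the level of spaces: a bare homotopy equivalence $E\simeq B\times F$ need not respect the fibration, and in part~(2) the section already makes $\pi_*(E)\cong\pi_*(B)\oplus\pi_*(F)$ abstractly, so the homotopy groups do not by themselves detect the splitting. The cleanest resolution is to interpret ``$E\simeq B\times F$'' as triviality of the (sectioned) fibration, making the equivalence with nullity of the classifying map tautological; failing that, one feeds in the finiteness of $\pi_n(F)$ --- as holds for the application $F=\map_*(B\Z/p,BX)_c\simeq BV$ of Corollary~\ref{cor acyclicmap} --- to deduce $\im(\partial)=0$ from $\pi_n(F)/\im(\partial)\cong\pi_n(F)$.
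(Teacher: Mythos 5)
Your argument is correct and rests on the same foundations as the paper's --- the classifying map $\phi\colon B\rightarrow B\aut(F)$, its lift to $B\aut_*(F)$ in the presence of a section, and the truncation statements of Corollary~\ref{cor Postnikov aut} --- but the reduction to a single homotopy-group condition is carried out differently. The paper works on the source: it restricts the classifying map to the cover $B\langle n+1\rangle$ (respectively $B\langle n\rangle$ in part~(2)), where it is null by connectivity, and then applies the Zabrodsky Lemma to the fibration $B\langle n+1\rangle\rightarrow B\rightarrow K(\pi_{n+1}(B),n+1)$ to factor the classifying map through the Eilenberg-Mac Lane base. You work on the target, lifting $\phi$ through the fibre $K(\pi_n(F),n+1)$ of the truncation $B\aut(F)\rightarrow(B\aut(F))[n]$ and identifying $[B,K(\pi_n(F),n+1)]\cong\Hom(\pi_{n+1}(B),\pi_n(F))$ by Hurewicz and universal coefficients. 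These are mirror-image factorizations and both are sound; yours is classical obstruction theory, while the paper's matches the localization toolkit it uses elsewhere. Two remarks. First, $B\aut(F)$ need not be a simple space, so your justification that the composite $B\rightarrow(B\aut(F))[n]$ is null ``because $H^k(B;-)=0$ for $k\leq n$'' should be shored up, either by passing to universal covers (legitimate since $B$ is simply connected) or by quoting the Remark following Proposition~\ref{prop Jesper}, which gives contractibility of $\map_*(B,(B\aut(F))[n])$ outright; the paper invokes that Remark for exactly this purpose. Second, you are right --- and more explicit than the paper --- that the ``only if'' direction requires reading $E\simeq B\times F$ as triviality of the (sectioned) fibration: the paper's proof, like yours, establishes precisely ``classifying map null if and only if the stated morphism vanishes'' and leaves that interpretation implicit; since the later applications (Corollaries~\ref{cor splitting n=1} and~\ref{cor splitting}) only use the direction ``morphism trivial implies splitting'', nothing downstream is affected.
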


\begin{proof}
The fibration is classified by a map $f: B \rightarrow B\aut(F)$
and $\pi_i B\aut(F) = 0$ for $i > n+1$ by Corollary~\ref{cor
Postnikov aut}. Part (1) when $B$ is $(n+1)$-connected is thus a
direct consequence of the first part of this corollary. Now assume
that $B$ is $n$-connected, consider the pullback diagram
\[
\diagram
F \dto \ar@{=}[r] & F\dto & \\
E' \dto \rto & E \dto \rto & K(\pi_{n+1}(B), n+1) \ar@{=}[d]\\
B\langle n+1 \rangle \rto^i & B \rto & K(\pi_{n+1}(B), n+1)
\enddiagram
\]
where the left vertical fibration splits since the base now is
$(n+1)$-connected. That means that $f$ restricted to $B\langle n+1
\rangle$ is null-homotopic. Since $\map_*(B\langle n+1 \rangle,
B\aut(F))_c$ is contractible (Proposition \ref{prop Jesper}),
applying the Zabrodsky Lemma, \cite[Proposition~3.4]{MR97i:55028},
we deduce that $f$ factors through $K(\pi_{n+1}(B), n+1)$.
Therefore, to show that $f$ is null-homotopic, we only need to
prove that the induced map on homotopy groups $\pi_{n+1}(B)
\rightarrow \pi_{n+1} (B\aut(F))\cong \pi_n(F)$ is trivial. The
naturality of the long exact sequence on homotopy groups shows
that this morphism is the connecting morphism for the fibration $F
\rightarrow E \rightarrow B$.

To prove (2) we use the fact that $f$ factors actually through a
map $B \rightarrow B\aut_*(F)$ if the fibration has a section. We
first assume that $B$ is $n$-connected. In this case, from the
connectivity assumption on $B$ and part (2) in Corollary~\ref{cor
Postnikov aut} we see that $f$ is null-homotopic. If $B$ is
$(n-1)$-connected, consider the fibration $B\langle n \rangle
\rightarrow B \rightarrow K(\pi_n(B), n)$. The same argument as
above shows that $f$ factors through $K(\pi_n(B), n)$. Therefore,
$f$ is null-homotopic if and only if the induced map on homotopy
groups $\pi_n(B) \rightarrow \pi_n (B\aut_*(F))$ is trivial.
\end{proof}

\begin{corollary}
\label{cor splitting n=1}
Let $K(G,1) \rightarrow E \rightarrow B$ be a fibration of
connected spaces with a section where $G$ is a discrete group.
Then $E\simeq K(G,1)\times B$ if and only if the induced action
$\pi_1(B)\rightarrow \Aut(G)$ is trivial.
\end{corollary}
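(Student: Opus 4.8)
The plan is to obtain this corollary as the case $F = K(G,1)$ of Theorem~\ref{thm splitting}(2). Here $K(G,1)$ is a connected Postnikov piece with $K(G,1) \simeq K(G,1)[1]$, so we are in the situation $n = 1$. The connectivity hypothesis that $B$ be $(n-1)$-connected then reduces to $B$ being connected, which holds by assumption, and the required section is part of the data. Theorem~\ref{thm splitting}(2) therefore yields that $E \simeq K(G,1) \times B$ if and only if the morphism $\pi_1(B) \rightarrow \pi_1(B\aut_*(F))$ induced by the (sectioned) classifying map is trivial. The whole task thus comes down to identifying this morphism with the action of the statement.

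For the codomain I would invoke the description recalled just before Corollary~\ref{cor Postnikov aut}: for $F = K(G,1)$ one has $\aut_*(F) \simeq \Aut(G)$, the discrete automorphism group, since based self-maps of $K(G,1)$ are classified up to based homotopy by homomorphisms $G \rightarrow G$ and the equivalences by $\Aut(G)$, while the pointed mapping space is homotopically discrete by Proposition~\ref{prop Jesper}. Consequently $B\aut_*(F) \simeq K(\Aut(G),1)$ and $\pi_1(B\aut_*(F)) \cong \Aut(G)$, matching the target group in the statement. Note that this identification is valid for arbitrary, possibly nonabelian, $G$.

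The one point deserving genuine care is the identification of $\pi_1(B) \rightarrow \pi_1(B\aut_*(F)) \cong \Aut(G)$ with the monodromy action $\pi_1(B) \rightarrow \Aut(G)$ on $\pi_1(F) = G$. The key is that a section is present: it provides a compatible basepoint in each fiber, so that the holonomy of the fibration acts on the pointed fiber and hence lands in $\Aut(G)$ rather than merely in $\Out(G)$. Concretely, since the fibration is sectioned its classifying map factors through $B\aut_*(F)$, and on fundamental groups this factored map is by construction precisely the based holonomy representation. Granting this, the triviality criterion of Theorem~\ref{thm splitting}(2) becomes exactly the triviality of the induced action $\pi_1(B) \rightarrow \Aut(G)$, giving both implications at once. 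I expect this last bookkeeping step, matching the abstract classifying map with the elementary monodromy, to be the only real content of the argument.
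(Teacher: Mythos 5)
Your proposal is correct and takes essentially the same route as the paper: its proof is precisely the one-line observation that $\aut_*(K(G,1)) \simeq \Aut(G)$, after which Theorem~\ref{thm splitting}(2) with $n=1$ gives the statement. Your extra discussion identifying the sectioned classifying map on $\pi_1$ with the monodromy action $\pi_1(B)\rightarrow\Aut(G)$ simply makes explicit what the paper leaves implicit.
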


\begin{proof}
Note that $\aut_*(K(G,1)) \simeq \Aut(G)$.
\end{proof}

\begin{corollary}
\label{cor splitting}
Let $n \geq 0$, $A$ be a connected space, and $B$ be an
$n$-connected space such that $\Omega^{n+1} B$ is $A$-local. Then
the homotopy groups of $\map_*(A,B)_c$ are concentrated in degrees
from $1$ to $n$ and $\map(A, B)_c \simeq \map_*(A, B)_c \times B$.
\end{corollary}

\begin{proof}
Let us consider the evaluation fibration $\map_*(A, B)_c
\rightarrow \map(A, B)_c \rightarrow B$. It has always a section,
given by the constant maps. Since $\Omega^{n+1} B$ is $A$-local,
we see that $\Omega^{n+1} \map_*(A, B)_c$, being weakly equivalent
to $\map_*(A, \Omega^{n+1} B)$, is contractible. Therefore the
homotopy groups of $\map_*(A, B)_c$ in degree $>n$ are all
trivial. Part (2) of Theorem~\ref{thm splitting} applies.
\end{proof}

\begin{remark}
\label{splitting}
{\rm The sharpness of the connectivity assumption in
Theorem~\ref{thm splitting} is illustrated by the following
non-trivial fibrations. For part (1), consider the fibration
$K(\Z, n) \rightarrow S^{n+1}\langle n+1 \rangle \rightarrow
S^{n+1}$. The base space is only $n$-connected, and the fibration
is not trivial, as it is classified by a non-trivial map $S^{n+1}
\rightarrow K(\Z, n+1)$. In this example the connecting morphism
$\pi_{n+1}(S^{n+1})\rightarrow \pi_n(K(\Z,n))$ is an isomorphism.

For part (2), let $p$ be any prime, $n \geq 2$ and $F$ be the
product $K(\Z/p,1)\times K(\Z/p, n)$. Since $F$ is an $H$-space,
the identity component $\map_*(F,F)_{id}$ is weakly equivalent to
the component of the constant map, i.e. is a product of
Eilenberg-MacLane spaces, one of them being $K(\Z/p, n-1)$.
Therefore $\pi_n(B\aut_*(F)) \cong \Z/p$ and there exists a map
$S^n \rightarrow B\aut_*(F)$ classifying a split fibration over
$S^n$ with fiber $F\simeq F[n]$, which is not trivial. In this
example $\pi_n(S^n)\cong \Z \rightarrow \pi_n(B\aut_*(F)) \cong
\Z/p$ is the projection.}
\end{remark}

We apply now the results of this section to analyze certain
mapping spaces. This allows us in particular to understand the
space $\map(B\Z/p,BX)_c$ for any $p$-Noetherian group~$X$.

\begin{proposition}
\label{prop general nonconnectedsplitting}
Let $Z$ be a space such that $\Omega^2 Z$ is $B\Z/p$-local. Then
the component of the mapping space $\map(B\Z/p,Z)_c$ splits as a
product $Z \times \map_*(B\Z/p,Z)_c$ and $\map_*(B\Z/p,Z)_c$ is
the classifying space of an elementary abelian $p$-group (not
necessarily finite).
\end{proposition}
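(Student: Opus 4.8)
The plan is to apply Corollary~\ref{cor splitting} with $A = B\Z/p$, $B = Z$, and $n=1$. The statement of that corollary requires $B$ to be $n$-connected with $\Omega^{n+1}B$ being $A$-local. Here the hypothesis gives us that $\Omega^2 Z$ is $B\Z/p$-local, which is exactly the condition $\Omega^{n+1}Z$ being $A$-local for $n=1$. However, Corollary~\ref{cor splitting} also demands that $Z$ be $1$-connected, and this is \emph{not} assumed in the present proposition --- so the first thing I must do is explain why the connectivity hypothesis can be dropped here, or rather why the splitting argument still goes through on the component of the constant map.

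First I would observe that we work throughout with the component $\map(B\Z/p, Z)_c$ of the constant (null-homotopic) map, and correspondingly with $\map_*(B\Z/p, Z)_c$. The evaluation fibration $\map_*(B\Z/p, Z)_c \rightarrow \map(B\Z/p, Z)_c \rightarrow Z$ always admits a section given by the constant maps, exactly as in the proof of Corollary~\ref{cor splitting}. The key point is that restricting to these basepoint components effectively replaces $Z$ by its universal-cover-like data relevant to the fibre: since $\Omega^2 Z \simeq \Omega^2 Z_c$ depends only on the component structure above degree $1$, the $B\Z/p$-locality of $\Omega^2 Z$ still forces $\Omega^2 \map_*(B\Z/p, Z)_c \simeq \map_*(B\Z/p, \Omega^2 Z)_c$ to be contractible. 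Therefore the homotopy groups of $\map_*(B\Z/p, Z)_c$ vanish in degrees $\geq 2$, so it is a Postnikov piece with $F \simeq F[1]$, and moreover $\pi_1$ of this pointed mapping space is an elementary abelian $p$-group because $[\Sigma B\Z/p, Z]_* \cong [B\Z/p, \Omega Z]_*$ maps into something killed by $p$ (concretely, $\pi_1\map_*(B\Z/p, Z)_c = [\Sigma B\Z/p, Z_c]$, and the torsion in $\pi_1(B\Z/p)$ is $\Z/p$, forcing the group to be an $\F_p$-vector space). Hence $\map_*(B\Z/p, Z)_c \simeq K(V, 1)$ for some elementary abelian $p$-group $V$, possibly infinite since $B\Z/p$ is infinite-dimensional and no finiteness on $Z$ is imposed.

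With $F = \map_*(B\Z/p, Z)_c \simeq K(V,1)$ identified as a $K(\pi,1)$ with $\pi$ elementary abelian, the splitting itself follows from Corollary~\ref{cor splitting n=1}: the fibration $K(V,1) \rightarrow \map(B\Z/p, Z)_c \rightarrow Z$ has a section, so it splits as a product if and only if the induced action $\pi_1(Z) \rightarrow \Aut(V)$ is trivial. The main obstacle I anticipate is precisely verifying this triviality, since $Z$ is not assumed simply connected and so $\pi_1(Z)$ may act a priori. I expect this action to be trivial because the fibre $\map_*(B\Z/p, Z)_c$ sits inside the mapping space functorially and the monodromy is controlled by the loop-space data $\Omega Z$, on which $\pi_1(Z)$ acts through inner automorphisms that become trivial after passing to the relevant homotopy. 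More carefully, the action of $\pi_1(Z)$ on $V = \pi_1\map_*(B\Z/p, Z)_c$ factors through the action on $\pi_2 Z$-type information which, by the $B\Z/p$-locality of $\Omega^2 Z$ (equivalently by the Postnikov-piece structure from Proposition~\ref{prop Jesper} and Corollary~\ref{cor Postnikov aut}), is seen to be trivial. I would make this precise by tracking the classifying map $Z \rightarrow B\aut_*(K(V,1)) \simeq B\Aut(V)$ and showing it is null on $\pi_1$ using that the whole construction is natural and that the relevant self-equivalences act trivially on the one nonzero homotopy group coming from $\map_*(B\Z/p, -)$ applied to an infinite loop-like piece. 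Once the action is shown trivial, Corollary~\ref{cor splitting n=1} delivers the product splitting and the proof is complete.
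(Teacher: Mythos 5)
Your overall skeleton agrees with the paper's: both arguments identify $\map_*(B\Z/p,Z)_c$ as a $K(V,1)$ (your observation that $\Omega^2\map_*(B\Z/p,Z)_c \simeq \map_*(B\Z/p,\Omega^2 Z) \simeq *$ is correct and is exactly how the paper proves Corollary~\ref{cor splitting}), and both reduce the splitting to Corollary~\ref{cor splitting n=1}, i.e.\ to showing the monodromy action $\pi_1(Z)\rightarrow \Aut(V)$ is trivial. But the two substantive claims --- that $V$ is an elementary abelian $p$-group, and above all that the action is trivial --- are precisely where you stop proving and start hoping (``I expect this action to be trivial\dots'', ``I would make this precise by\dots''). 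The paper gets both from a structural input that never appears in your proposal: by Bousfield's theorem \cite[Theorem~7.2]{B2}, the homotopy fiber of the nullification map $Z \rightarrow P_{\Sigma B\Z/p}Z$ is a single Eilenberg-Mac Lane space $K(P,2)$ with $P$ abelian $p$-torsion, and this fibration is \emph{principal}. Mapping $B\Z/p$ into it gives $\map_*(B\Z/p,Z)_c \simeq \map_*(B\Z/p,K(P,2))_c = B\Hom(\Z/p,P)$, which is why $V=\Hom(\Z/p,P)$ is elementary abelian (your alternative argument, that torsion in $\pi_1(B\Z/p)$ ``forces'' $[\Sigma B\Z/p,Z]_*$ to be an $\F_p$-vector space, is not a proof: it is not even clear a priori that this group is abelian). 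Principality is then what kills the action: a principal fibration has trivial $\pi_1$-monodromy on the homotopy of its fiber, and the paper's diagram compares the horizontal (nullification-induced) fibrations of mapping spaces with the vertical evaluation fibration to transfer this triviality to the action you need.

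The gap is genuine, not cosmetic, because your proposed substitute for this input cannot work. The claim that the action of $\pi_1(Z)$ ``factors through the action on $\pi_2 Z$-type information which, by the $B\Z/p$-locality of $\Omega^2 Z$, is seen to be trivial'' is false: locality of $\Omega^2 Z$ is a condition on mapping spaces out of $B\Z/p$ and places no constraint on how $\pi_1(Z)$ acts on $\pi_2(Z)$ or on $V$. Concretely, for $p$ odd let $Z$ be the Borel construction $\bigl(K(\Z/p,2)\times E\Z/(p-1)\bigr)/(\Z/(p-1))$ for a faithful action $\Z/(p-1)\hookrightarrow\Aut(\Z/p)$. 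Then $\Omega^2 Z$ is the discrete space $\Z/p$, hence $B\Z/p$-local, yet $\pi_1(Z)=\Z/(p-1)$ acts on $V=\Hom(\Z/p,\pi_2 Z)\cong\Z/p$ by multiplication by a primitive root; correspondingly $\pi_1\map(B\Z/p,Z)_c\cong \Z/p\rtimes\Z/(p-1)$ is non-abelian, so it cannot contain $Z$ as a product factor. Thus no formal or naturality argument from the bare locality hypothesis, of the kind you sketch via Proposition~\ref{prop Jesper} and Corollary~\ref{cor Postnikov aut}, can yield the triviality of the action; it must come from the principal structure of the nullification fibration (with whatever hypotheses Bousfield's theorem carries, which are satisfied in the paper's applications to classifying spaces). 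In short, the key idea of the paper's proof --- the principal fibration $K(P,2)\rightarrow Z\rightarrow P_{\Sigma B\Z/p}Z$ --- is missing, and the step you yourself identify as the main obstacle is never carried out.
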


\begin{proof}
By the work of Bousfield, \cite[Theorem~7.2]{B2}, the homotopy
fiber of the nullification map $Z\rightarrow P_{\Sigma B\Z/p}Z$ is
a single Eilenberg-Mac Lane space $K(P, 2)$, where $P$ is an
abelian $p$-torsion group. He also shows that the fibration
$K(P,2) \rightarrow  Z \rightarrow   P_{\Sigma B\Z/p}Z$ is
principal. By adjunction, the component $\map_*(B\Z/p, P_{\Sigma
B\Z/p}Z)_c$ is contractible. Therefore $\map_*(B\Z/p, Z)_c \simeq
\map_*(B\Z/p, K(P, 2))_c$, which is the classifying space of the
elementary abelian $p$-group $W=\Hom(\Z/p, P)$.

Now, by Corollary \ref{cor splitting n=1}, we only need to check
that the action of $\pi=\pi_1(Z)$ on $W$ is trivial. By taking
mapping spaces at the component of the constant map and
evaluation, we obtain the following diagram of fibrations (since
$\map_*(B\Z/p, P_{\Sigma B\Z/p}Z)_c$ is contractible):
\[
\diagram
 BW \dto \ar@{=}[r] &  BW \dto & \cr
\map(B\Z/p,K(P,2))_c \rto \dto & \map(B\Z/p,Z)_c \dto \rto &
\map(B\Z/p,P_{\Sigma B\Z/p}Z)_c \dto^{\simeq} \cr K(P,2) \rto & Z \rto & P_{\Sigma B\Z/p}Z
\enddiagram
\]
The bottom and middle horizontal fibrations are principal,
therefore the action of the fundamental group of the base space,
$\pi_1 P_{\Sigma B\Z/p}Z \cong \pi$, is trivial on all homotopy
groups of the fiber, in particular on the fundamental group of the
fiber. This action can be seen as conjugation in the fundamental
group of the total space $\map(B\Z/p,Z)_c$, but now it does not
matter whether we look at the vertical fibration or the horizontal
one (in both cases the induced morphism is surjective on the
fundamental group).
\end{proof}

\begin{corollary}
\label{prop nonconnectedsplitting}
Let $X$ be a $p$-Noetherian group. Then the mapping space
$\map(B\Z/p,BX)_c$ splits as a product $BX \times
\map_*(B\Z/p,BX)_c$ where $\map_*(B\Z/p,BX)_c$ is the classifying
space of a finite elementary abelian $p$-group. In particular, the
mapping space $\map(B\Z/p,BX)_c$  is $p$-good, $p$-complete and
$H^*(\map(B\Z/p,BX)_c;\F_p)$ is of finite type.
\end{corollary}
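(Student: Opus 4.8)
The plan is to read this off from Proposition~\ref{prop general nonconnectedsplitting} applied to $Z = BX$, refining its conclusion by means of Corollary~\ref{cor acyclicmap}. First I would verify the hypothesis of that proposition, namely that $\Omega^2 BX$ is $B\Z/p$-local. This was already established in the course of proving Theorem~\ref{structure of BX}: since $(X, BX, e)$ is a $p$-Noetherian group, the module $QH^*(X; \F_p)$ of indecomposables is finite, so by \cite[Theorem~3.2]{MR92b:55004} the loop space $\Omega X \simeq \Omega^2 BX$ is $B\Z/p$-local. Proposition~\ref{prop general nonconnectedsplitting} then yields the splitting
$$
\map(B\Z/p, BX)_c \simeq BX \times \map_*(B\Z/p, BX)_c,
$$
in which $\map_*(B\Z/p, BX)_c$ is the classifying space of an a priori arbitrary elementary abelian $p$-group $W$.

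The next step is to upgrade the factor from elementary abelian to \emph{finite} elementary abelian. This is precisely the content of Corollary~\ref{cor acyclicmap}, which identifies $\map_*(B\Z/p, BX)_c$ as the classifying space of a finite elementary abelian $p$-group, the finiteness being extracted there from the fibration of Theorem~\ref{structure of BX} together with Miller's theorem. Combining the two gives the asserted product decomposition with a finite elementary abelian factor $BW$.

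For the concluding ``in particular'' I would argue factorwise and then invoke the permanence of $p$-completion under finite products. The space $BX$ is $p$-good and $p$-complete by Remark~\ref{p-good} and the definition of a $p$-Noetherian group, while $BW$ is $p$-good and $p$-complete, being the classifying space of a finite $p$-group; hence so is the product $\map(B\Z/p, BX)_c \simeq BX \times BW$. For the finite type statement, the K\"unneth formula reduces the claim to the finiteness in each degree of $H^*(BX; \F_p)$ and of $H^*(BW; \F_p)$. The latter is clear, and the former follows from the Serre spectral sequence of the fibration $K(P, 2)^\wedge_p \rightarrow BX \rightarrow BY$ of Theorem~\ref{structure of BX}, whose base has finitely generated, hence finite type, mod $p$ cohomology by Dwyer--Wilkerson, and whose fibre $K(P, 2)^\wedge_p$ splits into factors $K(\Z/p^k, 2)^\wedge_p$ and $K(\Z^\wedge_p, 3)$, all of finite type.

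I expect no real obstacle here, since the substantive homotopy-theoretic work has already been carried out in Proposition~\ref{prop general nonconnectedsplitting} and Corollary~\ref{cor acyclicmap}, and this corollary merely assembles them. The only point deserving a line of justification is the finite type of $H^*(BX; \F_p)$, which is why I would spell out the spectral sequence argument above rather than treat it as immediate.
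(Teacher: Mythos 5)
Your proposal is correct and follows the paper's own route: the paper's proof is exactly the combination of Proposition~\ref{prop general nonconnectedsplitting} applied to $Z=BX$ (whose hypothesis holds since $QH^*(X;\F_p)$ is finite, as in the proof of Theorem~\ref{structure of BX}) with Corollary~\ref{cor acyclicmap} supplying the finiteness of the elementary abelian group. The paper treats the ``in particular'' clause as immediate from the splitting, so your extra verification of $p$-completeness and of the finite type of $H^*(BX;\F_p)$ via the Serre spectral sequence of $K(P,2)^\wedge_p \rightarrow BX \rightarrow BY$ is just a more explicit rendering of the same argument.
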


\begin{proof}
The finiteness of the elementary abelian $p$-group follows from
Corollary~\ref{cor acyclicmap}.
\end{proof}

In particular, we see that $\map(B\Z/p,BX)_c$ is again the
classifying space of a $p$-Noetherian group.

\section{Indecomposable elements and the Krull filtration}
\label{sec QandU}
As mentioned in the introduction, a good way to understand the
cohomology of a space as an algebra over the Steenrod algebra is
to look at the module of indecomposable elements $QH^*(X; \F_p) =
\tilde H^*(X; \F_p) / \tilde H^*(X; \F_p) \cdot \tilde H^*(X;
\F_p)$. An important observation here is that this definition
depends on the choice of a base point, or more exactly on the
choice of a component $X_0$ if $X$ is not connected. Since
$H^0(X;\F_p)$ is a $p$-Boolean algebra, it follows that $QH^*(X;
\F_p)$ is isomorphic to $QH^*(X_0; \F_p)$.

There is a (Krull) filtration of the category
$\U$ of unstable modules, $\U_0 \subset \U_1 \subset \dots$ such
that $\U_0$ consists in the locally finite unstable module. Schwartz
established in \cite[Theorem~6.2.4]{MR95d:55017} a criterion to
check whether (and where) an unstable module lives in the Krull
filtration, namely $M \in \U_n$ if and only if $\bar T^{n+1} M = 0$,
where $\bar T$ is Lannes' reduced $T$-functor.

Therefore our objective in this section is to prove that $\bar T^2
QH^*(BX; \F_p) = 0$ for a $p$-Noetherian group. To do so, we need
first to find a geometrical interpretation of the reduced
$T$-functor.

Recall that ``under some mild assumptions", $TH^*(Z; \F_p) \cong
H^*(\map (B\Z/p, Z); \F_p)$. Lannes' standard mild assumptions on
$Z$  are that $T H^*(Z; \F_p)$ is of finite type (or
$H^*(\map(B\Z/p, Z); \F_p)$ is of finite type), and that
$\map(B\Z/p, Z)$ is $p$-good,
\cite[Proposition~3.4.4]{MR93j:55019}. We will not need to
understand globally the mapping space, but restrict our attention
to the component $\map (B\Z/p, Z)_c$ of the constant map, the
natural choice of base point in the full mapping space. We thus
only consider the component $T_c(H^*(Z; \F_p))$ of Lannes'
$T$-functor.

\begin{theorem}
\label{thm TQ}
Let $Z$ be a $p$-complete space such that $H^*(Z;\F_p)$ and
$H^*(\map(B\Z/p,Z)_c)$ are of finite type. If $\Omega^2 Z$ is
$B\Z/p$-local, then $$\bar T QH^*(Z; \F_p) \cong QH^*
(\map_*(B\Z/p, Z)_c; \F_p).$$ In particular the unstable module
$QH^*(Z; \F_p)$ lies in $\mathcal U_{1}$.
\end{theorem}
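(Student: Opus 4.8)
The plan is to marry the geometric splitting of Proposition~\ref{prop general nonconnectedsplitting} with the algebraic formalism of Lannes' $T$-functor, using that $T$ is exact and symmetric monoidal in order to make it commute with the indecomposables functor~$Q$.

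First I would check that Lannes' mild assumptions hold, so that the constant-component functor satisfies $T_c H^*(Z;\F_p)\cong H^*(\map(B\Z/p,Z)_c;\F_p)$ as unstable algebras (\cite[Proposition~3.4.4]{MR93j:55019}). Indeed $Z$ is $p$-complete, hence $p$-good; Proposition~\ref{prop general nonconnectedsplitting} splits the component as $Z\times BW$ with $W=\Hom(\Z/p,P)$ elementary abelian, and the finite-type hypothesis on $H^*(\map(B\Z/p,Z)_c)$ forces $W$ to be \emph{finite}, so that $\map(B\Z/p,Z)_c$ is $p$-good of finite type. This identifies $T_cH^*(Z;\F_p)$ with $H^*(Z;\F_p)\otimes H^*(BW;\F_p)$, the distinguished copy of $H^*(Z;\F_p)$ being the image of the evaluation $\mathrm{ev}^*$, the section (corresponding to $*\to B\Z/p$) split by the restriction $c^*$ along constant maps.

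The core of the argument is to apply $Q$ to both descriptions of $T_cH^*(Z)$. Since $T_c$ is exact and monoidal it preserves cokernels and tensor products, and writing $QK=\coker\bigl(\tilde K\otimes\tilde K\xrightarrow{\mu}\tilde K\bigr)$ for a connected unstable algebra~$K$ (together with $\widetilde{T_cK}=T_c\tilde K$) gives a natural isomorphism $Q\circ T_c\cong T_c\circ Q$. Combined with the natural module splitting $T_cM\cong M\oplus\bar TM$ this yields $Q(T_cH^*(Z))\cong QH^*(Z)\oplus\bar T\,QH^*(Z)$. On the other hand, from $T_cH^*(Z)\cong H^*(Z)\otimes H^*(BW)$ and the standard formula $Q(K\otimes L)\cong QK\oplus QL$ for the indecomposables of a tensor product of connected algebras, we get $Q(T_cH^*(Z))\cong QH^*(Z)\oplus QH^*(BW)$. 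In both decompositions the distinguished summand is $\im Q(\mathrm{ev}^*)$, so matching these summands by naturality produces $\bar T\,QH^*(Z;\F_p)\cong QH^*(BW;\F_p)=QH^*(\map_*(B\Z/p,Z)_c;\F_p)$, which is the asserted isomorphism.

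Finally, for the ``in particular'' clause I would use that $W$ is a finite elementary abelian $p$-group, so $QH^*(BW;\F_p)$ is finite, hence locally finite, that is $QH^*(BW;\F_p)\in\U_0$. By Schwartz's criterion \cite[Theorem~6.2.4]{MR95d:55017} this is equivalent to $\bar T\,QH^*(BW)=0$, and therefore $\bar T^2\,QH^*(Z)\cong\bar T\,QH^*(BW)=0$, i.e.\ $QH^*(Z;\F_p)\in\U_1$. The step I expect to be the main obstacle is the central commutation $Q\circ T_c\cong T_c\circ Q$: it rests on the exactness and monoidality of $T$ and on the identification $\widetilde{T_cK}=T_c\tilde K$, and one must verify that the two resulting splittings of $Q(T_cH^*(Z))$ share the same $QH^*(Z)$ summand, so that the complementary summands may be identified naturally.
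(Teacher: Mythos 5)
Your proposal is correct and follows essentially the same route as the paper: the splitting of $\map(B\Z/p,Z)_c$ from Proposition~\ref{prop general nonconnectedsplitting} plus finiteness of $W$ to validate Lannes' hypotheses, the commutation of the $T$-functor with $Q$ (the paper phrases this as $TQ\cong QT$ and then passes to the constant component via the Boolean structure of $TH^0$, which is exactly the degree-zero care you flag as the main obstacle), matching the two direct-sum decompositions of $Q(T_cH^*(Z))$ to extract $\bar T\,QH^*(Z)\cong QH^*(BW)$, and Schwartz's criterion $\bar T^2=0$ for membership in $\U_1$. The only difference is bookkeeping, not substance.
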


\begin{proof}
In Proposition~\ref{prop general nonconnectedsplitting} we
obtained a splitting $\map(B\Z/p,Z)_c \simeq
\map_*(B\Z/p,Z)_c\times Z$ and an equivalence $\map_*(B\Z/p,Z)_c
\simeq BW$ where $W$ is an elementary abelian $p$-group. With the
hypothesis of the theorem this splitting shows that $H^*(BW;\F_p)$
is of finite type and therefore $W$ is finite. Therefore
$\map(B\Z/p, Z)_c$ is $p$-good. Since moreover
$H^*\bigl(\map(B\Z/p, Z)_c; \F_p\bigr)$ is of finite type by
assumption, we can apply Lannes' result
\cite[Proposition~3.4.4]{MR93j:55019} and deduce that the
$T$-functor computes what it should: $T_c H^*(Z; \F_p) \cong H^*
(\map(B\Z/p, Z)_c; \F_p)$.

Notice also that $QH^* (\map(B\Z/p, Z); \F_p) \cong QH^*
(\map(B\Z/p, Z)_c; \F_p)$. Since Lannes' $T$-functor commutes with
taking the module of indecomposable elements, $T Q H^*(Z; \F_p)
\cong QTH^*(Z; \F_p)$. But in degree zero $TH^*(Z; \F_p)$ is a
Boolean algebra, \cite[Section~3.8]{MR95d:55017}, so that
$QTH^*(Z; \F_p) \cong Q\bigl(T_c H^*(Z; \F_p)\bigr)$, which is
isomorphic to $QH^* (\map(B\Z/p, Z)_c; \F_p)$. The splitting
yields next an isomorphism
$$
T Q H^*(Z; \F_p) \cong QH^* (\map_*(B\Z/p, Z)_c; \F_p) \oplus
QH^*(Z; \F_p)
$$
so that we have finally identified $\bar T QH^*(Z; \F_p) \cong
QH^* (\map_*(B\Z/p, Z)_c; \F_p)$. This proves the first part of
the theorem. For the second claim, use the fact that
$\map_*(B\Z/p, Z)_c \simeq BW$, the classifying space of a finite
elementary abelian group. The cohomology of $W$ is finitely
generated as an algebra, so $Q H^*(BW; \F_p)$ is finite and lies
in $\U_0$. Therefore $\bar TQ H^*(BW; \F_p) = 0$, or equivalently
$\bar T^2 QH^*(Z; \F_p) = 0$, and so $QH^*(Z; \F_p)$ lies in
$\mathcal U_{1}$.
\end{proof}

Let us now turn to an even finer analysis of the module of
indecomposable elements. Let us denote by $Q_1$ the unstable
module $QH^*(B\Z/p; \F_p)$ of the cohomology of a cyclic group of
order~$p$. At the prime $p=2$, the unstable module $Q_1$ is
isomorphic to $\Sigma \F_2 = \Sigma F(0)$. At an odd prime $Q_1$
is an unstable module with one generator $t$ in degree~1 and its
Bockstein $\beta t$ in degree~2.

\begin{proposition}
\label{prop schwartz}
Let $Z$ be a $p$-complete space such that $H^*(Z;\F_p)$ and
$H^*(\map(B\Z/p,Z)_c)$ are of finite type. Assume that  $\Omega^2
Z$ is $B\Z/p$-local. Define $Q_1 = QH^*(B\Z/p; \F_p)$. Then there
exists a morphism $QH^*(Z; \F_p) \rightarrow F(1)
\otimes(Q_1^{\oplus k})$ with finite cokernel and locally finite
kernel.
\end{proposition}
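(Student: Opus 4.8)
The plan is to build on the identification from Theorem~\ref{thm TQ}, where we proved $\bar T QH^*(Z; \F_p) \cong QH^*(\map_*(B\Z/p, Z)_c; \F_p)$ and that this component is $BW$ for a finite elementary abelian $p$-group $W = \Hom(\Z/p, P)$, with $P$ the torsion group appearing in the fiber $K(P,2)$ of the nullification. The target module $F(1) \otimes (Q_1^{\oplus k})$ has a transparent meaning: the integer $k$ should be the rank of $W$, so that $Q_1^{\oplus k} = QH^*(BW; \F_p) \cong \bar T QH^*(Z; \F_p)$, and the suspension-like factor $F(1)$ encodes the degree shift coming from the $K(P,2)$ fiber. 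Schwartz's theory relates an unstable module $M$ to its first reduced $T$-functor via a natural comparison map, and the point is that $\bar T$ raises (or rather detects) the ``size'' of $M$ in a controlled way. First I would recall the precise form of Schwartz's description of modules in $\U_1$: an unstable module $M$ lies in $\U_1$ exactly when $\bar T^2 M = 0$, and moreover there is a structural result saying that such $M$ is, up to locally finite pieces, determined by $\bar T M \in \U_0$ together with the $F(1)$-module structure governing how the Steenrod operations act.

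The key step is to produce the morphism explicitly. I would exploit the adjunction/unit of the $T$-functor: for any unstable module $M$ there is a natural map $M \to F(1) \otimes \bar T M$ arising from Schwartz's analysis of $\U_1$ (this is the content of the nilpotent/Krull machinery in \cite[Chapter~6]{MR95d:55017}). Applying this to $M = QH^*(Z;\F_p)$ and substituting $\bar T M \cong Q_1^{\oplus k}$ from Theorem~\ref{thm TQ} gives the desired map $QH^*(Z; \F_p) \to F(1) \otimes (Q_1^{\oplus k})$. The content of the proposition is then entirely in controlling the kernel and cokernel of this comparison map. For the kernel, I would argue that anything killed by the map is annihilated by $\bar T$ (the map is an isomorphism after applying $\bar T$, since $\bar T F(1) \otimes (Q_1^{\oplus k})$ recovers $Q_1^{\oplus k}$ compatibly), hence the kernel lies in $\U_0$, i.e.\ is locally finite. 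For the cokernel, the finiteness should follow from the fact that $QH^*(Z;\F_p)$ is finitely generated as an $\A_p$-module (which holds here because $\bar T^2 = 0$ places it in $\U_1$ and the module is finitely generated over the Steenrod algebra by the running hypotheses), so the comparison map is surjective modulo a finitely generated locally finite piece, which in $\U_1$ forces the cokernel to be finite.

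I expect the main obstacle to be pinning down the precise naturality and the $F(1)$-module structure of the comparison map so that it is genuinely a morphism of unstable modules and not merely of graded vector spaces. Schwartz's characterization of $\U_1$ is stated at the level of the vanishing $\bar T^2 M = 0$, but extracting a canonical $M \to F(1) \otimes \bar T M$ requires the finer structure of the category $\U_1$ as modules over $F(1)$ (via the action of $\Sq{}$ or $\P{}$ raising the $\bar T$-weight). The delicate point is verifying that $F(1) \otimes (Q_1^{\oplus k})$ is the correct target, i.e.\ that the $F(1)$-factor accounts exactly for the degree-$1$ generator coming from $B\Z/p$ while $Q_1^{\oplus k}$ matches $\bar T QH^*(Z;\F_p)$; a degree-counting argument comparing the Poincaré series of both sides, using Example~\ref{ex K(Z,2)} as the rank-one model (where $QH^*(K(\Z\com{p}, 3); \F_2) \cong \Sigma F(1)$), should confirm this.

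Once the comparison map is constructed and its target identified, the kernel/cokernel estimates are formal consequences of the $\U_0$-versus-$\U_1$ dichotomy. I would therefore spend the proof chiefly on the first two steps and treat the finiteness of the cokernel and local finiteness of the kernel as the routine endgame, invoking Theorem~\ref{thm TQ} for the value of $\bar T$ and Schwartz's criterion \cite[Theorem~6.2.4]{MR95d:55017} throughout.
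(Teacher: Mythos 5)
Your outline coincides with the paper's: invoke Schwartz's structure theory for $\U_1$, identify $\bar T QH^*(Z;\F_p) \cong Q_1^{\oplus k}$ via Theorem~\ref{thm TQ}, and read off the kernel and cokernel conditions. But two of your steps have genuine problems. First, the ``natural comparison map'' $M \to F(1) \otimes \bar T M$ you propose does not exist for a general unstable module $M$: the unit of the relevant adjunction is a map $M \to \bar T M \otimes \tilde H^*(B\Z/p;\F_p)$, and $\tilde H^*(B\Z/p;\F_p)$ is strictly larger than $F(1)$ (at $p=2$ it is the ideal generated by $t$ in $\F_2[t]$, of which $F(1)$ is the proper submodule spanned by the classes $t^{2^i}$); there is no natural projection onto an $F(1)$-part. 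What is true --- and this is exactly what the paper cites, namely \cite[Proposition~2.3]{MR2002k:55043}, not the Krull machinery of the book \cite{MR95d:55017} --- is that any $M \in \U_1$ sits in an exact sequence $0 \to K \to M \to F(1)\otimes L \to N \to 0$ with $K$, $L$, $N$ locally finite, with no naturality claimed or needed. Since the proposition only asserts the \emph{existence} of a morphism, your worry about canonicity and $F(1)$-module structure is a red herring: quote this structure theorem, and then $L \cong \bar T M \cong Q_1^{\oplus k}$ follows as you say, using $\bar T F(1) = F(0)$, $\bar T L = 0$, exactness of $\bar T$, and Theorem~\ref{thm TQ}.

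Second, your cokernel argument is incorrect as stated. You claim $QH^*(Z;\F_p)$ is finitely generated over $\A_p$ ``by the running hypotheses'', but the hypotheses here are only finite type plus $B\Z/p$-locality of $\Omega^2 Z$; membership in $\U_1$ does not imply finite generation, and the paper deliberately asserts only a \emph{locally finite} kernel in this proposition precisely because finite generation is unavailable at this stage (it is established later, for $p$-Noetherian groups, in Theorem~\ref{theorem BX and Ap}, which is how Corollary~\ref{cor 4connectedcover} upgrades the kernel from locally finite to finite). Moreover, finite generation of the \emph{source} is not what finiteness of the cokernel requires: the cokernel $N$ is a quotient of the \emph{target} $F(1)\otimes Q_1^{\oplus k}$, which is a finitely generated $\A_p$-module; hence $N$ is finitely generated, and being also locally finite by Schwartz's theorem, it is finite. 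That is the paper's argument, and it uses nothing about $QH^*(Z;\F_p)$ itself beyond its membership in $\U_1$.
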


\begin{proof}
Schwartz characterizes in~\cite[Proposition~2.3]{MR2002k:55043} the
unstable modules $M$ in $\U_1$ as those sitting in an exact sequence
$0 \rightarrow K \rightarrow M \rightarrow F(1) \otimes L
\rightarrow N \rightarrow 0$, where $K, L$, and $N$ are locally
finite (i.e. in $\U_0$). In particular $\bar T M \cong L$ since
$\bar T F(1) = F(0)$ and $T$ commutes with tensor products,
\cite[Theorem~3.5.1]{MR95d:55017}. In our case we know from the
previous theorem that $\bar T QH^*(Z; \F_p) \cong QH^*(BW; \F_p)$
where $W$ is an abelian elementary group, say of rank~$k$. Thus $L =
Q_1^{\oplus k}$. The quotient $N$ of $F(1) \otimes(Q_1^{\oplus k})$
will be finitely generated. As it is locally finite it must be
finite.
\end{proof}

We finally come back to $p$-Noetherian groups and prove that the
module of indecomposable elements $QH^*(BX; \F_p)$ is as small as
expected.

\begin{theorem}
\label{thm TQforpNoetherian}
Let $X$ be a $p$-Noetherian group. Then $$\bar T QH^*(BX; \F_p) \cong
QH^* (\map_*(B\Z/p, BX)_c; \F_p).$$ In particular the unstable module
$QH^*(BX; \F_p)$ lies in $\mathcal U_{1}$.
\end{theorem}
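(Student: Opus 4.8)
The plan is to obtain this statement as an immediate instance of Theorem~\ref{thm TQ} applied to $Z = BX$. The only work is to verify the three hypotheses of that theorem for the classifying space of a $p$-Noetherian group: that $BX$ is $p$-complete, that both $H^*(BX;\F_p)$ and $H^*(\map(B\Z/p,BX)_c;\F_p)$ are of finite type, and that $\Omega^2 BX$ is $B\Z/p$-local.

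The first condition is part of Definition~\ref{def pNoetherian}. For the finite type of $H^*(\map(B\Z/p,BX)_c;\F_p)$, I would simply quote Corollary~\ref{prop nonconnectedsplitting}. To see that $H^*(BX;\F_p)$ is of finite type, I would feed the structure fibration $K(P,2)^\wedge_p \rightarrow BX \rightarrow BY$ of Theorem~\ref{structure of BX} into the Serre spectral sequence: the base cohomology $H^*(BY;\F_p)$ is Noetherian (hence of finite type) by Dwyer and Wilkerson~\cite{MR1274096}, while the fiber cohomology $H^*(K(P,2)^\wedge_p;\F_p)$ is of finite type by the Cartan--Serre computations~\cite{MR0060234, MR0087934}. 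A finite type $E_2$-page converging to $H^*(BX;\F_p)$ then gives the claim.

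The remaining and genuinely homotopy-theoretic input is the $B\Z/p$-locality of $\Omega^2 BX \simeq \Omega X$, and this has in fact already been secured. In the proof of Theorem~\ref{structure of BX} we used that, since $H^*(X;\F_p)$ is a finitely generated algebra, the module $QH^*(X;\F_p)$ is finite, whence $\Omega X$ is $B\Z/p$-local by \cite[Theorem~3.2]{MR92b:55004}. With all three hypotheses verified, Theorem~\ref{thm TQ} delivers verbatim both the isomorphism $\bar T QH^*(BX;\F_p) \cong QH^*(\map_*(B\Z/p,BX)_c;\F_p)$ and the membership $QH^*(BX;\F_p) \in \U_1$. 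I do not expect a real obstacle, since the substantive analysis has been front-loaded into Theorem~\ref{thm TQ} and the structure theory of Section~\ref{sec pNoetherian}; the only mild point of care is confirming that the spectral sequence argument genuinely yields finite type, which is automatic once fiber and base are known to have finite type cohomology.
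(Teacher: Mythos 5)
Your proposal is correct and takes essentially the same route as the paper, whose entire proof is the one line that the assumptions of Theorem~\ref{thm TQ} (applied to $Z=BX$) are satisfied by Corollary~\ref{prop nonconnectedsplitting}. The extra verifications you supply---$p$-completeness from Definition~\ref{def pNoetherian}, finite type of $H^*(BX;\F_p)$ via the Serre spectral sequence of the fibration $K(P,2)^\wedge_p \rightarrow BX \rightarrow BY$ (legitimate, since that fibration is principal so the $\pi_1$-action is trivial and the $E_2$-page is the untwisted tensor product), and the $B\Z/p$-locality of $\Omega X$ already established in the proof of Theorem~\ref{structure of BX}---are precisely the details the paper leaves implicit.
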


\begin{proof}
The assumptions in Theorem~\ref{thm TQ} are satisfied by Corollary
\ref{prop nonconnectedsplitting}.
\end{proof}

\section{Fibrations over spaces with finite cohomology}
\label{sec Serre}
In our study of $H^*(BX; \F_p)$, we have already managed to prove
that $QH^*(BX; \F_p)$ lives in $\U_1$, that is only one stage
higher than where $QH^*(X; \F_p)$ lives. What is left to prove is
that $H^*(BX; \F_p)$ is finitely generated as an algebra over the
Steenrod algebra. Therefore we analyze the fibration $K(P,
2)^\wedge_p \rightarrow BX \rightarrow BY$ of
Theorem~\ref{structure of BX}.

Let $F\rightarrow E \rightarrow B$ be a fibration where both $H^*(B;
\F_p)$ and $H^*(F; \F_p)$ are finitely generated $\A_p$-algebras. In
this situation, we ask whether the same finiteness condition holds
for $H^*(E; \F_p)$. When the fibration is one of $H$-spaces and
$H$-maps we proved in \cite{CCS5} that this is true. But in general
some restrictions have to be imposed, even when the fiber is a
single Eilenberg-Mac Lane space as shown by the following example.

\begin{example}
\label{counter--example}
{\rm Consider the folding map $K(\Z, 3) \vee K(\Z, 3) \rightarrow
K(\Z, 3)$. An easy application of Puppe's theorem \cite{MR51:1808},
shows that the homotopy fiber is $\Sigma \Omega K(\Z, 3) \simeq
\Sigma K(\Z, 2)$. Therefore there exists a fibration
\[
K(\Z, 2) \rightarrow \Sigma K(\Z, 2) \rightarrow K(\Z, 3) \vee K(\Z,
3).
\]
The mod $2$ cohomology of the fiber is finitely generated as an
algebra, the cohomology of the base space is generated over
$\mathcal A_2$ by the two fundamental classes in degree $3$. However
the cohomology of $\Sigma K(\Z, 2)$ is not finitely generated over
$\mathcal A_2$ (as it is a suspension it would be finitely generated
as an unstable module, and therefore would belong to some stage of
the Krull filtration; by Schwartz's solution \cite{MR99j:55019} to
Kuhn's non-realizability conjecture this would imply that the
cohomology were locally finite, which it is not).}
\end{example}

This example indicates that we must impose stronger conditions on
the base space of the fibration to make sure that the cohomology
of the total space is finitely generated as an algebra
over~$\A_p$. In this section we study fibrations $F \rightarrow E
\rightarrow B$ where $\pi_1 B$ acts trivially on the cohomology of
the fiber. We will assume that $H^*(B; \F_p)$ is \emph{finite} and
the fiber $F$ is a finite product of Eilenberg Mac-Lane spaces
$\prod_{i=1}^q K(A_i, n_i)$ where $A_i$ is a finitely generated
abelian group for all~$i$. Both assumptions will play an essential
role in the analysis of the cohomology of the total space. The
finiteness of the base forces the Serre spectral sequence to
collapse at some finite stage and the hypothesis on $A_i$ implies
that the cohomology of $K(A_i, n_i)$ is generated, as an algebra
over the Steenrod algebra $\mathcal A_p$, by a finite number of
fundamental classes $\iota_1, \dots, \iota_m$ of degree $n$, and
possibly certain higher Bockstein on these classes. It is a free
algebra by work of Serre at the prime $2$, \cite{MR0060234}, and
Cartan at odd primes, \cite{MR0087934}.

\begin{lemma}
\label{SSsplitting}
There exists a splitting $H^*\bigl(\prod K(A_i, n_i); \F_p\bigr)
\cong F^* \otimes G^*$ of algebras where $F^*$ is finitely generated
as an algebra, and $G^*$ consists of permanent cycles in the Serre
spectral sequence. Moreover $G^*$ is finitely generated as an
algebra over $\mathcal A_p$.
\end{lemma}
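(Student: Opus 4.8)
The plan is to read the splitting off the behaviour of the algebra generators of $H^*(F;\F_p)$ in the Serre spectral sequence. First I would recall, following Serre and Cartan, that $H^*\bigl(\prod K(A_i,n_i);\F_p\bigr)\cong\bigotimes_i H^*(K(A_i,n_i);\F_p)$ is a free graded-commutative algebra whose generators are the classes $\theta\iota_j$, where the $\iota_j$ run over the fundamental classes (one for each cyclic summand of the $A_i$) and $\theta$ runs over a suitable family of admissible Steenrod operations, together with the higher Bocksteins coming from the summands $\Z/p^r$. In particular the whole algebra is generated over $\A_p$ by the finitely many fundamental classes. Since $H^*(B;\F_p)$ is finite, let $D$ be its top nonzero degree; then no differential $d_r$ with $r>D$ can be nonzero on a fibre class, so the spectral sequence collapses at the finite stage $E_{D+1}$.

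The second step is to control the transgressions. Each fundamental class $\iota_j$ is transgressive (for the fibre $K(P,2)$ of Theorem~\ref{structure of BX} this is immediate, the fibre being $1$-connected; in the general statement one first filters the fibre by its Eilenberg--Mac Lane factors to reduce to transgressive generators), and by Kudo's transgression theorem every generator $\theta\iota_j$ is transgressive as well, with $\tau(\theta\iota_j)=\theta\,\tau(\iota_j)\in H^{*}(B;\F_p)$. The crucial consequence of the finiteness of $H^*(B;\F_p)$ is that only finitely many generators have nonzero transgression: if $\deg(\theta\iota_j)\ge D$ then $\tau(\theta\iota_j)$ lands in $H^{>D}(B;\F_p)=0$, and more precisely the $\theta$ with $\theta\,\tau(\iota_j)\neq0$ are confined to a bounded range of degrees. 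A transgressive generator with vanishing transgression survives to $E_\infty$, so all but finitely many of the generators are permanent cycles. This suggests taking $F^*$ to be the subalgebra generated by the finitely many generators of low degree and $G^*$ the subalgebra generated by the remaining ones; freeness of $H^*(F;\F_p)$ on its generators then yields the tensor decomposition $H^*(F;\F_p)\cong F^*\otimes G^*$ of algebras, with $F^*$ finitely generated as an algebra and $G^*$ consisting of permanent cycles.

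The hard part, and the step I expect to be the main obstacle, is to show that $G^*$ is finitely generated as an algebra over $\A_p$, equivalently that $G^*$ is closed under the Steenrod action with finitely generated module of indecomposables. One knows abstractly that the permanent cycles form the image of the edge homomorphism $H^*(E)\to H^*(F)$, hence an $\A_p$-subalgebra; the issue is to identify this with the concrete tensor factor $G^*$ and prove its finite generation over $\A_p$. This is not formal: the category $\U$ is not locally Noetherian, so one cannot simply invoke that the finite-colength submodule $\ker\bigl(\bar\tau\colon QH^*(F)\to H^{*+1}(B)\bigr)$ of the finitely generated module $QH^*(F)$ is again finitely generated. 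Worse, at the level of algebras the naive degree splitting is not quite stable under $\A_p$, because applying $\Sq{i}$ to a high-degree generator produces, via the Adem relations and the instability relation $\Sq{|x|}x=x^2$, $p$-th-power terms $x_\gamma^p$ whose factors may sit in $F^*$.

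The route I would take to overcome this is to argue one Eilenberg--Mac Lane factor at a time, using the explicit description of $QH^*(K(A_i,n_i);\F_p)$ as a free unstable module on the fundamental classes and their higher Bocksteins. On such a factor one checks, again with Kudo's theorem, that the offending $p$-th-power terms $x_\gamma^p$ are themselves permanent cycles, their transgression-type differentials landing above degree $D$ and hence vanishing; and one exploits the sparse admissible-monomial structure of these free unstable modules to show that the permanent-cycle part is generated over $\A_p$ by its generators in a bounded range of degrees. This gives that $G^*$ is finitely generated over $\A_p$, which is precisely the remaining assertion of the lemma.
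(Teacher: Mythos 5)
Your first two paragraphs essentially reconstruct the paper's own argument: Kudo's transgression theorem together with the finiteness of $H^*(B;\F_p)$ shows that the algebra generators $\mathcal{P}^I x_k$ of degree above a bound $r$ larger than the dimension of the base transgress to zero and hence are permanent cycles, and the freeness of $H^*\bigl(\prod K(A_i,n_i);\F_p\bigr)$ turns the partition into small-degree and large-degree generators into a tensor splitting $F^*\otimes G^*$. The breakdown is in your third paragraph, and it rests on a false premise: the category $\U$ of unstable modules \emph{is} locally Noetherian. This is \cite[Theorem~1.8.1]{MR95d:55017}, and it is exactly the tool the paper invokes for the step you call the main obstacle. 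The inclusion of algebras $G^*\subset H^*(F;\F_p)$ induces on indecomposables an inclusion of unstable modules $QG^*\subset QH^*(F;\F_p)$; since $QH^*(F;\F_p)$ is a finitely generated unstable module, local Noetherianity gives that $QG^*$ is finitely generated, which is the assertion that $G^*$ is a finitely generated $\A_p$-algebra in the form in which it is used later. Having denied yourself this theorem, you substitute a programme (``argue one Eilenberg--Mac Lane factor at a time'', ``exploit the sparse admissible-monomial structure'') that is never carried out; by your own assessment this is the main content of the lemma, so the proposal does not prove it.

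Your intermediate observation that the splitting is not stable under the Steenrod action is, in itself, correct: instability and the Adem relations do produce $p$-th powers of small-degree generators, for instance $Sq^1\bigl(Sq^{16}Sq^8Sq^4Sq^2\iota_3\bigr)=\bigl(Sq^8Sq^4Sq^2\iota_3\bigr)^2$ for the fibre $K(\Z,3)$ at $p=2$. But the right conclusion to draw is not that a delicate factor-by-factor analysis is needed; it is that the finite-generation statement should be proved after passing to indecomposables, where the offending terms vanish because $p$-th powers are decomposable. Concretely, $QG^*$ is just the part of $QH^*(F;\F_p)$ in degrees above the bound, and since Steenrod operations raise degree this is automatically an unstable submodule, to which Schwartz's Noetherianity theorem applies. (A minor further correction: the permanent cycles in the fibre form the image of the edge homomorphism, which is strictly larger than $G^*$ in general --- it also contains the $p$-th powers $z_i$ of small-degree generators used later in Proposition~\ref{fgmodule} --- but the lemma only claims the containment $G^*\subset\{\text{permanent cycles}\}$, so no identification is required.)
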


\begin{proof}
By Kudo's transgression theorem, all classes obtained by applying
Steenrod operations to transgressive operations are transgressive.
Let us choose therefore an integer $r$ larger than the dimension of
the cohomology of the base. If $\{x_1,\ldots, x_k\}$ is a set of
generators of $H^*(F; \F_p)$ as an $\A_p$-algebra, the elements $1
\otimes \mathcal P^I x_k$ are permanent cycles for any sequence $I$
of degree larger than $r-n-1$ and any $k$.

We will say that such generators $\mathcal P^I x_k$ have
\emph{large} degree and the others, of which there is only a
finite number, have small degree. We define now $F^*$ to be the
subalgebra generated by the generators of small degree and $G^*$
by all other large degree generators. Then $H^*(F; \F_p)\cong
F^*\otimes G^*$.

The last claim is proven by looking at the inclusion of algebras
$G^* \subset H^*(F; \F_p)$. At the level of modules of
indecomposable elements it induces an inclusion $QG^* \subset
QH^*(F; \F_p)$, because of the freeness of $H^*(F; \F_p)$ and our
choices of generators. Since the category $\mathcal U$ of unstable
modules is locally Noetherian, \cite[Theorem~1.8.1]{MR95d:55017},
the unstable module $QG^*$ is finitely generated. Therefore, $G^*$
is a finitely generated $\A_p$-algebra.
\end{proof}

The proof of the next proposition follows the line of the
Dwyer-Wilkerson result \cite[Proposition~12.4]{MR1274096}, see
also Evens, \cite{MR0137742}.

\begin{proposition}
\label{fgmodule}
The cohomology of the total space $H^* (E; \F_p)$ is finitely
generated as a module over $H^*(B; \F_p)[z_1, \dots, z_k] \otimes
G^*$.
\end{proposition}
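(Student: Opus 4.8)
The plan is to analyze the Serre spectral sequence of the fibration $F \rightarrow E \rightarrow B$ using the multiplicative splitting $H^*(F;\F_p) \cong F^* \otimes G^*$ from Lemma~\ref{SSsplitting}, where $F^*$ is finitely generated as an algebra and $G^*$ consists of permanent cycles. The key structural input is that $H^*(B;\F_p)$ is \emph{finite}, so the spectral sequence has only finitely many columns and collapses at a finite page $E_r$. Since $\pi_1 B$ acts trivially on the cohomology of the fiber, the $E_2$-term is the tensor product $H^*(B;\F_p) \otimes H^*(F;\F_p)$. The generators of $G^*$, being of large degree, are permanent cycles and hence survive to give classes in $H^*(E;\F_p)$; I will choose representatives for them in the total space. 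The elements $z_1, \dots, z_k$ are meant to be polynomial generators built from the finitely many small-degree generators of $F^*$ together with the finite cohomology of the base, chosen so that their images generate the associated graded of a suitable filtration.

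First I would set up the Serre filtration on $H^*(E;\F_p)$ and identify the bigraded object $E_\infty^{*,*}$ as a subquotient of $H^*(B;\F_p) \otimes (F^* \otimes G^*)$. Because the base has finite cohomology and $G^*$ consists of permanent cycles, the $G^*$-factor contributes a free tensor factor to $E_\infty$, while the interaction between $H^*(B;\F_p)$ and the small-degree part $F^*$ is confined to a finite range. The strategy, following the Dwyer--Wilkerson argument in \cite[Proposition~12.4]{MR1274096} and Evens~\cite{MR0137742}, is to exhibit finitely many polynomial classes $z_1, \dots, z_k$ in $H^*(E;\F_p)$ whose restrictions to the fiber are appropriate powers of the small-degree generators (a ``norm'' or power construction ensuring that products of fiber generators become permanent cycles), so that $E_\infty$ is finitely generated as a module over the image of $H^*(B;\F_p)[z_1,\dots,z_k] \otimes G^*$. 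The finiteness of $H^*(B;\F_p)$ guarantees that the number of such generators needed is finite.

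Next I would lift this statement from the associated graded $E_\infty$ to $H^*(E;\F_p)$ itself. Since the spectral sequence collapses at a finite stage and the filtration on each cohomology group of $E$ is finite, a standard filtration argument shows that finite generation of the associated graded module over the associated graded ring implies finite generation of $H^*(E;\F_p)$ as a module over $H^*(B;\F_p)[z_1,\dots,z_k] \otimes G^*$. Here I would invoke that $G^*$ is itself finitely generated as an $\A_p$-algebra (the last claim of Lemma~\ref{SSsplitting}), and that $H^*(B;\F_p)$ is finite, so the ring over which we take the module is of a manageable form.

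The main obstacle I anticipate is the power/norm construction of the classes $z_i$: one must verify that sufficiently high powers of the small-degree fiber generators (or suitable combinations) become permanent cycles and lift to genuine classes in $H^*(E;\F_p)$, and that finitely many such classes suffice to make $E_\infty$ a finitely generated module. This is exactly the technical heart of the Dwyer--Wilkerson and Evens approach, where one uses the finiteness of the base to bound the differentials and a multiplicative argument (essentially that a product of transgressive elements, raised to a power exceeding the number of nonzero columns, must survive) to produce the polynomial generators. Controlling the trivial action of $\pi_1 B$ and the freeness of $H^*(F;\F_p)$ from Serre and Cartan will be essential to make the bookkeeping of which elements are permanent cycles precise.
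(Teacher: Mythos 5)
Your proposal is correct and takes essentially the same route as the paper's proof: the paper defines $z_i = (a_i)^{p^{n_i}}$ as the smallest $p$-th power of each polynomial generator of $F^*$ that becomes a permanent cycle (which exists by Kudo transgression plus finiteness of $H^*(B;\F_p)$), lifts these together with the $\A_p$-generators of $G^*$ to $H^*(E;\F_p)$ via an algebra map (using freeness of $\F_p[z_i]\otimes G^*$), and concludes finite generation of $H^*(E;\F_p)$ from that of $E_\infty = E_r$ by the filtration argument you describe, citing Zariski--Samuel. The only detail worth making explicit in your write-up is the precise form of the power construction and the reference justifying the passage from the associated graded to the module itself.
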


\begin{proof}
The free algebra $F^*$ is finitely generated and we consider first
all polynomial generators $a_1, \dots, a_k$. We define $z_i =
(a_i)^{p^{n_i}}$ where $n_i$ is the smallest integer such that this
power of $a_i$ is a permanent cycle (it exists since these powers
are transgressive, compare with the proof of
Lemma~\ref{SSsplitting}). Then $F^*$ is a finitely generated module
over $\F_p[z_1, \dots, z_k]$. Chose now a finite set of generators
$g_1, \dots , g_r$ of $G^*$ as an algebra over the Steenrod algebra.

The elements $z_i$ and the elements $g_1, \dots , g_r$ are permanent
cycles in the vertical axis of the Serre spectral sequence, one can
thus choose elements $z_i'$ and $g_j'$ in $H^* (E; \F_p)$ whose
images in $H^* (K(A, n); \F_p)$ are the $z_i$'s and the $g_j'$'s.
Better said, since both $\F_p[z_i]$ and $G^*$ are free algebras, we
choose an algebra map $s\colon \F_p[z_i] \otimes G^* \hookrightarrow
H^*(E; \F_p)$. The elements in $H^* (B; \F_p)$ act on $H^* (E;
\F_p)$ via $p^*\colon H^* (X; \F_p) \rightarrow H^* (E; \F_p)$. This
explains the module structure.

We see that $E_\infty = E_r$ is finitely generated as a module
over $H^*(B; \F_p)[z_1, \dots, z_k] \otimes G^*$. Therefore so is
$H^*(E; \F_p)$ by \cite[Corollary~VII.3.3]{MR0389876}.
\end{proof}

The difficulty to infer information about the $\A_p$-algebra
structure from the module structure is that the algebra map $s$ is
not a map of $\mathcal A_p$-algebras. To circumvent this problem we
will appeal to the algebraic result proved in the
appendix~\ref{appendix}.

\begin{theorem}
\label{thm SSS}
Consider a fibration $\prod_{i=1}^q K(A_i, n_i) \rightarrow E
\rightarrow B$ where $H^*(B; \F_p)$ is finite and $A_i$ is a
finitely generated abelian group for all~$i$. The cohomology $H^*(E;
\F_p)$ is then finitely generated as an algebra over~$\A_p$.
\end{theorem}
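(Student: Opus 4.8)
The plan is to combine the module-finiteness result of Proposition~\ref{fgmodule} with the algebraic comparison lemma from the appendix in order to promote module-finiteness over $H^*(B;\F_p)[z_1,\dots,z_k]\otimes G^*$ to algebra-finiteness over $\A_p$. The two key inputs are both already established: by Lemma~\ref{SSsplitting} the fibre cohomology splits as $F^*\otimes G^*$ with $F^*$ finitely generated as an algebra and $G^*$ finitely generated as an $\A_p$-algebra consisting of permanent cycles; and by Proposition~\ref{fgmodule} the total cohomology $H^*(E;\F_p)$ is a finitely generated module over $R=H^*(B;\F_p)[z_1,\dots,z_k]\otimes G^*$.

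First I would observe that $R$ itself is finitely generated as an algebra over~$\A_p$. Indeed $H^*(B;\F_p)$ is finite, hence trivially finitely generated over~$\A_p$; each $z_i$ is a single polynomial generator, contributing finitely many algebra generators; and $G^*$ is finitely generated over~$\A_p$ by Lemma~\ref{SSsplitting}. Since the tensor product of finitely many $\A_p$-algebras that are each finitely generated over~$\A_p$ is again finitely generated over~$\A_p$ (the generators of the factors, together with the instability relations, generate the product), $R$ is a finitely generated $\A_p$-algebra. At this point the situation is precisely the hypothesis of the algebraic result in the appendix: an unstable $\A_p$-algebra $H^*(E;\F_p)$ which is module-finite over a subalgebra $R$ that is finitely generated over~$\A_p$.

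The remaining step, and the crux of the argument, is to invoke the appendix result to conclude that $H^*(E;\F_p)$ is finitely generated as an algebra over~$\A_p$. This is exactly the point flagged just before the statement: the structure map $s\colon \F_p[z_i]\otimes G^*\hookrightarrow H^*(E;\F_p)$ chosen in Proposition~\ref{fgmodule} is only an algebra map, not a map of $\A_p$-algebras, so one cannot directly transport the Steenrod action across the splitting. The appendix is designed precisely to circumvent this: it shows that if an unstable algebra is finite as a module over a sub-$\A_p$-algebra that is itself finitely generated over~$\A_p$, then the whole algebra inherits finite generation over~$\A_p$, even though the module generators are not $\A_p$-linearly controlled.

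The hard part is therefore not the spectral sequence bookkeeping, which was completed in the preceding lemma and proposition, but rather the passage from module-finiteness to $\A_p$-algebra-finiteness, which rests entirely on the appendix and on verifying that $R$ is a legitimate finitely generated $\A_p$-subalgebra to which that result applies. I would thus structure the proof as a short deduction: recall the module structure from Proposition~\ref{fgmodule}, check that the ground ring $R$ is finitely generated over~$\A_p$ as above, and then quote the appendix to finish. One should take care that the instability and the action of the Bocksteins on the extra generators of $H^*(E;\F_p)$ are accounted for, but these are exactly what the appendix is built to handle.
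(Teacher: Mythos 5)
Your overall strategy coincides with the paper's: combine Lemma~\ref{SSsplitting} and Proposition~\ref{fgmodule}, then pass to the appendix to convert module-finiteness into $\A_p$-algebra finiteness. But the way you invoke the appendix contains a genuine gap. You quote it as showing that if an unstable algebra is module-finite over a sub-$\A_p$-algebra that is itself finitely generated over~$\A_p$, then the whole algebra is finitely generated over~$\A_p$. The appendix proves no such unconditional statement, and in fact its opening counterexample refutes the version you use: take $G^* = H^*(K(\Z/2,2);\F_2)$ and $B^*$ isomorphic to $G^*$ as an algebra but with trivial Steenrod action; then $B^*$ is a finitely generated $G^*$-module, yet it is not a finitely generated $\A_2$-algebra. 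What Proposition~\ref{algebraprop} actually requires, beyond module-finiteness, is an extra compatibility datum: a morphism $p\colon B^*\rightarrow G^*$ of $G^*$-modules satisfying $p(\theta(g\cdot 1))=\theta g$ for all $\theta\in\A_p$ and $g\in G^*$. Relatedly, your description of $R = H^*(B;\F_p)[z_1,\dots,z_k]\otimes G^*$ as a ``sub-$\A_p$-algebra'' of $H^*(E;\F_p)$ is precisely what fails here: the section $s$ is only an algebra map, so its image need not be closed under Steenrod operations. If it were closed, no appendix would be needed at all, since the compatible case is, in the paper's own words, obvious.

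The missing step is therefore the construction and verification of the morphism $p$, which is the actual content of the paper's proof. The paper produces it geometrically: the restriction to the fiber $i^*\colon H^*(E;\F_p)\rightarrow H^*\bigl(\prod K(A_i,n_i);\F_p\bigr)\cong F^*\otimes G^*$ is, unlike $s$, a map of unstable algebras, and composing it with the projection $\pi\colon F^*\otimes G^*\rightarrow G^*$ gives $p = \pi\circ i^*$, a morphism of $G^*$-modules satisfying the required compatibility; only then does Proposition~\ref{algebraprop} apply. Without exhibiting such a $p$, your final ``quote the appendix to finish'' is not a valid deduction. Note also that your preliminary verification that $R$ is finitely generated over~$\A_p$ is not what the appendix asks for: Proposition~\ref{algebraprop} needs $G^*$ to be a finitely generated $\A_p$-algebra and $C^* = H^*(B;\F_p)[z_1,\dots,z_k]$ merely a finitely generated algebra, so that part of your argument, while harmless, does not substitute for the missing hypothesis.
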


\begin{proof}
In the notation of the appendix, $H^*(B; \F_p)[z_1, \dots, z_k]$
is the connected and commutative finitely generated algebra $C^*$,
and $B^* = H^*(E; \F_p)$. By Proposition~\ref{fgmodule}, $H^*(E;
\F_p)$ is a finitely generated $C^* \otimes G^*$-module. The
action of $C^* \otimes G^*$ on $B^*$ has been defined in the
previous proof via an algebra map (constructed from a section $s:
G^* \rightarrow B^*$), thus $B^*$ is a $C^* \otimes G^*$-algebra.
Define now $\pi: H^*(\prod K(A_i,n_i); \F_p) \cong F^* \otimes G^*
\rightarrow G^*$ to be the projection and $p: B^* \rightarrow G^*$
to be the composite $\pi \circ i^*$. This is a morphism of
$G^*$-modules so that Proposition~\ref{algebraprop} applies. Hence
$H^*(E; \F_p)$ is finitely generated as an algebra over~$\A_p$.
\end{proof}

\begin{remark}
\label{rem generalize}
{\rm The nature of Theorem~\ref{thm SSS} is purely cohomological.
Therefore the same statement remains true if we relax the assumption
on the fiber in the following way: The fiber $F$ should be
homotopic, up to $p$-completion, to $\prod_{i=1}^q K(A_i, n_i)$
where each $A_i$ is a finitely generated abelian group. This will
allow us to include summands of the form $\Z_{p^\infty}$ or
$\Z^\wedge_p$. In fact the same proof goes through with the
assumption that $H^*(F; \F_p)$ is a free algebra, finitely generated
as an algebra over~$\A_p$.}
\end{remark}

As a byproduct we obtain the following result on highly connected
covers of finite complexes. For a mod $p$ finite $H$-space $B$ we
proved in \cite{CCS5} that the mod $p$ cohomology of $B\langle n
\rangle$ is finitely generated for \emph{any} integer~$n$.

\begin{corollary}
\label{cor finitecover}
Let $n \geq 2$ and $B$ be an $(n-1)$-connected space with finite mod
$p$ cohomology. Then $H^*(B\langle n\rangle; \F_p)$ is finitely
generated as an algebra over~$\A_p$. Moreover the unstable module of
indecomposable elements $QH^*(B\langle n\rangle; \F_p)$ lies in
$\mathcal U_{n-2}$.
\end{corollary}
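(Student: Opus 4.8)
The plan is to deduce this corollary from Theorem~\ref{thm SSS} (together with Remark~\ref{rem generalize}) and Theorem~\ref{thm TQ}, by exhibiting $B\langle n\rangle$ as the total space of a fibration of the right shape. The standard tool is the connected cover fibration
$$
B\langle n\rangle \longrightarrow B \longrightarrow B[n-1],
$$
but this presents $B\langle n\rangle$ as a \emph{fiber}, not as a total space, so instead I would use the neighbouring fibration in which the $(n-1)$-st Postnikov piece appears as the fiber. Since $B$ is $(n-1)$-connected, its first nontrivial homotopy group is $\pi_n(B) = A$ (a finitely generated abelian group, as $B$ has finite mod~$p$ cohomology), and there is a fibration
$$
K(A, n) \longrightarrow B \longrightarrow B\langle n+1\rangle.
$$
Re-indexing, I would work with the fibration whose total space is $B\langle n\rangle$ itself: writing $A = \pi_n(B)$, the $n$-th $k$-invariant yields
$$
K(A, n-1) \longrightarrow B\langle n\rangle \longrightarrow B\langle n-1\rangle = B,
$$
wait---this is the wrong direction. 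The correct approach is to induct, peeling off one Postnikov piece at a time.

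\textbf{Inductive setup.} I would prove by downward/upward induction that for each $k$ with $n \leq k$, the cover $B\langle k\rangle$ has mod~$p$ cohomology finitely generated over~$\A_p$. The base case is $B\langle n\rangle$ with $B$ itself $(n-1)$-connected; but more usefully, the top of the tower $B = B\langle n-1\rangle$ has finite cohomology by hypothesis, so it suffices to climb down the connected-cover tower. At each stage there is a principal fibration
$$
B\langle k+1\rangle \longrightarrow B\langle k\rangle \longrightarrow K(\pi_k B, k),
$$
which is not immediately of the form handled by Theorem~\ref{thm SSS}. The trick is to rotate it: the fiber sequence gives equally a fibration
$$
K(\pi_k B, k-1) \longrightarrow B\langle k+1\rangle \longrightarrow B\langle k\rangle,
$$
so that $B\langle k+1\rangle$ is the total space of a fibration with base $B\langle k\rangle$ and fiber a single Eilenberg--Mac Lane space $K(\pi_k B, k-1)$. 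Now I face a mismatch: Theorem~\ref{thm SSS} requires the \emph{base} to have \emph{finite} cohomology, whereas here it is only finitely generated over~$\A_p$. So the naive induction does not plug directly into Theorem~\ref{thm SSS}.

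\textbf{The main obstacle} is precisely this finiteness-of-the-base hypothesis. I expect the resolution is to run the induction the other way and keep the finite space as the base throughout: present $B\langle n\rangle$ as the total space of a \emph{single} fibration
$$
F \longrightarrow B\langle n\rangle \longrightarrow B,
$$
where $F = \Omega\bigl(B[n-1]\bigr)$ is the loop space of the Postnikov truncation. Since $B$ is $(n-1)$-connected, $B[n-1]$ is contractible and this fibration is trivial---so this is not it either. The genuinely correct fibration, dual to the cover fibration, is obtained from the fiber sequence $B\langle n\rangle \to B \to B[n-1]$ by noting that $B[n-1]$ is a product of Eilenberg--Mac Lane spaces only through a Postnikov tower, not a single one. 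I would therefore present $B$ as the total space over the finite base $B\langle n\rangle$? No. The clean statement I would aim for is: apply Theorem~\ref{thm SSS} to the fibration $F \to B \to B\langle n\rangle$ where $F = B[n-1]$ \emph{built as a finite Postnikov tower of Eilenberg--Mac Lane spaces} $\prod K(\pi_i B, i)$ for $i < n$---but $B$ is $(n-1)$-connected so all these vanish. The honest conclusion is that for $(n-1)$-connected $B$, the covers are governed by the \emph{single} fibration
$$
K(\pi_n B, n) \longrightarrow B \longrightarrow B\langle n+1\rangle,
$$
so by Theorem~\ref{thm SSS} (applied with base $B\langle n+1\rangle$ and fiber $K(\pi_n B, n)$, noting that $B\langle n+1\rangle$ inherits finiteness from $B$ since they differ only by the bottom cell) one gets finite generation over~$\A_p$ for $B\langle n+1\rangle$, and then a descending induction on the connectivity yields the claim for $B\langle n\rangle$ provided the base at each rotated stage remains finite---which I would secure by always rotating so the \emph{finite} space $B$ sits at the base.

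\textbf{The Krull filtration statement.} For the bound $QH^*(B\langle n\rangle;\F_p) \in \U_{n-2}$, I would argue by induction on $n$ using Theorem~\ref{thm TQ}, or more directly Schwartz's criterion $M \in \U_k \iff \bar T^{k+1}M = 0$. Since $\Omega^{?} B\langle n\rangle$ has a highly connected cover with locally finite cohomology, one computes $\bar T$ via the pointed mapping space $\map_*(B\Z/p, B\langle n\rangle)_c$, which by Proposition~\ref{prop Jesper} is a Postnikov piece of the appropriate height; each application of $\bar T$ drops the height of this mapping space by one, and after $n-1$ applications it becomes locally finite (its $QH^*$ lies in $\U_0$), whence $\bar T^{n-1}QH^*(B\langle n\rangle;\F_p)$ is locally finite and $\bar T^{n-1}QH^* \in \U_0$, giving $QH^* \in \U_{n-2}$. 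The delicate point I would check carefully is the indexing: the base case $n=2$ must recover $QH^* \in \U_0$, matching the known statement that $2$-connected such covers are $B\Z/p$-local with locally finite cohomology.
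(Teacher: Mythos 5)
Your argument for the first claim has a genuine gap, and ironically you wrote down the correct proof and then discarded it. The fibration
$K(\pi_n B, n-1) \rightarrow B\langle n\rangle \rightarrow B$
(which you produced and rejected as ``the wrong direction'') is precisely the input for Theorem~\ref{thm SSS}: its fiber is a single Eilenberg-Mac Lane space on a finitely generated group (use Remark~\ref{rem generalize} if $p$-completion intervenes), its base $B$ has \emph{finite} mod $p$ cohomology, and the conclusion of that theorem concerns the \emph{total} space, which here is exactly $B\langle n\rangle$. This single application is the paper's entire proof of finite generation over $\A_p$. Your substitute final argument fails on two counts. First, $K(\pi_n B, n) \rightarrow B \rightarrow B\langle n+1\rangle$ is not a fiber sequence: cover maps go from $B\langle n+1\rangle$ to $B$, not the other way; the actual sequences are $B\langle n\rangle \rightarrow B \rightarrow K(\pi_n B, n)$ and its rotation above. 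Second, the claim that ``$B\langle n+1\rangle$ inherits finiteness from $B$ since they differ only by the bottom cell'' is false: connected covers of spaces with finite cohomology typically have infinite cohomology (for instance $H^*(S^3\langle 3\rangle; \F_p) \cong \F_p[x]\otimes E(y)$, a polynomial tensor exterior algebra). Were that claim true, the corollary itself would be vacuous, since finiteness would propagate up the whole Whitehead tower.

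The Krull filtration part is the right idea but is off by one as written. By Schwartz's criterion, $M \in \U_k$ if and only if $\bar T^{k+1}M = 0$; so your conclusion ``$\bar T^{n-1}QH^* \in \U_0$'' yields only $\bar T^{n}QH^* = 0$, i.e. $QH^* \in \U_{n-1}$, not $\U_{n-2}$. The correct count needs $\bar T^{n-2}QH^* \in \U_0$, and the cleanest route -- the paper's -- uses only \emph{one} application of $\bar T$: by Corollary~\ref{cor splitting}, $\map(B\Z/p, B\langle n\rangle)_c \simeq B\langle n\rangle \times \map_*(B\Z/p, B\langle n\rangle)_c$, and since $B$ is $B\Z/p$-local by Miller's theorem, the fibration above identifies $\map_*(B\Z/p, B\langle n\rangle)_c \simeq \map_*(B\Z/p, K(\pi_n B, n-1))_c$, a product of Eilenberg-Mac Lane spaces whose top one is $K(\Hom(\Z/p,\pi_n B), n-2)$. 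The module of indecomposables of its cohomology lies in $\U_{n-3}$, so arguing as in Theorem~\ref{thm TQ} gives $\bar T QH^*(B\langle n\rangle; \F_p) \in \U_{n-3}$ and hence $QH^*(B\langle n\rangle; \F_p) \in \U_{n-2}$. Note also that your plan to iterate $\bar T$ geometrically would require re-verifying the hypotheses of Theorem~\ref{thm TQ} ($p$-completeness, finite type, $B\Z/p$-locality of the double loop space) at each intermediate mapping space; computing $\bar T$ once and then invoking the known Krull position of $QH^*$ of Eilenberg-Mac Lane spaces avoids this entirely.
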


\begin{proof}
The cohomology $H^*(B\langle n\rangle; \F_p)$ is finitely
generated as an algebra over $\A_p$ by a direct application of
Theorem~\ref{thm SSS}. Now from Corollary~\ref{cor splitting} we
infer that $\map(B\Z/p, B\langle n\rangle)_c$ splits as a product
$B\langle n\rangle \times \map_*(B\Z/p, B\langle n\rangle)_c$.
Since $B$ itself is $B\Z/p$-local by Miller's Theorem,
\cite{Miller}, this pointed mapping space is equivalent to
$\map_*(B\Z/p, K(\pi_n X, n-1)_c$. This is a product of
Eilenberg-Mac Lane spaces, the highest of them being $K(A, n-2)$
where $A = \Hom(\Z/p, \pi_n B)$. The module of indecomposable
elements of its cohomology lies in~$\mathcal U_{n-3}$. Thus
$QH^*(B\langle n\rangle; \F_p)$ lies in $\mathcal U_{n-2}$, the
proof is analogous to that of Theorem~\ref{thm TQ}.
\end{proof}

\section{The cohomology of $p$-Noetherian groups}
\label{sec pNoetherian_H^*}
We are about to conclude our study of the cohomology of
classifying spaces of $p$-Noetherian groups. We have seen that any
$p$-Noetherian group is the total space of a fibration over a
$p$-compact group with fiber an Eilenberg-Mac Lane space. Recall
from \cite{MR1274096}'s main theorem that the mod $p$-cohomology
of the classifying space of a $p$-compact group is finitely
generated as an algebra. Our objective is to prove the following
theorem, which together with Theorem~\ref{thm TQforpNoetherian}
gives a very accurate description for the cohomology of
classifying spaces of $p$-Noetherian groups.

\begin{theorem}
\label{theorem BX and Ap}
Let $(X, BX, e)$ be a $p$-Noetherian group. Then $H^*(BX; \F_p)$ is
finitely generated as an algebra over the Steenrod algebra.
\end{theorem}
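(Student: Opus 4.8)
The plan is to apply Theorem~\ref{thm SSS} to the defining fibration of $BX$, so the proof reduces to checking that the hypotheses of that theorem are met. Recall from Theorem~\ref{structure of BX} that any $p$-Noetherian group sits in a fibration
$$
K(P, 2)^\wedge_p \rightarrow BX \rightarrow BY\, ,
$$
where $P$ is a finite direct sum of cyclic and Pr\"ufer groups and $Y$ is a $p$-compact group. The fiber is (up to $p$-completion) a product of Eilenberg-Mac Lane spaces whose cohomology is a free, finitely generated $\A_p$-algebra, so by Remark~\ref{rem generalize} the fiber is of the required type even though $P$ may contain Pr\"ufer summands $\Z_{p^\infty}$. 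The base $BY$, on the other hand, is the classifying space of a $p$-compact group, so by Dwyer and Wilkerson's main theorem~\cite{MR1274096} its mod $p$ cohomology $H^*(BY; \F_p)$ is finitely generated as an \emph{algebra}. Thus everything would follow directly from Theorem~\ref{thm SSS} except for one discrepancy: that theorem requires the base to have \emph{finite} cohomology, whereas $H^*(BY; \F_p)$ is only Noetherian.

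The main obstacle is therefore to descend from a Noetherian base to a finite one. The strategy I would follow is to reduce the structural group of the $p$-compact group $Y$ to a maximal $p$-compact toral subgroup. A $p$-compact group $Y$ admits a maximal torus $T$ and more precisely a $p$-normalizer, and the cohomology $H^*(BY; \F_p)$ embeds (via restriction along $BN \to BY$, where $N$ is the maximal toral normalizer) into the cohomology of something built out of $BT$ and a finite group; crucially, $H^*(BY; \F_p)$ is a finitely generated module over $H^*(BT; \F_p)^W$, and the latter is a polynomial algebra on finitely many generators. The plan is then to pull back the fibration $K(P,2)^\wedge_p \to BX \to BY$ along the map $BN \to BY$ (or $BT \to BY$), obtaining a new fibration with the \emph{same} fiber but over a base whose cohomology, while still not finite, is polynomial. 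One then kills the polynomial generators one at a time by a further finite sequence of fibrations, each time quotienting the base by a single $K(\Z^\wedge_p, 2m)$, until the base becomes finite; at each stage Theorem~\ref{thm SSS} (in the form given by Remark~\ref{rem generalize}) applies to show the total space retains a finitely generated $\A_p$-algebra cohomology.

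More concretely, I would argue as follows. A $p$-compact toral group fibers as $K(\oplus \Z^\wedge_p, 2) \to BT^\wedge_p \times B\pi \to \ast$ roughly speaking, and a maximal $p$-compact toral subgroup gives a map $BN \to BY$ over which $H^*(BY;\F_p)$ becomes finitely generated as a module. The reduction the introduction promises — \emph{``we reduce the situation in which the base space of the fibration is a $p$-compact toral group''} — is exactly this step. Once the base is a $p$-compact toral group, its cohomology is a tensor product of a polynomial algebra $\F_p[y_1, \dots, y_s]$ (from the torus part) with the finite cohomology of a finite $p$-group. I would then peel off the polynomial generators using the standard fibration trick: each $y_i$ of degree $2m_i$ is represented by a map to $K(\Z^\wedge_p, 2m_i)$, whose homotopy fiber has cohomology finite in the relevant range, and pulling back along this fiber reduces the number of polynomial generators by one. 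After finitely many such steps the base has finite cohomology and Theorem~\ref{thm SSS} finishes the argument directly. The delicate point, and the one I would be most careful about, is ensuring at each stage that the fiber of the composite still satisfies the hypotheses of Remark~\ref{rem generalize} (a free, finitely generated $\A_p$-algebra) and that the finiteness propagates correctly, so that the conclusion for the intermediate total spaces can be fed back into the next stage.
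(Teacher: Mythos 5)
Your opening moves coincide with the paper's: use the fibration $K(P,2)^\wedge_p \to BX \to BY$ of Theorem~\ref{structure of BX}, observe that Theorem~\ref{thm SSS} requires a \emph{finite} base, and pull back along a maximal $p$-compact toral subgroup $BN \to BY$. But from there the proposal has a genuine gap: your induction ``peeling off polynomial generators'' has no engine. If $B_{r-1}$ denotes the fiber of $B_r \to K(\Z^\wedge_p,2)$ and $\widetilde{B}_r$ the pullback of the total space over $B_r$, then Theorem~\ref{thm SSS} applies only at the very bottom of your tower, where the base is $\F_p$-finite. To climb back up you must pass from ``$H^*(\widetilde{B}_{r-1};\F_p)$ is a finitely generated $\A_p$-algebra'' to the same statement for $\widetilde{B}_r$, i.e.\ go from the \emph{fiber} to the \emph{total space} of the fibration $\widetilde{B}_{r-1} \to \widetilde{B}_r \to K(\Z^\wedge_p,2)$. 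Theorem~\ref{thm SSS} says nothing here (the base is not finite, the fiber is not an Eilenberg-Mac Lane space), and Example~\ref{counter--example} shows that such ascent statements are false in general. This propagation step is precisely where the paper has to introduce a new tool, Proposition~\ref{fibration-local-fibre}: a \emph{descent} result asserting that if the total space of a fibration with locally finite fiber has finitely generated $\A_p$-algebra cohomology, $\ker q^*$ is a finitely generated ideal, and $q$ induces an isomorphism on $\bar{T}Q$, then the base does too. To make this applicable the paper replaces your Postnikov-style tower by the unitary filtration: embed $N$ into $U(n)^\wedge_p$ (Peter--Weyl for $p$-compact toral groups), set $E_r$ equal to the pullback of $\widetilde{BN}$ over $BU(r)^\wedge_p$, so consecutive stages are linked by \emph{spherical} fibrations $(S^{2r-1})^\wedge_p \to E_{r-1} \to E_r$; the finite fiber and the Euler class (a single generator of $\ker q_r^*$) are exactly what Proposition~\ref{fibration-local-fibre} needs, with the $\bar{T}Q$ hypothesis supplied by the mapping-space splittings of Propositions~\ref{inductionev} and~\ref{T}.

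A second, independent gap: even granting that $H^*(\widetilde{BN};\F_p)$ is finitely generated over $\A_p$, you never descend back to $BX$. The transfer argument makes $H^*(BX;\F_p) \to H^*(\widetilde{BN};\F_p)$ injective, but an unstable subalgebra of a finitely generated $\A_p$-algebra need not be finitely generated --- Remark~\ref{rem CCS5} records a counterexample --- so injectivity alone does not conclude. The paper again invokes Proposition~\ref{fibration-local-fibre}, this time for the fibration $Y/N \to \widetilde{BN} \to BX$, whose fiber is mod $p$ finite. (A minor factual slip, incidental to the main gaps: the mod $p$ cohomology of a nontrivial finite $p$-group is not finite, and $H^*(BN;\F_p)$ need not split as a polynomial algebra tensor a finite one; but what is really missing is the descent mechanism.)
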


The strategy is to use the fibration $K(P, 2)^\wedge_p \rightarrow
BX \rightarrow BY$ of Theorem~\ref{structure of BX}. As we do not
know whether the Serre spectral sequence collapses at some finite
stage, we reduce the problem in several steps to the study of a
spectral sequence over a finite base (in order to apply our
results from the previous section).

A $p$-compact toral group $P$ is a $p$-compact group which is an
extension of a $p$-compact torus $((S^1)^n)\pcom$ by a finite
$p$-group. Dwyer and Wilkerson in \cite{MR1274096} show that any
$p$-compact group $Y$ admits a maximal $p$-compact toral subgroup
$N\leq Y$ such that the homotopy fiber $Y/N$ of the map $Bi\colon
BN\rightarrow BY$ has finite mod $p$ cohomology and Euler
characteristic prime to $p$ (see \cite[Proof of 2.3]{MR1274096}).
A transfer argument (see \cite[Theorem~9.13]{MR1274096}) then
shows that $Bi$ induces a monomorphism in mod $p$ cohomology.
Consider now  the pullback diagram
\[
\diagram
\widetilde{BN} \dto \rto^{B\tilde{i}} & BX \dto \\
BN \rto^{Bi} & BY.
\enddiagram
\]

First note that, by Proposition \ref{closed-under-fibrations},
$\widetilde{BN}$ is the classifying space of a $p$-Noetherian
group because we have a fibration $K(P, 2)^\wedge_p \rightarrow
\widetilde{BN} \rightarrow BN$. We will first show that
$H^*(\widetilde{BN};\F_p)$ is a finitely generated $\A_p$-algebra
by using this fibration, and then we will show that the cohomology
$H^*(BX;\F_p)$ is a finitely generated $\A_p$-algebra by using the
fibration $Y/N \rightarrow \widetilde{BN} \rightarrow BX$. We
start with a technical result which will be used in both steps of
the proof.

\begin{proposition}
\label{fibration-local-fibre}
Let $F\rightarrow E\stackrel{q}{\rightarrow} B$ be a fibration
such that $\pi_1(B)$ acts nilpotently on $H_*(F;\F_p)$. Assume
that $q$ induces an isomorphism $\bar TQH^*B\cong \bar TQH^*E$,
$H^*(F;\F_p)$ is locally finite, and $H^*(E;\F_p)$ is a finitely
generated $\A_p$-algebra. Then, if $\ker q^*$ is a finitely
generated ideal then $H^*(B;\F_p)$ is a finitely generated
$\A_p$-algebra.
\end{proposition}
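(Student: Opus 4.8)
The plan is to compare $H^*(B;\F_p)$ with $H^*(E;\F_p)$ through the edge homomorphism $q^*$, working throughout at the level of indecomposable elements. The first reduction is that it suffices to show that $QH^*(B;\F_p)$ is a finitely generated $\A_p$-module: lifting a finite set of $\A_p$-module generators to $H^*(B;\F_p)$ and inducting on the cohomological degree (expanding decomposables via the Cartan formula) shows that these lifts generate $H^*(B;\F_p)$ as an $\A_p$-algebra. So I would study the map $Qq^*\colon QH^*(B;\F_p)\to QH^*(E;\F_p)$.

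Split $Qq^*$ by the two short exact sequences
\[
0\to \ker Qq^*\to QH^*(B;\F_p)\to \im Qq^*\to 0,\qquad 0\to \im Qq^*\to QH^*(E;\F_p)\to \coker Qq^*\to 0.
\]
Since $H^*(E;\F_p)$ is a finitely generated $\A_p$-algebra, $QH^*(E;\F_p)$ is a finitely generated $\A_p$-module, and because $\U$ is locally Noetherian, \cite[Theorem~1.8.1]{MR95d:55017}, the submodule $\im Qq^*$ and the quotient $\coker Qq^*$ are both finitely generated. Now I use the hypothesis that $q^*$ induces an isomorphism $\bar T QH^*B\cong \bar T QH^*E$: applying the exact functor $\bar T$ to these sequences forces $\bar T(\ker Qq^*)=0$ and $\bar T(\coker Qq^*)=0$, so by Schwartz's characterization of the Krull filtration, \cite[Theorem~6.2.4]{MR95d:55017}, both $\ker Qq^*$ and $\coker Qq^*$ lie in $\U_0$. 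A locally finite, finitely generated unstable module is finite; hence $\coker Qq^*$ is finite, and it remains only to prove that $\ker Qq^*$ is finite.

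For the kernel I would use the last two hypotheses. Factor $Qq^*$ as the composite of $\alpha\colon QH^*(B;\F_p)\to Q(\im q^*)$ and $\beta\colon Q(\im q^*)\to QH^*(E;\F_p)$. The map $\alpha$ is induced by the quotient $H^*(B;\F_p)\to \im q^*\cong H^*(B;\F_p)/\ker q^*$, so $\ker\alpha$ is the image of the ideal $\ker q^*$ in $QH^*(B;\F_p)$; as $\ker q^*$ is a finitely generated ideal, this image is generated as an $\A_p$-module by the classes of the ideal generators, hence is finitely generated, and being contained in $\ker Qq^*\in\U_0$ it is finite. Thus it suffices to prove that the \emph{defect} $D:=\ker\beta$ — the classes indecomposable in the subalgebra $\im q^*$ but decomposable in $H^*(E;\F_p)$ — is finite. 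Since $\bar T\alpha$ is surjective and $\bar T\beta\circ\bar T\alpha=\bar T(Qq^*)$ is an isomorphism, $\bar T\beta$ is an isomorphism, whence $D\in\U_0$ as well.

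The finiteness of $D$ is the step I expect to be the main obstacle, and it is precisely where the nilpotent action of $\pi_1(B)$ and the local finiteness of $H^*(F;\F_p)$ are needed (the $\A_p$-stability of $\im q^*$ is essential: for a general subalgebra of a finitely generated $\A_p$-algebra the defect can be infinite). The nilpotent action makes the Serre spectral sequence of the fibration available and identifies $\im q^*$ with the edge algebra $E_\infty^{*,0}$, so a class of $D$ is a base-row class that is nonetheless a sum of products of classes of strictly positive fibre degree in $H^*(E;\F_p)$. By Kudo's transgression theorem the fibre-degree filtration is stable under the Steenrod operations, so these products are governed by $\tilde H^*(F;\F_p)$; the technical heart is to turn this into an injection of $D$ into a subquotient of $QH^*(F;\F_p)$, built from the fibre inclusion $i\colon F\to E$. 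Since $H^*(F;\F_p)$ is locally finite, the image $\im Qi^*\subseteq QH^*(F;\F_p)$ is both finitely generated (a quotient of $QH^*(E;\F_p)$) and locally finite, hence finite; feeding this through the injection bounds the locally finite module $D$ and forces it to be finite. Granting $D$ finite, $\ker Qq^*$ is an extension of two finite modules, hence finite, so $QH^*(B;\F_p)$ is finitely generated and $H^*(B;\F_p)$ is a finitely generated $\A_p$-algebra.
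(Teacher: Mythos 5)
Your formal skeleton is correct, and parts of it coincide with the paper's own proof: your analysis of $\coker Qq^*$ (finitely generated by local Noetherianness of $\U$, killed by the exact functor $\bar T$, hence finite) is exactly the paper's observation that $QA$ is finite, where $A=\F_p\otimes_{H^*(B;\F_p)}H^*(E;\F_p)$ is the quotient of $H^*(E;\F_p)$ by the ideal generated by $\im q^*$; and your treatment of $\ker\alpha$ is the paper's closing step, where $H^*(B;\F_p)$ is recovered from $H^*(B;\F_p)//\ker q^*$ and the finitely many ideal generators of $\ker q^*$. The genuine gap is precisely the step you flag but do not carry out: the finiteness of $D=\ker\bigl(Q(\im q^*)\to QH^*(E;\F_p)\bigr)$. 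This is the only place where the two geometric hypotheses (nilpotent action of $\pi_1(B)$, local finiteness of $H^*(F;\F_p)$) can enter, so deferring it means the proof contains none of the actual content of the proposition. Worse, the mechanism you propose is doubtful: every class in $D$ is represented by an element of $\im q^*$, and such elements restrict to zero in $\tilde H^*(F;\F_p)$ because $i^*\circ q^*$ is induced by a constant map; hence the fiber inclusion induces the \emph{zero} map on $D$, and any comparison of $D$ with $QH^*(F;\F_p)$ must be a secondary, spectral-sequence-level construction that you never supply. Note also that even granting an injection of $D$ into a subquotient of $QH^*(F;\F_p)$, such a subquotient is only locally finite, not finite; to conclude you would need $D$ to embed in a \emph{finitely generated} subquotient, and the finiteness of $\im Qi^*$ by itself does not produce such an embedding.

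For comparison, the paper fills this hole by working with the full algebra $A$ rather than with indecomposables: the Eilenberg--Moore spectral sequence (this is where the nilpotent $\pi_1$-action is used) shows that the natural map $\iota^*\colon A\to H^*(F;\F_p)$ is an $F$-monomorphism; local finiteness of $H^*(F;\F_p)$ forces $\iota^*(a)^{p^M}=0$ for some $M$, so every positive-degree element of $A$ is nilpotent, and the finitely generated algebra $A$ is therefore finite. A graded Nakayama argument then shows that $H^*(E;\F_p)$ is generated as a module over $\im q^*$ in degrees bounded by some $K$, and comparing indecomposables above that range identifies $\bigl(Q(H^*(B;\F_p)//\ker q^*)\bigr)^{>K}$ with a submodule of the finitely generated module $QH^*(E;\F_p)$ --- which is exactly the finiteness of your $D$, obtained from finiteness of $A$. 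So the missing idea in your proposal is the Eilenberg--Moore $F$-monomorphism; without it, or some equally substantive replacement, the argument does not go through.
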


\begin{proof}
Let $A$ be the algebra which is the quotient of $H^*(E; \F_p)$
by the ideal generated by the image of $q^*$, that is,
$\F_p\otimes_{H^*(B; \F_p)}H^*(E; \F_p)$. There is a coexact
sequence
$$
H^*(B; \F_p)//\ker q^* \stackrel{q^*}{\rightarrow} H^*(E;
\F_p)\rightarrow A.
$$
Since $H^*(E; \F_p)$ is a finitely generated $\A_p$-algebra, the
same is true for $A$. Therefore $QA$ is a finitely generated
$\A_p$-module, which is the the cokernel of $Qq^*$ by
right-exactness of the functor $Q$. Moreover, since by assumption
$\bar TQH^*B\cong \bar TQH^*E$, it follows from exactness of the
reduced $T$ functor that $\bar T QA=0$. This means that $QA$
belongs to $\U_0$, i.e. it is locally finite, hence finite.
Equivalently $A$ is a finitely generated algebra.

We want to show that $A$ is in fact a finite algebra. The fiber
inclusion $F \rightarrow E$ of the fibration $q$ induces a
morphism $\iota^*\colon A\rightarrow H^*(F; \F_p)$. A careful
study of the Eilenberg-Moore spectral sequence,
\cite[Theorem~0.5]{MR827370} (for the prime $2$) or
\cite[Theorem~8.7.8]{MR95d:55017}, shows that this morphism is an
$F$-monomorphism. Take now any element $a\in A$. Because
$H^*(F;\F_p)$ is locally finite, there exists $M>0$ such that
$a^{p^M}\in \ker \iota^*$, which is nilpotent. There exists thus
$N>0$ such that $a^{p^{N+M}}=0$. Since all elements of $A$ are
nilpotent and it is a finitely generated algebra, $A$ must be
finite.

For the last statement, note that, as an $H^*(B; \F_p)$-module,
the cohomology $H^*(E; \F_p)$ is isomorphic to $\bigl( H^*(B;
\F_p)//\ker q^*\bigr )\{b_1,\ldots,b_k\}$, where the $b_i$'s are
the generators of $A$ as a (graded) $\F_p$-vector space. This
description shows that the morphism $Q(H^*(B; \F_p)//\ker q^*)
\rightarrow QH^*(E; \F_p)$ is an isomorphism in high degrees. That
is, there exists $K>0$ such that
$$
(Q(H^*(B; \F_p)//\ker q^*))^{>K}\cong (QH^*(E; \F_p))^{>K}\, ,
$$
which is an unstable submodule of $QH^*(E; \F_p)$. Since $H^*(E;
\F_p)$ is a finitely generated $\A_p$-algebra, $QH^*(E; \F_p)$ is
a finitely generated $\A_p$-module. But then, as the category of
unstable modules over the Steenrod algebra is locally Noetherian,
\cite[Theorem~1.8.1]{MR95d:55017}, the same holds for $(Q(H^*(B;
\F_p)//\ker q^*))^{>K}$ . In particular, $Q(H^*(B; \F_p)//\ker
q^*)$ is a finitely generated $\A_p$-module since it is of finite
type, that is $H^*(B; \F_p)//\ker q^*$ is a finitely generated
$\A_p$-algebra.

Finally, $H^*(B; \F_p)$ is generated by $H^*(B; \F_p)//\ker q^*$
and $\ker q^*$, it is  therefore also finitely generated as an
algebra over the Steenrod algebra since $\ker q^*$ is a finitely
generated ideal .
\end{proof}

We apply next this result to the fibration $Y/N \rightarrow
\widetilde{BN} \rightarrow BX$.

\begin{corollary}
$H^*(BX; \F_p)$ is a finitely generated algebra if
$H^*(\widetilde{BN}; \F_p)$ is so.
\end{corollary}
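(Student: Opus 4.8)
The plan is to apply Proposition~\ref{fibration-local-fibre} to the fibration
\[
Y/N \longrightarrow \widetilde{BN} \xrightarrow{\ q\ } BX,
\]
with $q=B\tilde i$ and $F=Y/N$, $E=\widetilde{BN}$, $B=BX$. By hypothesis $H^*(\widetilde{BN};\F_p)$ is a finitely generated $\A_p$-algebra, while the fiber $Y/N$ has finite mod $p$ cohomology by Dwyer and Wilkerson's construction, so $H^*(Y/N;\F_p)$ is in particular locally finite. The action of $\pi_1(BX)$ on $H_*(Y/N;\F_p)$ is automatically nilpotent, since $\pi_1(BX)$ is a finite $p$-group (Remark~\ref{p-good}) acting on finite-dimensional $\F_p$-vector spaces. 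It thus remains to check the two serious hypotheses of Proposition~\ref{fibration-local-fibre}: that $q$ induces an isomorphism $\bar T QH^*(BX;\F_p)\cong\bar T QH^*(\widetilde{BN};\F_p)$, and that $\ker q^*$ is a finitely generated ideal.

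For the statement about $\bar T$ I would use that both $BX$ and $\widetilde{BN}$ are classifying spaces of $p$-Noetherian groups, so that Theorem~\ref{thm TQforpNoetherian} applies to each and identifies $\bar T QH^*(-)$ naturally with $QH^*(\map_*(B\Z/p,-)_c)$. Post-composition with $q$ yields a map $\map_*(B\Z/p,\widetilde{BN})_c\to\map_*(B\Z/p,BX)_c$ whose homotopy fiber over the constant map is $\map_*(B\Z/p,Y/N)_c$; as $Y/N$ has finite mod $p$ cohomology, this fiber is contractible by Miller's theorem~\cite{Miller}. Hence the map of pointed mapping spaces is a weak equivalence, and by naturality of the identification in Theorem~\ref{thm TQforpNoetherian} the induced morphism $\bar T Q q^*$ is an isomorphism.

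I expect the cleanest route to ``$\ker q^*$ finitely generated'' to be showing that $q^*$ is in fact injective, so that $\ker q^*=0$. The fibration $Y/N\to\widetilde{BN}\to BX$ is obtained by pulling back $Y/N\to BN\xrightarrow{Bi}BY$, for which Dwyer and Wilkerson provide a transfer satisfying $tr\circ (Bi)^*=\chi(Y/N)\cdot\mathrm{id}$ with $\chi(Y/N)$ prime to $p$ (\cite[Theorem~9.13]{MR1274096}). Transfers are compatible with base change, so the pulled-back fibration carries a transfer with $tr\circ q^*=\chi(Y/N)\cdot\mathrm{id}$ on $H^*(BX;\F_p)$; since $\chi(Y/N)$ is a unit in $\F_p$, the map $q^*$ is a split monomorphism and $\ker q^*=0$ is trivially finitely generated. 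With all hypotheses in place, Proposition~\ref{fibration-local-fibre} gives that $H^*(BX;\F_p)$ is finitely generated as an algebra over the Steenrod algebra.

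The hard part will be the verification of the $\bar T$-isomorphism, as it is the step where the geometry (Miller's theorem applied to the $\F_p$-finite fiber $Y/N$) has to be converted into exactly the algebraic input demanded by Proposition~\ref{fibration-local-fibre}; the remaining conditions reduce to standard $p$-compact group transfer technology and to local finiteness of the fiber's cohomology.
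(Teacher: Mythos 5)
Your proposal is correct and follows essentially the same route as the paper: the same fibration $Y/N \to \widetilde{BN} \to BX$, Miller's theorem applied to the $\F_p$-finite fiber to get the equivalence $\map_*(B\Z/p,\widetilde{BN})_c \simeq \map_*(B\Z/p,BX)_c$, Theorem~\ref{thm TQforpNoetherian} for both $p$-Noetherian groups to convert this into the $\bar T Q$-isomorphism, and the Dwyer--Wilkerson transfer (Euler characteristic prime to $p$) to get injectivity of $q^*$ before invoking Proposition~\ref{fibration-local-fibre}. The only cosmetic difference is that the paper runs the Miller argument through the diagram of unpointed evaluation fibrations rather than the pointed fibration sequence, which amounts to the same thing.
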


\begin{proof}
Consider the fibration $Y/N\rightarrow \widetilde{BN}
\stackrel{B\tilde i}{\rightarrow} BX$ and the diagram of
horizontal fibrations
\[
\diagram \map(B\Z/p, Y/N)_c \rto \dto^{ev}_{\simeq} &
\map(B\Z/p,\widetilde{BN})_c \dto^{ev_{r}} \rto & \map(B\Z/p,
BX)_c \dto^{ev_n} \cr Y/N \rto & \widetilde{BN} \rto & BX
\enddiagram
\]
where the left vertical arrow is an equivalence since the fiber
$Y/N$ is mod $p$ finite. Therefore $B\tilde i$ induces an
equivalence $\map_*(B\Z/p, \widetilde{BN})_c \rightarrow
\map_*(B\Z/p, BX)_c$. Both $BX$ and $\widetilde{BN}$ are
$p$-Noetherian groups and Theorem~\ref{thm TQforpNoetherian}
implies then that $\bar T QH^*BX \rightarrow \bar T
QH^*\widetilde{BN}$ is an isomorphism.

We conclude now by Proposition~\ref{fibration-local-fibre},
because $\pi_1(BX)$ is a finite $p$-group and $q^*$ is injective
by a transfer argument \cite[Theorem~9.13]{MR1274096} (the Euler
characteristic $\chi (Y/N)$ is prime to $p$).
\end{proof}

The key fact in the next argument is that any $p$-compact toral
group satisfies the Peter-Weyl theorem. That is, it admits a
homotopy monomorphism into $U(n)\pcom$ for some $n$. This is shown
for example in \cite[Proposition 2.2]{MR1600146}. Let us choose
then such a map $\rho: BN \rightarrow BU(n)\pcom$ for the maximal
$p$-compact toral group $BN$. The mod $p$ cohomology of the fibre
$F$ is hence $\F_p$-finite.

\begin{proposition}
\label{E0-fg}
Let $E_0$ be the homotopy pull-back of the diagram $F \rightarrow
BN \leftarrow \widetilde{BN}$. Then $H^*(E_0; \F_p)$ is finitely
generated as an algebra over the Steenrod algebra.
\end{proposition}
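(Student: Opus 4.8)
The plan is to extract from the pull-back square a fibration to which Theorem~\ref{thm SSS}, in the slightly more general form of Remark~\ref{rem generalize}, applies directly. Pulling back the fibration $K(P,2)\pcom \rightarrow \widetilde{BN} \rightarrow BN$ of Theorem~\ref{structure of BX} along the fibre inclusion $\iota\colon F \rightarrow BN$ exhibits $E_0$ as the total space of a fibration
\[
K(P,2)\pcom \rightarrow E_0 \rightarrow F\,,
\]
whose fibre is unchanged. The base $F$ has finite mod $p$ cohomology by construction, since $\rho\colon BN \rightarrow BU(n)\pcom$ is a homotopy monomorphism; this is exactly the finiteness hypothesis on the base demanded in Section~\ref{sec Serre}.

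Before applying the theorem I would verify its two remaining hypotheses. First, the monodromy: the fibration $\widetilde{BN}\rightarrow BN$ is itself a pull-back of the principal fibration of Theorem~\ref{structure of BX} along $Bi$, hence principal, so $\pi_1 BN$ acts trivially on $H^*(K(P,2)\pcom;\F_p)$; pulling back once more along $\iota$ preserves triviality, so $\pi_1 F$ acts trivially on the fibre cohomology. Second, the shape of the fibre cohomology: writing $P$ as a finite direct sum of cyclic groups $\Z/p^r$ and Pr\"ufer groups, the cyclic summands contribute factors $K(\Z/p^r,2)$, while each Pr\"ufer summand contributes a factor whose $p$-completion is $K(\Z\pcom,3)$, because $K(\Z_{p^\infty},2)\pcom\simeq K(\Z\pcom,3)$. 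By Serre~\cite{MR0060234} and Cartan~\cite{MR0087934} the cohomology $H^*(K(P,2)\pcom;\F_p)$ is thus a free graded-commutative algebra, finitely generated as an algebra over~$\A_p$.

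With the base mod $p$ finite, the monodromy trivial, and the fibre cohomology free and $\A_p$-finitely generated, Theorem~\ref{thm SSS} through Remark~\ref{rem generalize} yields that $H^*(E_0;\F_p)$ is finitely generated as an algebra over the Steenrod algebra. The step I expect to require the most care is the invocation of Remark~\ref{rem generalize}: one must be sure that the divisible (Pr\"ufer) summands of $P$ really do produce, after $p$-completion, a fibre whose cohomology is free and $\A_p$-finitely generated, so that the transgression and splitting arguments of Lemma~\ref{SSsplitting} and Proposition~\ref{fgmodule} carry over verbatim to $K(P,2)\pcom$. Once that is granted, the rest is a direct check of the finiteness of $H^*(F;\F_p)$ and of the triviality of the monodromy.
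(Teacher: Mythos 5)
Your proof is correct and follows essentially the same route as the paper: exhibit $E_0$ as the total space of the pulled-back fibration $K(P,2)\pcom \rightarrow E_0 \rightarrow F$ over the mod $p$ finite base $F$, then invoke Theorem~\ref{thm SSS} (via Remark~\ref{rem generalize} to accommodate the Pr\"ufer summands). The paper's own proof is just this in two sentences, leaving implicit the checks you spell out (triviality of the monodromy via principality, and the identification $K(\Z_{p^\infty},2)\pcom\simeq K(\Z\pcom,3)$ making the fibre cohomology free and $\A_p$-finitely generated), so your added care only confirms hypotheses the authors took for granted.
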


\begin{proof}
The space $E_0$ fits by construction into a fibration $K(P,2)\pcom
\rightarrow E_0 \rightarrow F$. Since the mod $p$ cohomology of
$F$ is finite, Theorem~\ref{thm SSS} shows that $H^*(E_0; \F_p)$
is a finitely generated $\A_p$-algebra.
\end{proof}

Let us now concentrate on the map $E_0 \rightarrow
\widetilde{BN}$. We will filter it by using the top left block
diagonal inclusions $U(r-1)\subset U(r)$ for $1 \leq r \leq n$. At
the level of classifying spaces these inclusions induce oriented
spherical fibrations
$$
S^{2r-1}\rightarrow BU(r-1)\rightarrow BU(r).
$$
Let $BU(r) \rightarrow BU(n)$ be the appropriate composite and
define then $E_r$ to be the homotopy pullback of $\widetilde{BN}
\xrightarrow{\rho} BU(n)\pcom \leftarrow BU(r)\pcom$. Thus $E_n =
\widetilde{BN}$ and, for $1 \leq r \leq n$, we have spherical
fibrations $(S^{2r-1})\pcom \rightarrow E_{r-1}\rightarrow E_r$ .
We summarize next some important properties of these spaces.

\begin{proposition}
\label{inductionev}
The component $\map(B\Z/p, E_r)_c$ splits as a product $E_r \times
\map_*(B\Z/p, E_r)_c$ for any $0 \leq r \leq n$.
\end{proposition}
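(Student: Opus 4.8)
The plan is to check, for each $E_r$, the single hypothesis of Proposition~\ref{prop general nonconnectedsplitting}, namely that the double loop space $\Omega^2 E_r$ is $B\Z/p$-local; the asserted splitting then follows at once from that proposition. I would establish this locality by downward induction on $r$, using the tower of spherical fibrations $(S^{2r-1})\pcom \rightarrow E_{r-1} \rightarrow E_r$ constructed above and starting the induction at $r = n$.

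The base case is $E_n = \widetilde{BN}$, which is the classifying space of a $p$-Noetherian group; hence $\Omega^2 E_n \simeq \Omega X$ is $B\Z/p$-local by the structure theory behind Theorem~\ref{structure of BX} (equivalently, the splitting at this stage is exactly Corollary~\ref{prop nonconnectedsplitting}). For the inductive step I would invoke two standard closure properties of the class of $B\Z/p$-local spaces. First, it is closed under looping: if $W$ is local then $\map(B\Z/p, \Omega W) \simeq \Omega\,\map(B\Z/p, W) \simeq \Omega W$, so $\Omega W$ is local. Second, it is closed under fibrations: given $F' \rightarrow E' \rightarrow B'$ with $F'$ and $B'$ local, one applies the fibration-preserving functor $\map(B\Z/p, -)$ and compares with the original fibration through the evaluation maps; as evaluation is an equivalence on fibre and base it is one on the total space, so $E'$ is local (see \cite{Dror}). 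Now $(S^{2r-1})\pcom$ is itself $B\Z/p$-local: for $r \geq 2$ it is simply connected, $p$-complete and $\F_p$-finite, so Miller's theorem \cite{Miller} gives $\map_*(B\Z/p, (S^{2r-1})\pcom) \simeq *$, while for $r = 1$ its double loop space is already contractible. Consequently $\Omega^2 (S^{2r-1})\pcom$ is local, and looping the fibration twice produces
\[
\Omega^2 (S^{2r-1})\pcom \rightarrow \Omega^2 E_{r-1} \rightarrow \Omega^2 E_r
\]
with local fibre and, by the inductive hypothesis, local base. Closure under fibrations then shows that $\Omega^2 E_{r-1}$ is local, completing the induction, and Proposition~\ref{prop general nonconnectedsplitting} delivers the splitting for every $0 \leq r \leq n$.

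The main obstacle here is conceptual rather than computational: one must be confident of the two closure properties of $B\Z/p$-locality (under loops and under fibrations) and must check carefully that Miller's theorem applies to the fibres, that is, that the spheres are taken $p$-complete and are simply connected and $\F_p$-finite, with the degenerate case $r = 1$ treated separately. Once these foundational points are in place, the inductive mechanism is entirely formal.
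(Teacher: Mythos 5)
Your proof is correct, but it takes a genuinely different route from the paper's. The paper proceeds without any induction and without verifying the hypothesis of Proposition~\ref{prop general nonconnectedsplitting}: it uses the single fibration $H \rightarrow E_r \rightarrow \widetilde{BN}$, where $H$, the homotopy fibre of $BU(r)\pcom \rightarrow BU(n)\pcom$, has finite mod~$p$ cohomology. Applying $\map(B\Z/p,-)_c$ and comparing with this fibration through the evaluation maps, Miller's theorem makes the evaluation on $H$ an equivalence, so the square comparing the two fibrations is a homotopy pullback; since the evaluation $\map(B\Z/p,\widetilde{BN})_c \rightarrow \widetilde{BN}$ is a trivial fibration by Corollary~\ref{prop nonconnectedsplitting}, so is its pullback $\map(B\Z/p,E_r)_c \rightarrow E_r$, which is the desired splitting for all $r$ at once. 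You instead propagate the $B\Z/p$-locality of $\Omega^2 E_r$ down the spherical tower, using closure of $B\Z/p$-local spaces under loops and under fibrations \cite{Dror}, and then quote Proposition~\ref{prop general nonconnectedsplitting}. Both arguments rest on the same two pillars, Miller's theorem \cite{Miller} for a mod~$p$ finite fibre and the already-settled case $E_n = \widetilde{BN}$, so neither is more elementary; what yours buys is modularity and the reusable intermediate fact that every $\Omega^2 E_r$ is $B\Z/p$-local, at the price of two closure lemmas (where the component-level point that all fibres over components in the image of $\Omega^2 E_{r-1} \rightarrow \Omega^2 E_r$ are local, via the $H$-space translations, deserves a word) and a separate case $r=1$; what the paper's buys is brevity, one pullback square replacing the induction. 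Note finally that the tower is inessential to your own strategy: applying your two closure properties once to $H \rightarrow E_r \rightarrow \widetilde{BN}$, where $\Omega^2 H$ is local because $H$ is mod~$p$ finite (the same appeal to Miller's theorem you make for the spheres), gives the locality of $\Omega^2 E_r$ directly and makes your argument run exactly parallel to the paper's.
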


\begin{proof}
Let us denote by $ev_r\colon \map(B\Z/p, E_r)_c\rightarrow E_r$
the evaluation at the component of the constant map and consider
the following commutative square of horizontal fibrations of
connected spaces
\[
\diagram \map(B\Z/p, H)_c \rto \dto^{ev}_{\simeq} &
\map(B\Z/p,E_{r})_c \dto^{ev_{r}} \rto & \map(B\Z/p,
\widetilde{BN})_c \dto^{ev_n} \cr H \rto & E_{r} \rto &
\widetilde{BN}
\enddiagram
\]
The homotopy fiber $H$ has finite cohomology, and is thus
equivalent via the evaluation map to the homotopy fiber
$\map(B\Z/p, H)_c$ of the top map. This shows that the right hand
square is a pull-back square. But $ev_n$ is a trivial fibration by
Proposition~\ref{prop nonconnectedsplitting}. Hence so is $ev_r$
and $\map(B\Z/p, E_r)_c$ must split as a product $E_r \times
\map_*(B\Z/p, E_r)_c$.
\end{proof}

\begin{proposition}
\label{T}
The morphism of $\A_p$-modules $\bar TQH^*(\widetilde{BN}; \F_p) \rightarrow
\bar TQH^*(E_r; \F_p)$ induced by $E_{r}\rightarrow \widetilde{BN}$ is an
isomorphism.
\end{proposition}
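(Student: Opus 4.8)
The plan is to reduce the statement to a computation of pointed mapping spaces and then convert it back through the geometric interpretation of the reduced $T$-functor obtained in Theorem~\ref{thm TQ}. First I would record that the composite $E_r \rightarrow E_{r+1} \rightarrow \cdots \rightarrow E_n = \widetilde{BN}$ is a fibration whose homotopy fibre $H$ is built out of the $p$-completed odd spheres $(S^{2j-1})\pcom$ for $r < j \leq n$; equivalently $H$ is the homotopy fibre of $BU(r)\pcom \rightarrow BU(n)\pcom$, so that $H^*(H; \F_p)$ is finite. This is exactly the space $H$ appearing in the proof of Proposition~\ref{inductionev}.

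Next I would exploit Miller's theorem, \cite{Miller}: since $H$ is mod $p$ finite, the pointed mapping space $\map_*(B\Z/p, H)_c$ is contractible (this is the same input that makes the left vertical evaluation an equivalence in Proposition~\ref{inductionev}). Applying $\map_*(B\Z/p, -)$ to the fibration $H \rightarrow E_r \rightarrow \widetilde{BN}$ produces a fibration sequence
$$
\map_*(B\Z/p, H)_c \rightarrow \map_*(B\Z/p, E_r)_c \rightarrow \map_*(B\Z/p, \widetilde{BN})_c
$$
with contractible fibre. Hence the map $E_r \rightarrow \widetilde{BN}$ induces a weak equivalence $\map_*(B\Z/p, E_r)_c \rightarrow \map_*(B\Z/p, \widetilde{BN})_c$ on the components of the constant map.

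Finally I would translate this equivalence into the algebraic statement. By Proposition~\ref{inductionev} the splitting $\map(B\Z/p, E_r)_c \simeq E_r \times \map_*(B\Z/p, E_r)_c$ holds for every $r$, and together with the finiteness of $H^*(E_r; \F_p)$ and of $H^*(\map(B\Z/p, E_r)_c; \F_p)$ this is precisely the setting of Theorem~\ref{thm TQ}. It therefore yields natural isomorphisms $\bar T QH^*(E_r; \F_p) \cong QH^*(\map_*(B\Z/p, E_r)_c; \F_p)$ and likewise for $\widetilde{BN} = E_n$, compatible with the map $E_r \rightarrow \widetilde{BN}$. The weak equivalence of pointed mapping spaces then delivers the desired isomorphism $\bar T QH^*(\widetilde{BN}; \F_p) \cong \bar T QH^*(E_r; \F_p)$.

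The main obstacle I expect lies not in the topology but in the bookkeeping needed to apply Theorem~\ref{thm TQ} uniformly to all the intermediate spaces $E_r$. One must verify that each $E_r$ is $p$-complete with finite-type cohomology and that $\Omega^2 E_r$ is $B\Z/p$-local. The first two properties follow from the fibre lemma together with the finiteness of $H$ and the fact that $\widetilde{BN}$ is a $p$-Noetherian group; for the locality I would use that $B\Z/p$-local spaces are closed under homotopy limits, so that $\Omega^2 E_r$, sitting in a fibration between the $B\Z/p$-local spaces $\Omega^2 H$ and $\Omega^2 \widetilde{BN}$, is itself $B\Z/p$-local. Note that the equivalence of pointed mapping spaces in the second paragraph is purely topological and independent of Theorem~\ref{thm TQ}, so no circularity arises in using it to identify $\map_*(B\Z/p, E_r)_c$ and thereby check the finiteness hypotheses.
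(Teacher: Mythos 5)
Your proof is correct and follows essentially the same route as the paper: the Miller-theorem argument on the mod $p$ finite fibre $H$ giving the equivalence $\map_*(B\Z/p,E_r)_c \simeq \map_*(B\Z/p,\widetilde{BN})_c$ is exactly what the paper extracts from Proposition~\ref{inductionev}, and the translation into the statement about $\bar T Q$ is exactly the paper's appeal to the argument of Theorem~\ref{thm TQforpNoetherian} (i.e.\ Theorem~\ref{thm TQ}). The only difference is bookkeeping: the paper reruns the proof of Theorem~\ref{thm TQ} using the splitting of Proposition~\ref{inductionev} directly, whereas you invoke Theorem~\ref{thm TQ} as a black box and therefore check its hypotheses ($p$-completeness, finite type, and $B\Z/p$-locality of $\Omega^2 E_r$) explicitly, which you do correctly.
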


\begin{proof}
We have seen in the previous proposition that the map
$E_{r}\rightarrow BX$ induces a weak equivalence on the connected
component of the constant map in the pointed mapping space
$\map_*(B\Z/p, -)_c$. From this point on, the same argument as in
the proof of Theorem~\ref{thm TQforpNoetherian} goes through.
\end{proof}

\begin{proof}[Proof of Theorem~\ref{theorem BX and Ap}]
We know that $H^*(E_0; \F_p)$ is finitely generated as an algebra
over the Steenrod algebra. It is thus sufficient to prove that
$H^*(E_{r}; \F_p)$ is a finitely generated $\A_p$-algebra if so is
$H^*(E_{r-1}; \F_p)$. Denote by $q_r$ the map $E_{r-1} \rightarrow
E_r$ turned into a fibration. Since $\ker q_r^*$ is generated by a
single element, namely the Euler class,
Proposition~\ref{fibration-local-fibre} applies.
\end{proof}

\begin{remark}
\label{rem CCS5}
{\rm In general, an unstable algebra which is finitely generated
as an algebra over $\A_p$ may contain unstable subalgebras which
are not finitely generated over~$\A_p$. Such an example appears in
\cite[Remark~2.2]{CCS5} as an unstable subalgebra $B$ of
$H^*(BS^1\times S^2; \F_p)\cong \F_p[x]\otimes E(y)$, which is not
a finitely generated $B$-module. $B$ is the ideal generated by $y$
turned into an unstable algebra by adding~$1$. The quotient is a
polynomial algebra, in particular it is not finite.}
\end{remark}

\begin{remark}
\label{rem PeterWeyl}
{\rm To prove Theorem~\ref{theorem BX and Ap} one could also use
the fact that $p$-compact groups satisfy the Peter-Weyl theorem
(see \cite[Theorem 1.6]{classificationp} and \cite[Remark
7.3]{classification2}). This would slightly shorten the proof and
avoid the use of the maximal $p$-compact toral subgroup. But the
Peter-Weyl theorem for $p$-compact groups is proved using the
classification of $p$-compact groups and we want to emphasize that
Theorem~\ref{theorem BX and Ap} does not depend on the
classification.}
\end{remark}

\begin{corollary}
\label{cor 4connectedcover}
Let $G$ be a simply connected, simple, compact Lie group. Then
$H^*(BG\langle 4\rangle; \F_p)$ is finitely generated as an algebra
over the Steenrod algebra and $QH^*(BG\langle 4\rangle; \F_p)$
belongs to $\U_1$. In fact there exists a morphism $QH^*(BG\langle
4\rangle; \F_p) \rightarrow F(1) \otimes Q_1$ with finite kernel and
cokernel.
\end{corollary}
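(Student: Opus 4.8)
The plan is to recognize $BG\langle 4\rangle$ as a $3$-connected $p$-Noetherian group and then assemble the three main results already established. First I would note that since $G$ is simple and simply connected, $\pi_4(BG) \cong \pi_3(G) \cong \Z$, so the $4$-connected cover sits in a fibration $K(\Z, 3) \rightarrow BG\langle 4\rangle \rightarrow BG$; by Example~\ref{Harada-Kono} together with Corollary~\ref{4connectedBX}, its $p$-completion is a $p$-Noetherian group. The first two assertions are then immediate: finite generation of $H^*(BG\langle 4\rangle; \F_p)$ as an algebra over $\A_p$ is exactly Theorem~\ref{theorem BX and Ap}, and the membership of $QH^*(BG\langle 4\rangle; \F_p)$ in $\U_1$ is Theorem~\ref{thm TQforpNoetherian}.

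For the refined statement I would apply Proposition~\ref{prop schwartz} with $Z = BG\langle 4\rangle$, obtaining a morphism $QH^*(BG\langle 4\rangle; \F_p) \rightarrow F(1) \otimes (Q_1^{\oplus k})$ with finite cokernel and locally finite kernel. The crux is to identify the integer $k$, which is the rank of the elementary abelian group $W = \Hom(\Z/p, P)$ arising in the structure fibration $K(P, 2)^\wedge_p \rightarrow (BG\langle 4\rangle)^\wedge_p \rightarrow BY$ of Theorem~\ref{structure of BX}. Since $BG\langle 4\rangle$ is $4$-connected, Corollary~\ref{4connectedBX} forces $P$ to be divisible, and comparing the fibre $K(P, 2)^\wedge_p \simeq K(\Z^\wedge_p, 3)$ with the computation $\pi_4(BG) \cong \Z$ pins down $P \cong \Z_{p^\infty}$. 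Consequently $W \cong \Z/p$ and $k = 1$, which is precisely where the simplicity hypothesis on $G$ enters, as it guarantees the single summand in the target.

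Finally I would upgrade the kernel from locally finite to finite. By Theorem~\ref{theorem BX and Ap} the module $QH^*(BG\langle 4\rangle; \F_p)$ is finitely generated as an $\A_p$-module, so by the local Noetherianity of the category $\U$ (\cite[Theorem~1.8.1]{MR95d:55017}) any submodule is again finitely generated; in particular the kernel of the morphism above is finitely generated. Being simultaneously finitely generated and locally finite, this kernel must be finite, which completes the proof.

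I expect the only genuine step to be the identification $k = 1$, which rests on the divisibility of $P$ supplied by Corollary~\ref{4connectedBX} together with the explicit homotopy group $\pi_4(BG) \cong \Z$ for a simple simply connected $G$; the remaining assertions are a direct assembly of the machinery developed in the previous sections, with the local Noetherianity argument handling the passage from locally finite to finite kernel.
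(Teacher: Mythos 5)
Your proof is correct and takes essentially the same route as the paper: both deduce the first two claims from Theorems~\ref{theorem BX and Ap} and~\ref{thm TQforpNoetherian}, apply Proposition~\ref{prop schwartz} and identify $k=1$ from the fact that $BG$ is $3$-connected with $\pi_4(BG)\cong\Z$, and then upgrade the locally finite kernel to a finite one using that $QH^*(BG\langle 4\rangle;\F_p)$ is a finitely generated $\A_p$-module. Your explicit identification of $P\cong\Z_{p^\infty}$ through the structure fibration and your spelled-out appeal to local Noetherianity of $\U$ are simply more detailed versions of steps the paper leaves implicit.
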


\begin{proof}
Recall that $Q_1 = QH^*(B\Z/p; \F_p)$. Since $BG$ is $3$-connected
and $\pi_4 BG \cong \Z$, Proposition~\ref{prop schwartz} yields a a
morphism $QH^*(BG\langle 4\rangle; \F_p) \rightarrow F(1) \otimes
Q_1$ (the elementary abelian group $W$ appearing there has rank
one). The cokernel is finite and the kernel locally finite. But
since $QH^*(BG\langle 4\rangle; \F_p)$ is a finitely generated
module over $\A_p$, the kernel must be finite.
\end{proof}

\begin{example}
\label{more than HK}
{\rm Let $X$ be a either any simply connected compact Lie group
(such as $Spin(10)$, which is one of the smallest examples Harada
and Kono could not handle at the prime $2$) or one of the
$p$-compact groups number $2b$, $23$ or $30$ in the Shephard-Todd
list. For odd primes, the exotic $p$-compact groups arising from
this construction are again non-modular. Hence, in all the cases,
$H^*(X; \Z\pcom)$ is torsion free. The mod $p$ cohomology is given
by $H^*(BX_{23}; \F_p)=\F_p[x_4,x_{12},x_{20}]$, $H^*(BX_{30};
\F_p)=\F_p[x_4,x_{24},x_{40},x_{60}]$, and $H^*(BX_{2b,m};
\F_p)=\F_p[x_4,x_{2m}]$. Since all examples are torsion free, the
same techniques used by Harada and Kono in \cite{MR896947} and
\cite{MR750280} show that $H^*(BX\langle4\rangle; \F_p) \cong
H^*(BX; \F_p)/J\otimes R_h$ where $J$ is the ideal generated by
$x_4,\P1 x_4,\ldots,\P{h} x_4$ for a certain $h$, and $R_h$ is an
unstable subalgebra of $H^*(K(\Z\pcom,3); \F_p)$ which is finitely
generated over~$\A_p$. In fact Theorems~\ref{theorem BX and Ap} and
\ref{thm TQforpNoetherian} show directly that $QH^*(BX \langle 4
\rangle; \F_p)$ is finitely generated as a module over $\A_p$ and
belongs to $\U_1$. From the last corollary we see for example that
$QH^*(BSpin(10)\langle 4\rangle; \F_2)$ differs from $\Sigma F(1)$
in only a finite number of ``low dimensional" classes.}
\end{example}

Our techniques also allow us to say something about the cohomology
of some higher connected covers.

\begin{example}
\label{exotic:allothers}
{\rm Let $X$ be any exotic $p$-compact groups except those
corresponding to number $2b$, $23$ or $30$ in the Shephard-Todd
list. They are $(n-1)$-connected for some integer $n >4$ (for
example at the prime $2$ the only exotic example is $BDI(4)$,
which is $7$-connected). Consider the $n$-connected cover of its
classifying space, $(BX)\langle n \rangle$. Theorem~\ref{theorem
BX and Ap} implies then that $QH^*((BX) \langle n \rangle; \F_p)$
is finitely generated as a module over $\A_p$. It belongs in fact
to $\U_{n-2}$, compare with Corollary~\ref{cor finitecover}. For
example $QH^*((BDI(4))\langle 8 \rangle; \F_2)$ belongs to
$\U_6$.}
\end{example}

\appendix
\section{Modules and algebras} \label{appendix}
Let us consider an unstable algebra $B^*$. Assume there is another
unstable algebra $G^*$ which is finitely generated as an algebra
over~$\A_p$ and which acts on $B^*$. Assume also that $B^*$ is
finitely generated as a module over $G^*$, when can we conclude that
$B^*$ is finitely generated as an algebra over~$\A_p$? If the action
of $G^*$ on $B^*$ is compatible with the action of the Steenrod
algebra, it is obvious, but it is not true in general as illustrated
by the following example. Set $G^* = H^*(K(\Z/2, 2); \F_2)$ and let
$B^*$ be isomorphic to $G^*$ as an algebra, but define the action of
$\A_2$ to be trivial. Then $B^*$ is finitely generated as a
$G^*$-module, but not as an $\A_2$-algebra. We propose a weak notion
of compatibility between the $G^*$-module and $\A_p$-algebra
structures.

\begin{proposition}
\label{algebraprop1}
Let $G^*$ be a connected and commutative finitely generated
$\A_p$-algebra, and let $B^*$ be a connected commutative
$G^*$-algebra which is a finitely generated as a $G^*$-module.
Assume there exists a morphism $p\colon B^*\rightarrow G^*$ of
$G^*$-modules such that $p(\theta(g\cdot 1))=\theta g$ for all
$\theta \in \A_p$ and $g\in G^*$. Then $B^*$ is also a finitely
generated $\A_p$-algebra.
\end{proposition}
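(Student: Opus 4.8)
The plan is to exhibit an explicit finite generating set and to show it generates $B^*$ as an $\A_p$-algebra by an induction on degree, using the retraction $p$ to control the failure of the structure map to be Steenrod-linear. Write $\eta\colon G^*\rightarrow B^*$ for the structure map of the $G^*$-algebra, so that $g\cdot 1=\eta(g)$ and $\eta$ is a map of commutative algebras. Taking $\theta=1$ in the hypothesis gives $p\circ\eta=\mathrm{id}_{G^*}$, so $\eta$ is a split monomorphism of $G^*$-modules and $B^*\cong\eta(G^*)\oplus\ker p$ as $G^*$-modules. As a direct summand of the finitely generated $G^*$-module $B^*$, the submodule $\ker p$ is finitely generated, by homogeneous elements $c_1,\dots,c_l$; since $\ker p$ vanishes in degree $0$ (where $p$ is an isomorphism), the $c_j$ have positive degree, and $\{1,c_1,\dots,c_l\}$ generates $B^*$ as a $G^*$-module. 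Finally fix $\A_p$-algebra generators $x_1,\dots,x_m$ of $G^*$. I claim that the finite set $S=\{\eta(x_1),\dots,\eta(x_m),c_1,\dots,c_l\}$ generates $B^*$ as an $\A_p$-algebra; let $A^*\subseteq B^*$ denote the sub-$\A_p$-algebra it generates.

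Since $\{1,c_1,\dots,c_l\}$ generates $B^*$ over $G^*$, every element of $B^*$ has the form $\eta(g_0)+\sum_j\eta(g_j)c_j$ with $g_i\in G^*$. As the $c_j$ lie in $A^*$, it therefore suffices to prove that $\eta(G^*)\subseteq A^*$. Here lies the main obstacle: $\eta$ is multiplicative but \emph{not} $\A_p$-linear (this is exactly the phenomenon behind the counterexample preceding the statement), so one cannot simply build $\eta(\theta x_i)$ out of $\eta(x_i)$ by applying~$\theta$.

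To overcome this I would prove $\eta(g)\in A^*$ for all homogeneous $g$ by induction on $\deg g$, the case $\deg g=0$ being clear. Because $\eta$ is an algebra map, products of positive-degree elements are handled by the inductive hypothesis, so it is enough to treat the indecomposables of $G^*$; as the $x_i$ generate $G^*$ as an $\A_p$-algebra, these are spanned by elements of the form $\theta x_i$. The key computation is that the defect $e=\theta\,\eta(x_i)-\eta(\theta x_i)$ satisfies $p(e)=0$: by the compatibility hypothesis $p(\theta\,\eta(x_i))=\theta x_i$, while $p(\eta(\theta x_i))=\theta x_i$ since $p\circ\eta=\mathrm{id}$. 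Hence $e\in\ker p=\sum_j G^* c_j$, so $e=\sum_j\eta(h_j)c_j$ with $\deg h_j=\deg(\theta x_i)-\deg c_j<\deg(\theta x_i)$ because each $c_j$ has positive degree. By the inductive hypothesis $\eta(h_j)\in A^*$, and since $c_j\in A^*$ we get $e\in A^*$; as $\theta\,\eta(x_i)\in A^*$ as well, we conclude $\eta(\theta x_i)=\theta\,\eta(x_i)-e\in A^*$. This closes the induction, gives $\eta(G^*)\subseteq A^*$, and hence $A^*=B^*$, so $B^*$ is finitely generated as an $\A_p$-algebra.

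The crux of the argument is thus the interplay in the last two paragraphs: the structure map $\eta$ fails to commute with Steenrod operations, but the retraction $p$ forces the defect $e$ to land in $\ker p$, which is generated in \emph{strictly lower} degree once one knows $\ker p$ is finitely generated by positive-degree elements. This is precisely where the compatibility hypothesis $p(\theta(g\cdot 1))=\theta g$ does its work, and graded connectivity guarantees that the induction terminates. I would expect the verification that $\{1,c_1,\dots,c_l\}$ generates over $G^*$ (graded Nakayama) and the bookkeeping of decomposables to be the only routine points to spell out.
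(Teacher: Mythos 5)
Your proof is correct and follows essentially the same route as the paper's: the same generating set ($\A_p$-algebra generators of $G^*$ pushed into $B^*$ via the unit, together with positive-degree module generators lying in $\ker p$), the same key computation that the defect $\theta\,\eta(x_i)-\eta(\theta x_i)$ lies in $\ker p$ by the compatibility hypothesis and $G^*$-linearity of $p$, and the same induction on degree made to terminate by the positivity of the degrees of those generators. The only cosmetic difference is that you produce the generators of $\ker p$ from the explicit splitting $B^*\cong\eta(G^*)\oplus\ker p$, whereas the paper normalizes its module generators $b_i$ of $B^*$ by replacing them with $b_i-p(b_i)\cdot 1$ and then checks, by applying $p$, that the coefficient of $1$ in the expansion of the defect vanishes.
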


\begin{proof}
Since $B^*$ is finitely generated as a $G^*$-module, let us choose
a finite set of generators $b_1=1,\ldots, b_n$, where the degree
of $b_i$ is positive for $i \geq 2$ by the connectivity
assumption. In fact we will assume that $b_i$ belongs to the
kernel of $p$ for $i \geq 2$: replace $b_i$ by $b_i - p(b_i)\cdot
1$ if necessary.

Let $x$ be an element in $B^*$, so $x$ can be written as a
$G^*$-linear combination $x=\sum_{i=1}^n \lambda_i \cdot b_i$. If
$\{z_1,\ldots, z_m\}$ denote the $\A_p$-algebra generators of
$G^*$, then each $\lambda_i$ can be expressed as a polynomial on
Steenrod operations $\theta_i$ applied to the generators,
$\theta_i z_i$ (use the Cartan formula). Our claim is that $b_1,
\dots , b_n$ and $z_1\cdot 1, \dots , z_m \cdot 1$ generate $B^*$
as an algebra over $\A_p$. Since $B^*$ is a unital $G^*$-algebra,
we need to prove that an element of the form $\theta z \cdot 1$,
for $\theta \in \A_p$ and $z \in \{z_1, \dots, z_m \}$, can be
written as a polynomial in the $\theta (z_i \cdot 1)$'s and the
$b_k$'s.

If the action of the Steenrod algebra were compatible with the
module action, one would have that $\theta z \cdot 1 = \theta (z
\cdot 1)$. This is not the case, but it is sufficient to deal with
the element $\xi= \theta z \cdot 1 - \theta (z \cdot 1)$. Note
that $\xi \in \ker p$ since $p(\theta z \cdot 1)=\theta z$. We
will proceed by induction on the degree. If $\xi$ is in degree
zero, then the statement is clear since the algebras are
connected. Assume that the statement is true for degrees $<|\xi|$
and write $\xi = \lambda_1 \cdot 1 + \lambda_2 \cdot b_2 + \dots
\lambda_n \cdot b_n$. By the induction hypothesis we know that the
elements $\lambda_i \cdot 1$ can be expressed as polynomials in
the $\theta (z_i \cdot 1)$'s and the $b_k$'s. So we only need to
deal with $\lambda_1 \cdot 1$. But $\lambda_1 = p(\lambda_1 \cdot
1) = p(\xi - \lambda_2 \cdot b_2 - \dots - \lambda_n \cdot b_n)$
which is zero because $p$ is a morphism of $G^*$-modules and $b_i
\in \ker p$ for $i \geq 2$. This concludes the proof.
\end{proof}

In Section~\ref{sec Serre} we need a slight generalization of this
proposition.

\begin{proposition}
\label{algebraprop}
Let $G^*$ be a connected and commutative finitely generated
$\A_p$-algebra, $C^*$ be a connected and finitely generated
algebra, and let $B^*$ be a connected commutative $G^* \otimes
C^*$-algebra which is a finitely generated as a $G^* \otimes
C^*$-module. Assume there exists a morphism $p\colon
B^*\rightarrow G^*$ of $G^*$-modules such that $p(\theta(g\cdot
1))=\theta g$ for all $\theta \in \A_p$ and $g\in G^*$. Then $B^*$
is also a finitely generated $\A_p$-algebra.
\end{proposition}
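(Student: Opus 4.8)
The plan is to reduce Proposition~\ref{algebraprop} to the already-established Proposition~\ref{algebraprop1} by absorbing the extra factor $C^*$ into the role played by the ground field there. The hypotheses differ from those of Proposition~\ref{algebraprop1} only in that $B^*$ is a module over $G^* \otimes C^*$ rather than over $G^*$ alone, while the compatibility morphism $p\colon B^* \rightarrow G^*$ is still only required to be $G^*$-linear and to satisfy $p(\theta(g\cdot 1)) = \theta g$. Since $C^*$ is connected and finitely generated as an algebra, I expect it to behave, for the purposes of the generation argument, like a finite collection of extra ``coefficients'' that can be pushed into the generating set.

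First I would choose a finite set $c_1 = 1, c_2, \dots, c_s$ of algebra generators of $C^*$, together with a finite set $b_1 = 1, b_2, \dots, b_n$ of $G^* \otimes C^*$-module generators of $B^*$, again arranging (by subtracting $p(b_i)\cdot 1$) that $b_i \in \ker p$ for $i \geq 2$. The key observation is that $B^*$ is then finitely generated as a $G^*$-module by the finite set of products $\{\, c_j \cdot b_i \,\}$, since any element can first be written as a $G^* \otimes C^*$-combination of the $b_i$ and then each $C^*$-coefficient expanded as a polynomial in the $c_j$ with $G^*$ (indeed $\F_p$) coefficients. Thus $B^*$ satisfies the module-finiteness hypothesis of Proposition~\ref{algebraprop1} over $G^*$ alone, with enlarged generating set $\{\, c_j \cdot b_i \,\}$.

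With this reduction in place, I would run essentially the same induction on degree as in the proof of Proposition~\ref{algebraprop1}. The morphism $p$ is still $G^*$-linear and still satisfies $p(\theta(g\cdot 1)) = \theta g$, so the element $\xi = \theta z \cdot 1 - \theta(z\cdot 1)$ again lies in $\ker p$ for any $\A_p$-generator $z$ of $G^*$. Writing $\xi$ as a $G^*$-combination of the new generators $c_j \cdot b_i$ and applying the induction hypothesis degree by degree, the only coefficient that needs separate treatment is the one attached to $c_1 \cdot b_1 = 1$, and that coefficient vanishes precisely because $p$ kills $\xi$, the $b_i$ for $i \geq 2$, and hence every $c_j \cdot b_i$ with $(i,j) \neq (1,1)$ (using that $p$ is $G^*$-linear). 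Therefore $B^*$ is generated as an $\A_p$-algebra by the $c_j \cdot b_i$ together with the elements $z_k \cdot 1$.

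The main obstacle, and the point requiring care, is verifying that $p$ annihilates the enlarged generators $c_j \cdot b_i$ for $(i,j) \neq (1,1)$: for $i \geq 2$ this is immediate since $b_i \in \ker p$ and $p$ is $G^*$-linear, but I must confirm that $p(c_j \cdot 1) = 0$ for $j \geq 2$, i.e.\ that the positive-degree part of $C^* \cdot 1$ maps to zero under $p$. This does \emph{not} follow automatically from $G^*$-linearity, since $C^*$ is not part of the module structure $p$ is linear over; however one can normalize by replacing each $c_j \cdot 1$ with $c_j \cdot 1 - p(c_j \cdot 1)\cdot 1$, exactly as was done for the $b_i$, which lands these generators in $\ker p$ without changing the subalgebra they generate together with $G^* \cdot 1$. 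Once this normalization is recorded, the induction closes verbatim as in Proposition~\ref{algebraprop1}.
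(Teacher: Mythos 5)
Your reduction to Proposition~\ref{algebraprop1} fails at its first, crucial step. You claim that $B^*$ is finitely generated as a $G^*$-module by the finite set $\{c_j\cdot b_i\}$, because every $C^*$-coefficient ``can be expanded as a polynomial in the $c_j$ with $\F_p$ coefficients''. But a polynomial in the $c_j$'s is not an $\F_p$-linear combination of the $c_j$'s: monomials $c^I$ of length two or more occur, and these are not in the $G^*$-span of $\{c_j\cdot b_i\}$. What your expansion actually shows is that $B^*$ is generated over $G^*$ by the \emph{infinite} family $\{c^I\cdot b_i\}$, with $I$ ranging over all monomials in the $c_j$'s. In fact $B^*$ need not be finitely generated as a $G^*$-module at all: take $G^*=\F_2$, $C^*=\F_2[c]$, and $B^*=H^*(\C P^\infty;\F_2)=\F_2[u]$ with $c$ acting as multiplication by $u$ and $p$ the augmentation. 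All hypotheses of Proposition~\ref{algebraprop} hold, yet $B^*$ is infinite dimensional over $G^*=\F_2$, so Proposition~\ref{algebraprop1} simply does not apply. This is precisely why the paper states Proposition~\ref{algebraprop} separately instead of quoting Proposition~\ref{algebraprop1}: the tensor factor $C^*$ contributes infinitely many \emph{module} generators, though only finitely many \emph{algebra} generators. Accordingly, the paper's proof does not try to shrink the module structure down to $G^*$; it keeps the expression $x=\sum_i\lambda_i\cdot b_i$ with $\lambda_i\in G^*\otimes C^*$, enlarges the set of candidate $\A_p$-algebra generators to $\{b_k\}\cup\{z_i\cdot 1\}\cup\{c_m\cdot 1\}$, and reruns the induction of Proposition~\ref{algebraprop1} to express each element $(\theta z)\cdot 1$ in terms of these.

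There is a second, independent gap in your inductive step. You assert that $p$ annihilates every generator $c_j\cdot b_i$ with $(i,j)\neq(1,1)$ ``using that $p$ is $G^*$-linear'', after normalizing the $c_j\cdot 1$ into $\ker p$. But $p$ is linear only over $G^*$; it is not assumed $C^*$-linear, nor multiplicative, so $\ker p$ is merely a $G^*$-submodule and not an ideal. From $p(b_i)=0$ and $p(\tilde c_j\cdot 1)=0$ you cannot conclude that $p(\tilde c_j\cdot b_i)=0$, nor that $p$ kills products of the normalized generators. This is exactly the delicate point of the generalization: when one applies $p$ to $\xi=(\theta z)\cdot 1-\theta(z\cdot 1)$ written in terms of the module generators, the coefficient of $1$ is no longer forced to vanish as it was in Proposition~\ref{algebraprop1}; instead one picks up terms of the form $\lambda\cdot p\bigl((c^I\cdot 1)b_k\bigr)$ with $\lambda\in G^*$, which require a genuine additional argument before the induction closes. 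Your normalization of the $c_j\cdot 1$ alone does not dispose of these terms, so the induction as you have set it up does not go through.
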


\begin{proof}
Just as in the previous proof, the only problem is to write an
element of the form $(\theta z) \cdot 1$, with $\theta \in \A_p$
and $z \in G^*$, in terms of the generators $z_i$, $b_k$, and
generators $c_m$ of the algebra $C^*$. The proof is then basically
the same.
\end{proof}

\bibliographystyle{amsplain}
\providecommand{\bysame}{\leavevmode\hbox
to3em{\hrulefill}\thinspace}
\providecommand{\MR}{\relax\ifhmode\unskip\space\fi MR }
\providecommand{\MRhref}[2]{%
  \href{http://www.ams.org/mathscinet-getitem?mr=#1}{#2}
} \providecommand{\href}[2]{#2}



\bigskip
{\small
\begin{center}
\begin{minipage}[t]{8 cm}
Nat\`{a}lia Castellana and Jer\^{o}me Scherer\\ Departament de Matem\`atiques,\\
Universitat Aut\`onoma de Barcelona,\\ E-08193 Bellaterra, Spain
\\\textit{E-mail:}\texttt{\,Natalia@mat.uab.es}, \\
\phantom{\textit{E-mail:}}\texttt{\,\,jscherer@mat.uab.es}
\end{minipage}
\begin{minipage}[t]{6 cm}
Juan A. Crespo \\ Departamento de Econom\'{\i}a Cuantitativa,
\\ Universidad Aut\'onoma de Madrid,\\ E-28049, Cantoblanco Madrid,
Spain
\\\textit{E-mail:}\texttt{\,juan.crespo@uam.es},
\end{minipage}
\end{center}}

\end{document}